\numberwithin{equation}{section}
\theoremstyle{plain}
\newtheorem{theorem}{Theorem}[section]
\newtheorem{lemma}[theorem]{Lemma}
\newtheorem{corollary}[theorem]{Corollary}
\theoremstyle{definition}
\newtheorem{definition}[theorem]{Definition}
\newtheorem{remark}[theorem]{Remark}
  \newcommand{\C}[1]{\mathcal{#1}}
\begin{document}
\title[Implicator groupoids]{On implicator groupoids}

\author[J. M. Cornejo]{Juan M. Cornejo }
\email{jmcornejo@uns.edu.ar}
\address{Departamento de Matem\'{a}tica\\
Universidad Nacional del Sur\\ INMABB \\ CONICET \\ Bahia Blanca\\Argentina}

\author[H. P. Sankappanavar]{Hanamantagouda P. Sankappanavar}
\email{sankapph@newpaltz.edu}
\address{Department of Mathematics\\
 State University of New York\\New Paltz\\NY 12561\\USA}
  
\thanks{The first author wants to thank the
institutional support of CONICET  (Consejo Nacional de Investigaciones 
Cient\'ificas y T\'ecnicas).}

\subjclass[2010]{Primary: 06D30; Secondary: 08B15, 20N02, 03G10}

\keywords{implicator groupoid, De Morgan algebra}

\begin{abstract}
In a paper published in 2012, the second author 
extended the well-known fact that Boolean algebras can be defined using only implication and a constant, to De Morgan 
algebras---this 
result led him to introduce, and investigate (in the same paper), the variety $\C{I}$ of algebras, there called implication zroupoids ({\bf{I}}-zroupoids) and here called implicator gruopids ($\C{I}$-groupoids), 
that generalize De Morgan algebras. 

	The present paper is a continuation of the 
paper mentioned above  	
	and is devoted to investigating  the structure of the lattice of subvarieties of $\C{I}$,
and also to making further contributions to the theory of implicator groupoids.  Several new subvarieties of $\C{I}$ are introduced and their relationship with each other, and with the subvarieties of $\C{I}$ which were already investigated in the paper mentioned above, are explored.
\end{abstract}
\maketitle

\allowdisplaybreaks 

\section{Introduction} 

Boolean algebras can be defined using only implication and a constant.   In 2012, this result was extended to De Morgan algebras in \cite{sankappanavarMorgan2012} which led the second author of this paper to introduce, and investigate, the variety  
$\C{I}$ of \emph{implicator groupoids} (there called implication zroupoids) that generalize De Morgan algebras.  

\begin{definition}\label{D:IG}
An algebra $\mathbf A = \langle A, \to, 0 \rangle$, where $\to$ is binary and $0$ is a constant,   
and $x'$ is defined by $x' : = x \to 0$, is called an 
 \emph{implicator groupoid} ($\C{I}$-groupoid, for short) if $\mathbf A$ satisfies:
\begin{enumerate} 
	\item[(I)] 	$(x \to y) \to z \approx [(z' \to x) \to (y \to z)']'$, 
	\item[(I$_{0}$)] \quad $ 0'' \approx 0$.
\end{enumerate}
\end{definition}

(The term ``implicator'' has been extensively used in Fuzzy logic and Fuzzy set theory; see for example \cite{DeVeCoGo13}.)

Throughout this paper $\C{I}$ denotes the variety of  \emph{implicator groupoids}.

The present paper is a continuation of \cite{sankappanavarMorgan2012} and is devoted to investigating the structure of the lattice of subvarieties of $\C{I}$, and also to making further contributions to the theory of implication groupoids. 
Several new subvarieties of $\C{I}$ are introduced and their relationship with each other and with the subvarieties of $\C{I}$, which were already investigated in \cite{sankappanavarMorgan2012}, are explored.
The paper is organized as follows.

\begin{definition}
The varieties $\C{MID}$, $\C{JID}$, and $\C{I}_{2,0}$ are defined relative to $\C{I}$, respectively, by:  
\begin{enumerate}
	\item[(MID)] \quad $x \land x \approx x$,
	\item[(JID)] \quad $x \lor x \approx x$,
	\item[(I$_{2,0}$)] \quad $x'' \approx x$.
\end{enumerate}
\end{definition}

In Section \ref{S:3}, we focus on the variety $\C{I}_{2,0}$
We present several fundamental identities that hold in $\C{I}_{2,0}$ and that play an essential role in the rest of the paper.  We prove that $\C{I}_{2,0} = \C{MID} = \C{JID}$.  We  also show that the   implicator groupoids which are implication algebras in the sense of Abbott are precisely Boolean algebras.

 In this paper we use the characterizations of De Morgan algebras, Kleene algebras and Boolean algebras obtained in \cite{sankappanavarMorgan2012} as definitions.

\begin{definition} \label{definit_DM_KL_BA}
 $\mathbf A \in \C{I}$ is a \emph{De Morgan algebra} ($\mathbf{DM}$-algebra for short) if $\mathbf A$ satisfies the axiom:
\begin{enumerate}  
	\item[(DM)] \quad $(x \to y) \to x \approx x$.
\end{enumerate}
 A $\mathbf{DM}$-algebra $\mathbf{A}$ is a {\it Kleene algebra} ($\mathbf{KL}$-algebra for short) if
$\mathbf A$ satisfies either of the equivalent axioms:
\begin{enumerate} 
	\item[(KL$_1$)] \quad $(x \to x) \to (y \to y)' \approx x \to x$,
	\item[(KL$_2$)] \quad $(y \to y) \to (x \to x) \approx x \to x$.
\end{enumerate}
 A $\mathbf{DM}$-algebra $\mathbf{A}$ is a {\it Boolean algebra} ($\mathbf{BA}$-algebra for short) if
$\mathbf A$ satisfies the axiom:
\begin{enumerate} 
	\item[(BA)] \quad $x \to x \approx 0'$.
\end{enumerate}
We denote by $\C{DM}$, $\C{KL}$, and $\C{BA}$, respectively, the varieties of $\mathbf{DM}$-algebras,
$\mathbf{KL}$-algebras, and $\mathbf{BA}$-algebras. 
\end{definition}

In Section \ref{S:4}, Boolean algebras and Kleene algebras are characterized as suitable subvarieties of $\C{I}_{2,0}$. 
In Section \ref{S:5}, we prove a Glivenko-like theorem for algebras in $\C{I}$.

Throughout this paper we use the following definitions:
\[
	\textup{(M)} \quad 	x \land y := (x \to y')' \quad\text{ and }\quad 
	\textup{(J)} \quad  x \lor y := (x' \land y')'.
\]
With each  $\mathbf A \in \C{I}$, we associate the following algebra:
\[
	\mathbf{A{^{mj}}} := \langle A, \land, \lor, 0 \rangle.  
\]

\begin{definition}
The varieties $\C{SCP}$ and $\C{MC}$ are defined relative to $\C{I}$, respectively, by:
\begin{enumerate}
	\item[(SCP)] \quad $x \to y \approx y' \to x'$,
	\item[(MC)] \quad $x \land y \approx y \land x$.
\end{enumerate}
\end{definition}

Section \ref{S:6} discusses the relationship between the varieties $\C{SCP}$ and $\C{MC}$.  It is shown that $\C{SCP} \subset \C{MC}$, and, moreover, if we restrict them to $\C{I}_{2,0}$, they coincide with each other. 
 
 Section \ref{S:7} determines the  algebras 
$\mathbf{A} \in \C{I}$ such that the induced algebra $\mathbf{A^{mj}} = \langle A, \land, \lor, 0 \rangle$ is a lattice.  Surprisingly, it turns out that this is the case precisely when the absorption identity holds, which, in turn, is equivalent to $\mathbf{A}$ being a De Morgan algebra.  The section ends with another characterization of De Morgan algebras as a subvariety of $\C{I}_{2,0}$.  In Section \ref{S:8}, we give several interesting properties of $\C{I}$ and of the variety $\C{I}_{2,0}\, \cap\,  \C{MC}$. 

\begin{definition}
The varieties $\C{Z}$, $\C{C}$,  $\C{A}$, $\C{I}_{3,1}$, $\C{I}_{1,0}$, and $\C{ID}$ are defined relative to $\C{I}$, respectively, by:
\begin{enumerate}
	\item[(Z)] \quad $x \to y \approx 0$,
	\item[(C)] \quad $ x  \to  y \approx y \to x$,
	\item[(CP)] \quad $ x  \to  y' \approx y \to x'$,
	\item[(A)] \quad $(x \to y) \to z \approx x \to (y \to z)$,
	\item[({I}$_{3,1}$)] \quad $x''' \approx x'$,
	\item[({I}$_{1,0}$)] \quad $x' \approx x$,
	\item[(ID)] \quad $x \to x \approx x$.
\end{enumerate}
\end{definition}

Section \ref{S:9} investigates relationships among the varieties $\C{Z}$, $\C{C}$,  $\C{A}$, and $\C{I}_{3,1}$.  It is proved that  $\C{Z} \subset \C{C} \subset \C{A} \subset \C{I}_{3,1}$. In Section \ref{S:10}, it is shown that $\C{I}_{1,0} = \C{ID}\, \cap\, \C{A}$.

In Section \ref{S:11}, the three $2$-element algebras $\mathbf{2}_z$, $\mathbf{2}_s$, and $\mathbf{2}_b$ of $\C{I}$ are recalled and the varieties they generate, denoted $\C{V}(\mathbf{2}_z)$, $\C{V}(\mathbf{2}_s)$,and $\C{V}(\mathbf{2}_b)$, respectively, are characterized.  As a consequence, the variety of join semilattices with least element, denoted $\C{SL}$, can be viewed as $\C{C} \cap \C{I}_{1,0} \subset \C{I}$, from which it follows, in view of a well-known result (see \cite{FrNa73}) that the congruence lattices of  algebras in $\C{I}$ do not satisfy any nontrivial lattice identities.

Section \ref{S:12} proves that  $\C{MC} \cap \C{MID}  \cap  \C{A} =  \C{SL}    \subset  \C{CP}$. In Section \ref{S:13}, we prove that $\C{MC}  \cap  \C{ID}  =  \C{C}  \cap   \C{I}_{1,0}= \C{V}(2_s)$.  The paper concludes with some remarks about results that are contained in
\cite{CoSa2015b}, \cite{CoSa2015c}, and 
\cite{CoSa2015a}.

\section{The variety $\C{I}_{2,0}$}\label{S:3}

In this section, we focus on the variety $\C{I}_{2,0}$, an important subvariety of $\C{I}$.  We present several fundamental identities that hold in $\C{I}_{2,0}$ and play an essential role in the rest of this paper. 
We also give two new characterizations of $\C{I}_{2,0}$ and show that the  algebras in $\C{I}$ that are implication algebras in the sense of Abbott are precisely Boolean algebras.

The following lemmas are frequently used in the sequel.
\begin{lemma}
[{\rm \cite[{\rm Theorem 8.15}]{sankappanavarMorgan2012}}]
\label{general_properties_equiv}
	Let  $\mathbf A \in \C{I}$. Then the following are equivalent in $\mathbf A$:
	\begin{enumerate} [{\rm (a)}]
		\item   $0' \to x \approx x$,  \label{TXX} 
		\item   $x'' \approx x$, 
		\item   $(x \to x')' \approx x$, \label{reflexivity} 
		\item   $x' \to x \approx x$. \label{LeftImplicationwithtilde}
	\end{enumerate}
\end{lemma}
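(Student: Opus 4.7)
The plan is to prove the four conditions pairwise equivalent by establishing a cycle of implications, using axiom (I) instantiated at carefully chosen triples $(x,y,z)$ together with (I$_0$), and the unconditional identities in $\C{I}$ that the earlier portion of \cite{sankappanavarMorgan2012} will have already developed. The central unconditional identity I would isolate first is
\[
x'' \approx (0' \to x)'',
\]
obtained by substituting $y := 0$, $z := 0$ in (I) and applying $0'' \approx 0$. This identity is the bridge between (a) and (b): under (b) both outer double primes can be stripped, leaving $0' \to x \approx x$, which is (a); under (a) one still has to cancel the outer double prime on the left, and this is where the real work lies.

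For (b) $\Rightarrow$ (a) the argument is immediate from the bridge. For (a) $\Rightarrow$ (b) I would combine the bridge with a second substitution into (I), for example $x := 0$, $y := 0$, $z := x$, which yields $0' \to x \approx \bigl[x'' \to (0 \to x)'\bigr]'$; applying (a) to the left-hand side, and then (a) again to inner subterms on the right (together with the unconditional identity $x''' \approx x'$, which follows easily from (I) and (I$_0$)), should collapse the expression to $x \approx x''$.

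For (b) $\Leftrightarrow$ (c), I would exploit that $(x \to x')' = x \land x$ in the paper's notation (M), and that (I) specialized at $y := x'$, $z := 0$ produces $(x \to x')' \approx \bigl[(0' \to x) \to x''\bigr]'$. Under (b), and using the bridge, the right-hand side becomes $[x \to x]'{}'$-type expression that reduces to $x$, giving (c). Conversely, (c) applied once to $x$ and once to $x'$, together with $x''' \approx x'$, forces $x'' \approx x$. For (c) $\Leftrightarrow$ (d) I would substitute $y := x'$, $z := x$ in (I) to put $(x \to x') \to x$ into a form featuring both $(x \to x')'$ and $x' \to x$, making each of (c), (d) transformable into the other.

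The main obstacle I expect is the direction (a) $\Rightarrow$ (b). The asymmetry between the hypothesis, which involves the specific element $0'$, and the conclusion, which is the purely internal involution law, means no single instantiation of (I) will suffice: one must find a substitution in which (a) can be used on several subterms simultaneously so that every compound expression on the right collapses to $x$ or $x''$. The unconditional identities $x''' \approx x'$ and $x'' \approx (0' \to x)''$ should be exactly the tools needed to effect that cancellation, with the remaining equivalences then being comparatively routine manipulations of (I).
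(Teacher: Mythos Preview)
The paper does not prove this lemma: it is quoted verbatim from \cite[Theorem~8.15]{sankappanavarMorgan2012}, so there is no in-paper proof to compare your attempt against. That said, your outline is broadly the right shape, and your bridge identity $x'' \approx (0' \to x)''$ is exactly the pivot between (a) and (b). One caution: you assert that $x''' \approx x'$ holds unconditionally in $\C{I}$, but the paper introduces the subvariety $\C{I}_{3,1}$ defined precisely by that identity, which strongly suggests it is \emph{not} a consequence of (I) and (I$_0$) alone. What \emph{is} unconditional is the weaker $x''' \to y \approx x' \to y$ (the paper's Lemma~\ref{SankaTheo7.6}), and from it the identity $[(x \to y) \to z]'' \approx (x \to y) \to z$ (proved later as Lemma~\ref{Lemma_300315_01}\,(\ref{060415_01}), using only (I) and Lemma~\ref{SankaTheo7.6}, hence logically available here).

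With that tool in hand, your hard direction (a)~$\Rightarrow$~(b) becomes a one-liner rather than the obstacle you anticipate: assuming (a), every $x$ equals $0' \to x = (0 \to 0) \to x$, which is of the form $(u \to v) \to w$ and therefore satisfies $x'' \approx x$ by the unconditional identity just mentioned. So your instinct to route through ``$x$ is already a compound term'' is correct; you just need the right unconditional fact to invoke, and it is $[(u \to v) \to w]'' \approx (u \to v) \to w$, not $x''' \approx x'$. The remaining equivalences (b)$\Leftrightarrow$(c) and (c)$\Leftrightarrow$(d) via your proposed substitutions in (I) are routine once (a)$\Leftrightarrow$(b) is in place.
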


\begin{lemma} \cite[Lemma 8.13 (e),(f)] {sankappanavarMorgan2012} \label{general_properties}
	Let $\mathbf A \in \C{I}_{2,0}$. Then $\mathbf A$ satisfies:
	\begin{enumerate}[{\rm(a)}]
		\item  $x' \to 0' \approx 0 \to x$, \label{cuasiConmutativeOfImplic2}
		\item  $0 \to x' \approx x \to 0'$. \label{cuasiConmutativeOfImplic}
	\end{enumerate}
\end{lemma}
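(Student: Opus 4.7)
The plan is to derive both identities directly from axiom (I) via a single judicious specialization, together with $x'' \approx x$ and the equivalences recorded in Lemma \ref{general_properties_equiv}.

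The key auxiliary fact I would establish first is $0 \to 0' \approx 0'$. Taking $x := 0'$ in Lemma \ref{general_properties_equiv}(d) gives $0'' \to 0' \approx 0'$, and since $0'' \approx 0$ this reads $0 \to 0' \approx 0'$. In particular $(0 \to 0')' \approx 0'' \approx 0$.

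To prove (a), I would specialize (I) to $y := 0$ and $z := 0'$. The left-hand side becomes $(x \to 0) \to 0' \approx x' \to 0'$. On the right-hand side, $z' = 0'' \approx 0$, while $(y \to z)' = (0 \to 0')' \approx 0$ by the preceding step, so the whole expression collapses to
\[
\bigl[(0 \to x) \to 0\bigr]' \approx (0 \to x)'' \approx 0 \to x,
\]
using $x'' \approx x$ at the last step. This yields (a). Identity (b) then follows at once by substituting $x'$ for $x$ in (a) and applying $x'' \approx x$: the left-hand side of the substituted identity is $(x')' \to 0' \approx x \to 0'$, and the right-hand side is $0 \to x'$.

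I do not anticipate any real obstacle. The only nonroutine moment is spotting that the specialization $y = 0$, $z = 0'$ in (I) simultaneously forces $z' \approx 0$ and $(y \to z)' \approx 0$, which is precisely what makes the right-hand side telescope to $(0 \to x)''$; everything else is bookkeeping with $x'' \approx x$.
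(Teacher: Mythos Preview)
Your argument is correct. The specialization $y:=0$, $z:=0'$ in (I) does exactly what you say: since $0''\approx 0$ and, by Lemma~\ref{general_properties_equiv}(d) with $x:=0'$, $(0\to 0')'\approx 0$, the right-hand side of (I) collapses to $(0\to x)''\approx 0\to x$, giving (a); and (b) then follows by substituting $x'$ for $x$ and using $x''\approx x$.

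There is nothing to compare against in the paper itself: this lemma is simply quoted from \cite[Lemma~8.13(e),(f)]{sankappanavarMorgan2012} and no proof is reproduced here. Your write-up therefore supplies a self-contained derivation that the paper omits, using only axiom (I), the hypothesis $x''\approx x$, and the already-available equivalences of Lemma~\ref{general_properties_equiv}.
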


\begin{lemma} \cite[Theorem 8.15]{sankappanavarMorgan2012} \label{SankaTheorem8.15}
Let  $\mathbf{A} \in \C{I}_{2,0}$.  Then $\mathbf{A} \models x \land x \approx x$.
\end{lemma}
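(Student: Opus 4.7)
The statement is essentially immediate once one unfolds the notation. My plan is to argue directly from Lemma~\ref{general_properties_equiv} together with the definition of $\land$.

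First, I would recall the definition~(M) of the derived meet operation: for any $\mathbf{A} \in \C{I}$ and $x \in A$,
\[
x \land x \;=\; (x \to x')'.
\]
So the claim $x \land x \approx x$ is just the identity $(x \to x')' \approx x$ in disguise.

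Next, I would invoke Lemma~\ref{general_properties_equiv}. That lemma asserts that, inside $\C{I}$, the identity $x'' \approx x$ is equivalent to $(x \to x')' \approx x$ (among other things; these are items (b) and (c)). Since $\mathbf{A} \in \C{I}_{2,0}$ means by definition that $\mathbf{A} \models x'' \approx x$, the equivalence gives $\mathbf{A} \models (x \to x')' \approx x$.

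Combining the two steps, $\mathbf{A} \models x \land x \approx (x \to x')' \approx x$, which is exactly the desired identity. There is essentially no obstacle here: the result is a one-line corollary of the earlier equivalence lemma, and the only thing to be careful about is remembering that $\land$ is a defined (not primitive) operation, so the proof consists of expanding its definition and quoting Lemma~\ref{general_properties_equiv}(c).
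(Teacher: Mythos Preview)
Your argument is correct: unfolding the definition~(M) gives $x\land x=(x\to x')'$, and Lemma~\ref{general_properties_equiv} (the equivalence of (b) and (c)) then converts the hypothesis $x''\approx x$ into $(x\to x')'\approx x$, which is exactly what is needed. The paper itself does not supply a proof here---it simply cites \cite[Theorem~8.15]{sankappanavarMorgan2012}, the same source from which Lemma~\ref{general_properties_equiv} is quoted---so your one-line derivation from that equivalence is entirely in keeping with how the result is used.
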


The following lemma is a restatement of \cite[Theorem 5.1]{sankappanavarMorgan2012}.

\begin{lemma} 
\label{SankaTheorem5.1}
Let  $\mathbf{A} \in \C{I}$.  Then 
\begin{center}
$\mathbf{A}$ is a De Morgan algebra iff  
$\mathbf{A} \models (0 \to x) \to y \approx y$.
\end{center}
\end{lemma}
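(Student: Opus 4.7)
The plan is to show that, modulo the axioms of $\C{I}$, each of the two identities in the claimed equivalence reduces via a single application of (I) to the auxiliary identity
\[
(*)\quad x \to (y \to x)' \approx x',
\]
so that the two directions share a common pivot.

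The common preliminary step for both directions is to reach $\C{I}_{2,0}$. If (DM) holds, then substituting $y := 0$ into (DM) gives $x' \to x \approx x$; if $(0 \to x) \to y \approx y$ holds, then substituting $x := 0$ gives $0' \to y \approx y$. In either case, Lemma \ref{general_properties_equiv} yields $x'' \approx x$, so $\mathbf{A} \in \C{I}_{2,0}$ and the identities of Lemma \ref{general_properties} are at our disposal throughout.

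For the forward direction, I would apply (I) with $z := x$:
\[
(x \to y) \to x \approx [(x' \to x) \to (y \to x)']' = [x \to (y \to x)']',
\]
using $x' \to x \approx x$ from Lemma \ref{general_properties_equiv}(d). Combining with (DM) and $x'' \approx x$ yields $(*)$. Next, apply (I) with $x := 0$ and simplify $v' \to 0 \approx v'' \approx v$:
\[
(0 \to u) \to v \approx [(v' \to 0) \to (u \to v)']' = [v \to (u \to v)']'.
\]
By $(*)$ with $(x,y) := (v,u)$, the inner bracket equals $v'$, so $(0 \to u) \to v \approx v'' \approx v$.

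For the converse, the second (I)-computation above already rewrites the hypothesis $(0 \to u) \to v \approx v$ as $(*)$ (apply $'$ to both sides and use $x'' \approx x$). The first (I)-computation then transforms $(x \to y) \to x$ into $[x \to (y \to x)']'$, which by $(*)$ collapses to $[x']' \approx x$, giving (DM). The main hurdle is spotting that two superficially different substitutions into (I)---one into the first argument of the outer $\to$, one into the third---both collapse, in $\C{I}_{2,0}$, to the single auxiliary identity $(*)$; once this symmetry is recognized, the proof is essentially a one-paragraph calculation, whereas without it one might fruitlessly try to manipulate $v \to (u \to v)'$ directly using (M) and the De Morgan-style semantics.
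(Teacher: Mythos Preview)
Your proof is correct. Both directions of your argument are sound: the preliminary reductions to $\C{I}_{2,0}$ via Lemma~\ref{general_properties_equiv} work as stated, and the two applications of (I)---one with $z:=x$, one with $x:=0$---do indeed both collapse (in $\C{I}_{2,0}$) to the pivot identity $x \to (y \to x)' \approx x'$, which makes the equivalence transparent. Incidentally, your first (I)-computation is exactly item~(\ref{291014_09}) of Lemma~\ref{general_properties2} in this paper.

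As for comparison: the paper does not give its own proof of this lemma at all; it is stated as a restatement of \cite[Theorem~5.1]{sankappanavarMorgan2012} and simply cited. So your argument is a self-contained proof where the paper relies on an external reference. Your route through the single auxiliary identity $(*)$ is clean and symmetric, and it has the virtue of making explicit why the two conditions are interchangeable---something the citation alone does not convey.
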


\begin{lemma} \cite[Lemma 7.5b]{sankappanavarMorgan2012} \label{SankaLemma7.5}
Let  $\mathbf{A} \in \C{I}_{2,0}$.  Then $\mathbf{A} \models (x \to y'')' \approx (x \to y)'$.
\end{lemma}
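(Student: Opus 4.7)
The statement is essentially immediate from the defining identity of $\C{I}_{2,0}$. Recall that $\C{I}_{2,0}$ is defined relative to $\C{I}$ by the identity (I$_{2,0}$): $x'' \approx x$. My plan is simply to substitute $y$ into this identity and then apply $\to$ and $'$ term-operations to both sides.

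More concretely, I would proceed as follows. First, since $\mathbf{A} \in \C{I}_{2,0}$, the identity $y'' \approx y$ holds in $\mathbf{A}$ for every element $y$. Second, applying the (unary) term operation $t(z) := (x \to z)'$ (with $x$ a fixed variable) to both sides of $y'' \approx y$, one obtains $(x \to y'')' \approx (x \to y)'$, which is exactly the claim.

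There is no real obstacle here: the result is a one-line consequence of the characterization of $\C{I}_{2,0}$, needing neither Lemma \ref{general_properties_equiv} nor Lemma \ref{general_properties} nor the axiom (I). In fact, the only reason to isolate it as a lemma is convenience: in subsequent arguments one frequently wants to replace $y''$ by $y$ inside an expression of the form $(x \to \underline{\quad})'$, and having this rewriting recorded as a named lemma will simplify proof bookkeeping. Thus the proof I would write is essentially: ``By (I$_{2,0}$), $y'' \approx y$, and hence $(x \to y'')' \approx (x \to y)'$.''
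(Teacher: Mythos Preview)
Your proof is correct. The paper does not give its own proof of this lemma; it simply cites \cite[Lemma 7.5b]{sankappanavarMorgan2012}. As stated here---for $\C{I}_{2,0}$, where $y'' \approx y$ holds by definition---the result is exactly the one-line substitution you describe, so there is nothing to compare.

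One minor remark: in the cited 2012 paper the corresponding result is likely proved in a setting where $x'' \approx x$ is \emph{not} assumed (note that the present paper later invokes Lemma~\ref{SankaLemma7.5} inside the proof of Lemma~\ref{Lemma_300315_01}, which is stated for arbitrary $\mathbf{A} \in \C{I}$). In that more general context the identity $(x \to y'')' \approx (x \to y)'$ requires genuine work using axiom~(I). But for the statement as it appears in \emph{this} paper, your argument is both correct and optimal.
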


\begin{lemma} \cite[Theorem 7.6]{sankappanavarMorgan2012} \label{SankaTheo7.6}
	Let  $\mathbf{A} \in \C{I}$.  Then $\mathbf{A} \models x''' \to y \approx x' \to y$.
\end{lemma}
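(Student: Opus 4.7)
The plan is to unfold both sides of $x''' \to y \approx x' \to y$ via the defining axiom (I) and then close the gap using auxiliary identities derivable from (I) and $(I_0)$.

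First, since $x' = x \to 0$ and $x''' = x'' \to 0$, two applications of (I)---with $(x,y,z) = (x,0,y)$ and with $(x,y,z) = (x'',0,y)$ respectively---yield
\[
x' \to y \approx \bigl[(y' \to x) \to (0 \to y)'\bigr]'
\quad\text{and}\quad
x''' \to y \approx \bigl[(y' \to x'') \to (0 \to y)'\bigr]'.
\]
The problem is thus reduced to proving
\[
(y' \to x) \to (0 \to y)' \approx (y' \to x'') \to (0 \to y)'.
\]

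Second, I would extract two auxiliary identities valid throughout $\C{I}$ from (I) and $(I_0)$. Setting $z = 0$ in (I) gives the rewrite rule
\[
(a \to b)' \approx \bigl[(0' \to a) \to b''\bigr]';
\]
setting $y = z = 0$ in (I) and invoking $(I_0)$ (so that $0'' \approx 0$) gives
\[
a'' \approx (0' \to a)''.
\]
The former standardizes single primes of implications, while the latter encodes the fact that the unary operator $0' \to (\cdot)$ is invisible through the double-prime.

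Third, to prove the reduced identity, I would apply (I) once more with $(x,y,z) = (y', x, (0 \to y)')$ and $(y', x'', (0 \to y)')$, which further reduces matters to showing $(x \to (0 \to y)')' \approx (x'' \to (0 \to y)')'$. Applying the first auxiliary identity then reshapes this into
\[
(0' \to x) \to (0 \to y)''' \approx (0' \to x'') \to (0 \to y)''' .
\]
At this point the second auxiliary identity becomes the workhorse: it lets one replace $x''$ by $(0' \to x)''$ and so match the two sides up to a common normal form, completing the argument. I expect the main obstacle to be precisely tracking how primes propagate under repeated applications of (I): care is needed so that the $x$-versus-$x''$ discrepancy is genuinely absorbed by $a'' \approx (0' \to a)''$ rather than being propagated ad infinitum, and one further intermediate identity (perhaps of the form $(0 \to x)' \approx (0 \to x'')'$ in $\C{I}$) may be needed along the way to close the loop.
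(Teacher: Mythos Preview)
The paper does not actually prove this lemma; it is quoted verbatim from \cite{sankappanavarMorgan2012} (their Theorem~7.6) and used as a black box throughout. So there is no in-paper argument to compare your proposal against.

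On the merits of your proposal itself: the opening reductions are correct. Expanding both sides via (I) gives the claimed reduction to $(y' \to x) \to (0 \to y)' \approx (y' \to x'') \to (0 \to y)'$, and a second application of (I) together with your auxiliary $(a \to b)' \approx [(0' \to a) \to b'']'$ further reduces it to $(0' \to x) \to (0 \to y)''' \approx (0' \to x'') \to (0 \to y)'''$. The gap is in the last step. Your second auxiliary $a'' \approx (0' \to a)''$ lets you replace $x''$ by $(0' \to x)''$, but that converts the target into $u \to w \approx (0' \to u'') \to w$ with $u = 0' \to x$, which is structurally the \emph{same} problem you began with: the two left arguments differ by a double prime buried under a $0' \to (\,\cdot\,)$. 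Iterating only produces ever-longer $0' \to (0' \to \cdots)$ prefixes; nothing in the proposal explains why the regress terminates. You anticipate exactly this danger in your final paragraph, but you do not supply the terminating idea, and the tentative suggestion $(0 \to x)' \approx (0 \to x'')'$ would itself need proof and is not obviously easier. A complete argument requires a genuinely new observation---for instance, exploiting the special shape of the right argument $(0 \to y)'''$, or an identity that collapses $0' \to u''$ back to $u$ modulo primes in a way your two auxiliaries alone do not provide. Until that step is filled in, what you have is a plausible outline rather than a proof.
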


The following lemma plays a crucial role in the rest of the paper.

\begin{lemma} \label{general_properties2}
	Let $\mathbf A \in \C{I}_{2,0}$. Then $\mathbf A$ satisfies:
	\begin{enumerate}[{\rm(1)}]  
		\item $(x \to 0') \to y \approx (x \to y') \to y$, \label{281014_05}
		\item $x \to (0 \to x)' \approx x'$, \label{291014_02}
		\item $(y \to x) \to y \approx (0 \to x) \to y$, \label{291014_10}
		\item $[(x \to 0') \to y]' \approx (0 \to x) \to y'$, \label{071114_02}
		\item $0 \to x \approx 0 \to (0 \to x)$, \label{311014_03}
		\item $[x' \to (0 \to y)]' \approx (0 \to x) \to (0 \to y)'$, \label{031114_06}
		\item $[x \to (y \to x)']' \approx (x \to y) \to x$, \label{291014_09}
		\item $0 \to [(0 \to x) \to (0 \to y')'] \approx 0 \to (x \to y)$, \label{031114_05}
		\item $0 \to \{(0 \to x) \to y'\} \approx x \to (0 \to y')$, \label{071114_03}
		\item $0 \to (0 \to x)' \approx 0 \to x'$, \label{031114_07}
		\item $0 \to (x \to y) \approx x \to (0 \to y)$, \label{071114_04}
		\item $[(0 \to x) \to y] \to x \approx [(y \to x) \to (0 \to x)']'$, \label{291014_03}
		\item $[\{(0 \to x) \to y\} \to x]' \approx (y \to x) \to (0 \to x)'$, \label{291014_05}
		\item $(0 \to x') \to (y \to x) \approx y \to x$, \label{281014_07}
		\item $x' \to (0 \to x)  \approx 0 \to x$, \label{291014_04}
		\item $(y \to x)' \approx (0 \to x) \to (y \to x)'$, \label{291014_06}
		\item $(x \to y) \to (0 \to y)' \approx (x \to y)'$, \label{291014_07}
		\item $0 \to (x' \to y)' \approx x \to (0 \to y')$, \label{071114_01}
		\item $[(x \to y) \to x] \to [(y \to x) \to y] \approx x \to y$, \label{271114_03}
		\item $[x \to (y \to x')] \to x \approx x' \to (y \to x')'$, \label{271114_04}
		\item $x \to (y \to x') \approx y \to x'$, \label{281114_01}
		\item $0 \to (x \to y')' \approx 0 \to (x' \to y)$, \label{191114_05}
		\item $(x \to y) \to y' \approx y \to (x \to y)'$, \label{071114_05}		
		\item $x \to [(y \to z') \to x]' \approx (x' \to y) \to \{(0 \to z) \to x'\}$, \label{181114_04}
		\item $[\{((x \to 0') \to y) \to z\} \to \{u \to ((0 \to x) \to y')\}']' \\
		\approx (z \to u) \to \{(0 \to x) \to y'\}$, \label{181114_05}
		\item $(z \to x) \to (y \to z) \approx (0 \to x) \to (y \to z)$, \label{080415_01}
		\item $(x' \to y) \to [(0 \to z) \to x'] \approx (0 \to y) \to [(0 \to z) \to x']$, \label{181114_07}
		\item $x \to [(y \to z') \to x]' \approx (0 \to y) \to [(0 \to z) \to x']$, \label{181114_10}		
		\item $(x' \to y) \to (x \to y') \approx x \to y'$, \label{080415_02}
		\item $(x \to y')' \to (x' \to y)' \approx x \to y'$. \label{080415_03}
	\end{enumerate}
\end{lemma}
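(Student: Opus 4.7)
The plan is to prove the thirty identities in sequence by equational manipulation, each one available as a rewrite rule for those that follow. The core tool is axiom (I) itself: read as a rewrite rule, it lets us replace any subterm of the form $(a \to b) \to c$ by $[(c' \to a) \to (b \to c)']'$, or the reverse, giving a rich supply of rewrites for every nested implication. Combined with (I$_{2,0}$), which cancels every double prime, this pair already does most of the work. The four equivalent reformulations in Lemma \ref{general_properties_equiv} (most importantly $0' \to x \approx x$, $x' \to x \approx x$, and $(x \to x')' \approx x$) will be applied repeatedly to collapse subterms; the two swapping identities of Lemma \ref{general_properties} will be invoked whenever a prime has to cross a $0 \to -$; and Lemmas \ref{SankaLemma7.5} and \ref{SankaTheo7.6} supply the remaining micro-rewrites.

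For the short identities such as (\ref{281014_05}), (\ref{291014_02}), (\ref{291014_10}), and (\ref{291014_09}), the recipe is uniform: substitute carefully into (I), apply (I$_{2,0}$) to strip double primes, and invoke one of the Lemma \ref{general_properties_equiv} reductions. For example, (\ref{291014_02}) falls out of (I) under $y := 0$, $z := x$, together with $x' \to x = x$ and a double-negation. Several of the primed variants then follow directly by taking the prime of a previously proved identity and clearing double primes: (\ref{291014_05}) from (\ref{291014_03}), (\ref{291014_06}) from (\ref{291014_04}), and (\ref{291014_07}) from earlier items under a suitable substitution. The heavier composite identities (\ref{181114_04}), (\ref{181114_05}), (\ref{181114_10}), and the trio (\ref{080415_01})--(\ref{080415_03}) appear to be explicitly positioned as stepping stones for later sections and are to be obtained by cascading the earlier items of this very list.

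I expect the main obstacle to be purely combinatorial bookkeeping: thirty derivations whose dependencies must be carefully dovetailed and whose substitutions into (I) must be chosen cleverly enough that the resulting expansion collapses back to something short. There is no conceptual hurdle, but the most delicate derivations are probably (\ref{271114_03}) and (\ref{271114_04}), whose two sides interchange the roles of $x$ and $y$ in a nontrivial way (forcing essential use of the symmetry of (I) together with (\ref{291014_09}) and double negation), and the long composite identity (\ref{181114_05}), whose left-hand side has depth five and will demand a carefully scripted cascade through (\ref{071114_02})--(\ref{071114_04}).
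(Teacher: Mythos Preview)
Your plan is correct and mirrors the paper's own approach: the appendix proves the thirty items in the listed order by exactly the mechanism you describe---rewriting with (I), stripping double primes via (I$_{2,0}$), and invoking the reductions of Lemmas \ref{general_properties_equiv}, \ref{general_properties}, \ref{SankaLemma7.5}, and \ref{SankaTheo7.6}. A couple of your dependency guesses are slightly off (e.g.\ (\ref{291014_06}) is derived directly rather than from (\ref{291014_04}), and (\ref{181114_05}) goes through an auxiliary instance of (\ref{071114_02}) plus one application of (I) rather than a long cascade), but the overall strategy and difficulty assessment are accurate.
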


\begin{proof}
Please see the appendix for the proofs of all these items.				
\end{proof}

\begin{lemma} \label{Lemma_300315_01}
	Let  $\mathbf A \in \C{I}$.  Then $\mathbf A$ satisfies:
	\begin{enumerate} [{ \rm(1)}]
		\item  $[(x \to y) \to z]''' \approx [(x \to y) \to z]'$, \label{300315_01}
		\item  $(x \to y) \to z \approx  [(x \to y) \to z]''$, \label{060415_01}
		\item  $(x \to y)' \approx (x'' \to y)'$, \label{300315_02}
		\item  $x \land y \approx (x \land y)''$, \label{310315_03}
		\item  $x \lor y \approx (x \lor y)''$, \label{310315_05}
		\item  $x \land y \approx (x' \lor y')'$. \label{310315_04}
	\end{enumerate}
\end{lemma}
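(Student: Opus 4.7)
The strategy is to prove item (2) first as the main engine, deduce item (1) at once by priming both sides, and then derive items (4)--(6) from (2) and (3). The only genuinely delicate step is item (3).

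For item (2), set $u := (x \to y) \to z$. One application of (I) rewrites $u = a'$, where $a := (z' \to x) \to (y \to z)'$ is itself of the form $(P \to Q) \to R$. A second application of (I) to $a$ gives $a = b'$ for another term $b$ of the same shape. Hence $u'' = a''' = b''''$; by Lemma \ref{SankaTheo7.6} applied with second argument $0$ (which yields the derived identity $t'''' \approx t''$ for every term $t$), $b'''' = b''$, and $b'' = a' = u$. Item (1) then follows by priming both sides of (2).

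The core of the argument is item (3), and the key observation is the easy identity $0' \to 0 \approx 0$: since $0' = 0 \to 0$ by definition, $0' \to 0 = (0 \to 0) \to 0 = 0''$, which equals $0$ by (I$_{0}$). With this lever, two applications of (I) give $(0 \to x)' = [0 \to x'']'$ and $(0 \to x'')' = [0 \to x''']'$; one more (I)-expansion together with $x'''' \approx x''$ (from Lemma \ref{SankaTheo7.6}) collapses $[0 \to x''']'$ back to $[0 \to x'']'$, so $(0 \to x)' \approx (0 \to x'')'$. Next, (I) with $p = q = 0$ and with $r = x$, respectively $r = x''$, yields $0' \to x = [x'' \to (0 \to x)']'$ and $0' \to x'' = [x'' \to (0 \to x'')']'$ (using $x'''' \approx x''$ again for the second); by the previous step these coincide, so $0' \to x \approx 0' \to x''$. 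A final (I)-expansion gives $(x \to y)' = [(0' \to x) \to y'']'$ and $(x'' \to y)' = [(0' \to x'') \to y'']'$, which are now visibly equal. The main difficulty is noticing that $0' \to 0 \approx 0$ is exactly the lever that lets two otherwise parallel (I)-expansions be identified.

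Items (4)--(6) are quick consequences. For (4), $x \land y = (x \to y')'$ is of the form $(P \to Q) \to R$ with $R = 0$, so (2) immediately gives $(x \land y)'' \approx x \land y$. For (5), unwinding the definitions gives $x \lor y = (x' \to y'')''$, and applying (2) to $x' \to y'' = (x \to 0) \to y''$ shows $(x' \to y'')'' \approx x' \to y''$; hence $x \lor y \approx x' \to y''$, itself of the form $(P \to Q) \to R$, and (5) follows. For (6), the analogous reduction gives $x' \lor y' \approx x'' \to y'''$, so $(x' \lor y')' = (x'' \to y''')'$; by (I) combined with $y'''' \approx y''$ this equals $(x'' \to y')'$, and by (3) this equals $(x \to y')' = x \land y$, proving (6).
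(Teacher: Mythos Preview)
Your proof is correct and follows the paper's strategy (expand via (I), use Lemma~\ref{SankaTheo7.6} to drop two primes, collapse back), with the cosmetic difference that you establish (2) before (1) while the paper does the reverse, and your route to (3) through the intermediate identity $0'\to x \approx 0'\to x''$ is slightly cleaner than the paper's single long chain invoking Lemma~\ref{SankaLemma7.5}. One small glitch in your item~(3): the line ``$(0 \to x'')' = [0 \to x''']'$'' should read $[0 \to x'''']'$ (the (I)-expansion introduces a fourth prime, not a third), and in fact that whole detour is redundant---your first equation $(0 \to x)' = [0 \to x'']'$ already \emph{is} $(0 \to x)' = (0 \to x'')'$, since the bracket-prime on the right is just the definition of~$'$.
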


\begin{proof} 
(1): \quad $[(x \to y) \to z]''' \overset{(I)}{\approx} [(z' \to x) \to (y \to z)']''''$\\

\noindent$\overset{ \ref{SankaTheo7.6}}{\approx}  [(z' \to x) \to (y \to z)']'' \overset{(I)}{\approx} [(x \to y) \to z]'$.\\
		
(2): \quad $(x \to y) \to z \overset{(I)}{\approx} [(z' \to x) \to (y \to z)']'$\\

\noindent$\overset{(\ref{300315_01})}{\approx} [(z' \to x) \to (y \to z)']''' \overset{(I)}{\approx} [(x \to y) \to z]''$.		
		
(3):	\quad $(x \to y)'	 \approx  (x \to y) \to 0  \overset{(I)}{\approx}  [(0' \to x) \to (y \to 0)']'$
\begin{align*}
&\approx  [\{(0 \to 0) \to x\} \to (y \to 0)']' \overset{(I)}{\approx}  [\{(x' \to 0) \to (0 \to x)'\}' \to (y \to 0)']' \\
&\overset{\ref{SankaTheo7.6}}{\approx}  [\{(x''' \to 0) \to (0 \to x)'\}' \to (y \to 0)']'  \\
&\overset{\ref{SankaLemma7.5}}{\approx}  [\{(x''' \to 0) \to (0 \to x'')'\}' \to (y \to 0)']' 
 \overset{(I)}{\approx}  [\{(0 \to 0) \to x''\} \to (y \to 0)']'  \\
&\approx  [(0' \to x'') \to (y \to 0)']'  \overset{(I)}{\approx}  (x'' \to y) \to 0  \approx  (x'' \to y)'. 
\end{align*} 

(4): \quad $x \lor y \approx  (x' \land y')'  \approx  (x' \to y'')''   \approx  [(x \to 0) \to y'']''$\\ 

$\overset{(\ref{300315_01})}{\approx}  [(x \to 0) \to y'']''''   \approx  (x' \to y'')'''' \approx  (x' \land y')'''  
 \approx  (x \lor y)''$. \\

(5): \quad $x \land y	\overset{(M)}{\approx}  (x \to y')'  \overset{(\ref{300315_02})}{\approx}  (x'' \to y')' \approx  [(x' \to 0) \to y']'$\\

$\overset{(\ref{300315_01})}{\approx}  [(x' \to 0) \to y']''' \approx  (x'' \to y')''' \overset{(\ref{300315_02})}{\approx}  (x \to y')''' 
		 \overset{(M)}{\approx}  (x \land y)''$.\\
		
(6): \quad $x \land y	\overset{(M)}{\approx}  (x \to y')' 
\overset{(\ref{300315_02})}{\approx}  (x'' \to y')' \approx  [(x' \to 0) \to y']'$\\

$\overset{(\ref{300315_01})}{\approx}  [(x' \to 0) \to y']'''  \approx  (x'' \to y')''' 
\overset{\ref{SankaLemma7.5}}{\approx}  (x'' \to y''')''' $\\
$\overset{(M)}{\approx}  (x'' \land y'')''  \approx  (x' \lor y')'$. 
\end{proof}

The following theorem gives two new characterizations of $\C{I}_{2, 0}$.
\begin{theorem} \label{T3.5}
	Let $\mathbf{A} \in \C{I}$    
	Then
		 $ \C{I}_{2,0} = \C{MID} =  \C{JID} $.
\end{theorem}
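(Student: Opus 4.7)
The plan is to establish the two containments in each direction among $\C{I}_{2,0}$, $\C{MID}$, and $\C{JID}$, which amounts to verifying that the three defining identities $x''\approx x$, $x\land x\approx x$, and $x\lor x\approx x$ are equivalent modulo the axioms of $\C{I}$. All of the heavy computational work has already been packaged into Lemmas \ref{SankaTheorem8.15} and \ref{Lemma_300315_01}, so the argument should be short.

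First I would handle $\C{I}_{2,0} \subseteq \C{MID}$: this is exactly the content of Lemma \ref{SankaTheorem8.15}. For $\C{I}_{2,0}\subseteq \C{JID}$ I would use the definition $x\lor x := (x'\land x')'$; applying $\C{I}_{2,0}\subseteq \C{MID}$ to the element $x'$ gives $x'\land x'\approx x'$, whence $x\lor x\approx x''\approx x$ by (I$_{2,0}$).

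Next, for the reverse containments, the key observation is that Lemma \ref{Lemma_300315_01}(4) and (5) give \emph{unconditionally} in $\C{I}$ the identities $x\land y\approx (x\land y)''$ and $x\lor y\approx (x\lor y)''$. Specializing $y:=x$ yields $x\land x\approx (x\land x)''$ and $x\lor x\approx (x\lor x)''$. Hence if (MID) holds, then $x\approx x\land x\approx (x\land x)''\approx x''$, proving $\C{MID}\subseteq \C{I}_{2,0}$; similarly, (JID) gives $x\approx x\lor x\approx (x\lor x)''\approx x''$, proving $\C{JID}\subseteq \C{I}_{2,0}$.

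Putting these four containments together yields $\C{I}_{2,0}=\C{MID}=\C{JID}$. There is no real obstacle here: once Lemma \ref{Lemma_300315_01}(4),(5) is available, both ``idempotency implies involutivity'' directions are one-line substitutions, and the converse directions reduce to Lemma \ref{SankaTheorem8.15} together with a single application of the definitions of $\land$ and $\lor$. The only thing to be careful about is to invoke Lemma \ref{Lemma_300315_01}(4),(5) in the variety $\C{I}$ (not $\C{I}_{2,0}$), which is legitimate since those identities were proved in $\C{I}$.
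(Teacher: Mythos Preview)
Your proof is correct. The paper instead argues the cyclic chain $\C{I}_{2,0}\subseteq\C{MID}\subseteq\C{JID}\subseteq\C{I}_{2,0}$: the step $\C{MID}\subseteq\C{JID}$ is carried out by a direct computation of $x\lor x$ using several parts of Lemma~\ref{Lemma_300315_01}, and $\C{JID}\subseteq\C{I}_{2,0}$ is done by expanding $(x\lor x)''$ and invoking Lemma~\ref{SankaTheo7.6}. Your route is more symmetric and shorter: rather than proving $\C{MID}\subseteq\C{JID}$ directly, you send both $\C{MID}$ and $\C{JID}$ straight back to $\C{I}_{2,0}$ by specializing the unconditional identities $x\land y\approx(x\land y)''$ and $x\lor y\approx(x\lor y)''$ of Lemma~\ref{Lemma_300315_01}(4),(5) at $y:=x$. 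This exploits the fact that those identities were already established over all of $\C{I}$, so the ``idempotency $\Rightarrow$ involutivity'' direction becomes a one-line substitution in each case; the paper's argument effectively re-derives a special case of item~(5) inline. Both approaches rely on the same underlying lemmas, but yours packages the dependence more cleanly.
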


\begin{proof}
From Lemma \ref{SankaTheorem8.15}, we get $ \C{I}_{2,0} \subseteq \C{MID}$. Using $x \land x \approx x$, we get\\
\quad  $x \lor x	 \approx  (x' \land x')'  
   	\overset{hyp}{\approx} x'' 
	\overset{hyp}{\approx}  (x \land x)''   
	\approx  (x \to x')'''  $
\begin{align*} 
	&\overset{\ref{Lemma_300315_01} (\ref{300315_02})} {\approx}  (x'' \to x')'''   
	\approx  [(x' \to 0) \to x']'''    	
\overset{\ref{Lemma_300315_01} (\ref{300315_01})}{\approx}   [(x' \to 0) \to x']' \\
&\approx (x'' \to x')' \overset{\ref{Lemma_300315_01} (\ref{300315_02})}{\approx}  (x \to x')' 
\overset{}{\approx} x \land x \overset{(MID)}{\approx} x,  
\end{align*}  	
 proving $\C{MID} \subseteq \C{JID}$.   Finally, from $x \lor x \approx x$ and 	
 Lemma \ref{SankaTheo7.6}, we have
 $x'' \approx (x \lor x)'' \approx (x' \land x')''' \approx (x' \to x'')'''' \approx (x' \to x'')'' \approx (x' \land x')' \approx x \lor x \approx x$, thus $\C{JID} \subseteq \C{I}_{2, 0}$.
\end{proof}

In the sequel we will use these names interchangeably.

We conclude this section by giving a characterization of those algebras of $\C{I}$ that are implication algebras in the sense of Abbott \cite{abbottSemi1967}.

  An algebra $\mathbf{A} = \langle A, \to 0 \rangle$ is an {\it implication algebra in the sense of 
Abbott}
\cite{abbottSemi1967} if it satisfies:
\begin{enumerate} 
	\item[(1)] $(x \to y) \to x \approx x$, 
	\item[(2)] $(x \to y) \to y \approx (y \to x) \to x$,  
	\item[(3)] $x \to (y \to z) \approx y \to (x \to z)$. 
\end{enumerate}

\begin{theorem}
	$\mathbf{A} \in \C I$ is an implication algebra in the sense of 
Abbott	
	iff $A$ is a Boolean algebra.
\end{theorem}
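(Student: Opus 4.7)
For the \emph{only if} direction, suppose $\mathbf{A} \in \C{I}$ satisfies Abbott's axioms (1), (2), (3). Axiom (1) is literally (DM), so $\mathbf{A}$ is a De Morgan algebra. Substituting $y := 0$ into (DM) yields $x' \to x \approx x$, which by Lemma \ref{general_properties_equiv} places $\mathbf{A}$ in $\C{I}_{2,0}$; in particular $0' \to x \approx x$. Taking $y := 0$ in Lemma \ref{SankaTheorem5.1} then gives $(0 \to x)' \approx 0$, and applying $'$ together with $x'' \approx x$ produces the key identity $0 \to x \approx 0'$.

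With this in hand, I would substitute $y := 0'$ in Abbott's axiom (2), obtaining $(x \to 0') \to 0' \approx (0' \to x) \to x$. The right-hand side collapses to $x \to x$ via $0' \to x \approx x$. For the left-hand side, I would expand via the defining identity (I) of $\C{I}$ and simplify using $0'' \approx 0$ and $0' \to 0' \approx 0'$ (the latter from $0' \to x \approx x$ at $x = 0'$):
\[
(x \to 0') \to 0' \approx \bigl[(0 \to x) \to 0\bigr]' \approx (0 \to x)'' \approx 0 \to x \approx 0'.
\]
Therefore $x \to x \approx 0'$, i.e., (BA) holds, and $\mathbf{A}$ is a Boolean algebra.

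For the \emph{if} direction, every Boolean algebra lies in $\C{I}_{2,0}$, and a routine unfolding of the definitions (M) and (J) yields $x \to y \approx x' \lor y$ in $\mathbf{A^{mj}}$, which is a Boolean lattice. Abbott's three axioms then reduce to standard Boolean identities: (1) is absorption, (3) is associativity and commutativity of $\lor$, and (2) uses distributivity together with the Boolean complementation law $y \lor y' \approx 0'$. I would simply record these standard verifications without writing them out.

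The main obstacle is the left-hand side simplification above: one must apply (I) carefully, combined with $0'' \approx 0$ and the freshly derived $0 \to x \approx 0'$, to bring $(x \to 0') \to 0'$ down to $0'$. Once that step is executed, (BA) is immediate. It is worth noting that Abbott's associative-style axiom (3) plays no active role in the forward derivation: axiom (1) alone lifts $\mathbf{A}$ into $\C{I}_{2,0}$, and then the single substitution $y := 0'$ in axiom (2) already forces (BA).
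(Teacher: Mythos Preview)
Your argument is correct, but it proceeds along a different line from the paper's. The paper argues semantically: once axiom~(1) forces $\mathbf{A}\in\C{DM}$, it simply observes that the $3$-element Kleene algebra and the $4$-element De Morgan algebra both fail axiom~(2), so the subvariety of $\C{DM}$ cut out by (1)--(3) must already be $\C{BA}$ (relying implicitly on the known subvariety lattice of $\C{DM}$). You instead give a direct equational derivation of (BA): from (DM) you land in $\C{I}_{2,0}$, use Lemma~\ref{SankaTheorem5.1} to obtain $0\to x\approx 0'$, and then a single instance $y:=0'$ of axiom~(2), simplified via~(I), yields $x\to x\approx 0'$. Your route is more elementary and self-contained---it does not appeal to the classification of subvarieties of $\C{DM}$---and as you note it shows that axiom~(3) is redundant for the forward implication. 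The paper's route is terser but leans on external structure theory.
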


\begin{proof}	
	Let $\mathbf{A} \in \C{I}$. If $\mathbf{A}$ is a Boolean algebra, then clearly $\mathbf{A}$ is an Implication algebra in the sense of Abbott.  For the converse, let $\C{V}$ be the subvariety of $\C{I}$ defined by 
(1), (2) and~(3). 
Since $\C{V}$ satisfies (1),  
clearly  $\C{V} \subseteq
\C{DM}$.   To finish off the proof, it is sufficient to observe that the $3$-element Kleene algebra and the $4$-element De Morgan algebra fail to satisfy (2).  
\end{proof}

\section{The varieties $\C{BA}$ and $\C{KL}$ as subvarieties of  $\C{MID}$}\label{S:4}

In this section we describe the varieties of Boolean algebras and Kleene algebras as subvarieties of the variety $\C{MID}$.
In the next theorem, Boolean algebras are 
characterized
relative to $\C{MID}$.

\begin{theorem}
	Let $\mathbf A \in \C{MID}$.  Then the following are equivalent in $\mathbf{A}$:
	\begin{enumerate} [{ \rm (a)}]
		\item   \label{010715_04}
			$\mathbf A \models x \land  x' \approx  0$, 
		\item  \label{010715_06} 
		 $\mathbf{A}$ is a Boolean algebra.
	\end{enumerate}
\end{theorem}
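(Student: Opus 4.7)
The direction (b) $\Rightarrow$ (a) is standard since $x \land x' = 0$ holds in every Boolean algebra. For (a) $\Rightarrow$ (b), I would first extract the Boolean axiom (BA) from the hypothesis and then separately verify the De Morgan axiom. Since $\C{MID} = \C{I}_{2,0}$ by Theorem \ref{T3.5}, the identity $x'' \approx x$ is available, so definition (M) gives
\[
	x \land x' \approx (x \to x'')' \approx (x \to x)'.
\]
Hypothesis (\ref{010715_04}) therefore yields $(x \to x)' \approx 0$, whence $x \to x \approx 0'$, i.e., (BA) holds. It remains to show that $\mathbf{A}$ is a De Morgan algebra; by Lemma \ref{SankaTheorem5.1} this amounts to establishing $(0 \to x) \to y \approx y$, and I propose to reach this by first proving the much stronger $0 \to y \approx 0'$.

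To obtain $0 \to y \approx 0'$, the plan is to show $x \to 0' \approx 0'$ for every $x$. Specializing identity (\ref{080415_01}) of Lemma \ref{general_properties2} with $z := y$ and then using (BA) in the form $y \to y \approx 0'$ collapses it to
\[
	(y \to x) \to 0' \approx (0 \to x) \to 0'.
\]
Rewriting both sides via Lemma \ref{general_properties}(b) (which gives $a \to 0' \approx 0 \to a'$), then simplifying the right-hand side with identity (\ref{031114_07}) of Lemma \ref{general_properties2} ($0 \to (0 \to x)' \approx 0 \to x'$), and finally returning via Lemma \ref{general_properties}(b), reduces the equation to $(y \to x) \to 0' \approx x \to 0'$. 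Setting $y := x$ and using $0' \to 0' \approx 0'$ then forces $x \to 0' \approx 0'$. Combining this with Lemma \ref{general_properties}(b) in the form $0 \to y \approx y' \to 0'$ (valid since $y'' \approx y$) yields $0 \to y \approx 0'$, so $(0 \to x) \to y \approx 0' \to y \approx y$ by Lemma \ref{general_properties_equiv}(a). By Lemma \ref{SankaTheorem5.1} the algebra is De Morgan, and with (BA) already in hand it is Boolean.

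The main obstacle is selecting the right manipulation among the many identities of Lemma \ref{general_properties2} to promote the pointwise information $x \to x \approx 0'$ into the global constancy $x \to 0' \approx 0'$. The decisive step is the substitution $z := y$ in identity (\ref{080415_01}): after (BA) is applied, the right-hand side loses its dependence on $y$, and from there the standard manipulations available in $\C{I}_{2,0}$ carry the argument through.
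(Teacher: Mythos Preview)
Your proof is correct, and its overall strategy coincides with the paper's: first extract $(\mathrm{BA})$ from the hypothesis via $x\land x'\approx(x\to x)'$, then establish the De~Morgan identity $(0\to x)\to y\approx y$ and invoke Lemma~\ref{SankaTheorem5.1}. The difference is tactical. The paper reaches $(0\to x)\to y\approx y$ in one shot by writing $(0\to x)\to y\approx(x'\to 0')\to y\approx\{x'\to(x\to x'')\}\to y$ and collapsing the inner term with Lemma~\ref{general_properties2}(\ref{281114_01}), which gives $x'\to(x\to x'')\approx x\to x\approx 0'$ directly. Your route instead proves the intermediate statement $x\to 0'\approx 0'$ (equivalently $0\to y\approx 0'$) via identity~(\ref{080415_01}) specialized at $z:=y$ and the simplification~(\ref{031114_07}); this is a bit longer but yields a slightly stronger intermediate fact. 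Either set of identities from Lemma~\ref{general_properties2} does the job.
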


\begin{proof} 
		 (\ref{010715_04}) $\Rightarrow$ (\ref{010715_06}):  $0' \approx (x \wedge x')' = (x \to x'')'' \approx x \to x$ and, hence,
 $(0 \to x) \to y  \overset{\ref{general_properties} (\ref{cuasiConmutativeOfImplic})}{\approx}  (x' \to 0') \to y     
		\approx  \{x' \to (x \to x)\} \to y    
\noindent	\overset{hyp}{\approx}  \{x' \to (x \to x'')\} \to y    
		\overset{\ref{general_properties2} (\ref{281114_01})}{\approx}  (x \to x) \to y  
		\approx  0' \to y   
		\overset{\ref{general_properties_equiv} (\ref{TXX})}{\approx}  y. $  		
  So, it follows from  
Lemma \ref{SankaTheorem5.1} and (a) that (b) is true,		
while (\ref{010715_06}) $\Rightarrow$ (\ref{010715_04}) is straightforward. 
\end{proof}

Next, we present an axiomatization of Kleene algebras as a subvariety of $\C{I}_{2,0}$.  For this purpose, we need the following result.

\begin{lemma}\label{L:aux}
	Let $\mathbf{A} \in \C{I}_{2,0}$ and $\mathbf{A}$ satisfy $(x \to x) \to (y \to y) \approx y \to y$.  Then
	$\mathbf{A} \models [x \to (x' \to x')] \to 0' \approx 0 \to x$.
\end{lemma}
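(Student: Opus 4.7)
My plan is to unfold the left-hand side using the defining axiom (I) of $\C{I}$-groupoids with $0'$ placed in the $z$-position, and then let the hypothesis collapse the resulting inner bracket. The key observation is that the hypothesis $(x \to x) \to (y \to y) \approx y \to y$, specialised at $x := x'$ and $y := 0$, yields $(x' \to x') \to 0' \approx 0'$; this is exactly what is needed to simplify the inner subterm produced by applying (I).

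Concretely, I would apply (I) with the substitutions ``$x$'' $:= x$, ``$y$'' $:= x' \to x'$, and ``$z$'' $:= 0'$, which rewrites
\[
[x \to (x' \to x')] \to 0' \ \approx\ \bigl[\bigl((0')' \to x\bigr) \to \bigl((x' \to x') \to 0'\bigr)'\bigr]'.
\]
Since $\mathbf{A} \in \C{I}_{2,0}$, we have $(0')' = 0'' = 0$, so the first inner factor becomes $0 \to x$. The hypothesis now collapses the second inner factor: $((x' \to x') \to 0')' \approx (0')' = 0$. Thus the right-hand side simplifies to $[(0 \to x) \to 0]' = [(0 \to x)']' = (0 \to x)''$, which by $x'' \approx x$ in $\C{I}_{2,0}$ is just $0 \to x$.

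The only non-routine step is spotting the right instantiation of (I); once one decides to put $0'$ in the $z$-slot and $x' \to x'$ in the $y$-slot, the remaining manipulations are forced, relying only on $x'' \approx x$ from $\C{I}_{2,0}$ and the single application of the hypothesis. In particular, no further identity from Lemma \ref{general_properties2} is needed, which is why this direct route seems preferable to first simplifying $x \to (x' \to x')$ via Lemma \ref{general_properties2}(\ref{281114_01})\,---\,that route reduces the left-hand side to $0'$ and then demands the separate (and harder) derivation that $0 \to x \approx 0'$.
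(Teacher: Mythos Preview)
Your proof is correct and follows essentially the same approach as the paper: both apply axiom (I) with $z = 0'$, use $0'' \approx 0$ to reduce the first inner factor to $0 \to x$, invoke the hypothesis (with $y = 0$, noting $0' = 0 \to 0$) to collapse $(x' \to x') \to 0'$ to $0'$, and finish with $(0 \to x)'' \approx 0 \to x$. The only difference is cosmetic---the paper writes $0'$ as $0'' \to 0''$ before invoking the hypothesis---so the two arguments are identical in substance.
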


\begin{proof}
\quad $[x \to (x' \to x')] \to 0'  \overset{(I)}{\approx} [(0'' \to x) \to \{(x' \to x') \to 0' \}' ]' $  
\begin{align*}
	& \approx  [(0 \to x) \to \{(x' \to x') \to (0'' \to 0'') \}' ]'   
	\overset{hyp}{ \approx}  [(0 \to x) \to (0'' \to 0'')' ]' \\  
	& \approx  [(0 \to x) \to 0'' ]'  
	  \approx (0 \to x)''    
          \approx  0 \to x,   
	\end{align*}
proving the lemma.
\end{proof}

\begin{theorem} 
	Let $\mathbf{A} \in \C{I}_{2,0}$.  Then
	the following are equivalent in $\mathbf{A}$:
	\begin{enumerate} [{ \rm (1)}]
	\item
		 $\mathbf{A} \models (x \to x) \to (y \to y)  \approx y \to y$,
	\item  $\mathbf{A}$ is a Kleene algebra.
	\end{enumerate}
\end{theorem}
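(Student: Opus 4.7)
\medskip

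The plan is as follows. The direction (2) $\Rightarrow$ (1) is essentially free: the Kleene axiom (KL$_2$) reads $(y \to y) \to (x \to x) \approx x \to x$, which, after interchanging the roles of $x$ and $y$, is exactly identity (1). So any Kleene algebra satisfies (1).

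The substantive direction is (1) $\Rightarrow$ (2). Observe that identity (1) is, up to renaming of variables, the axiom (KL$_2$) itself. Hence, once we show that (1) forces $\mathbf{A}$ to be a De Morgan algebra, the Kleene property will follow automatically. By Lemma \ref{SankaTheorem5.1}, establishing De Morgan amounts to deriving the identity $(0 \to x) \to y \approx y$ in $\mathbf{A}$.

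To do this, I would chain three ingredients. First, the auxiliary Lemma \ref{L:aux} (whose hypothesis is exactly (1)) gives
\[
[x \to (x' \to x')] \to 0' \approx 0 \to x.
\]
Second, Lemma \ref{general_properties2}(\ref{281114_01}), applied with $y := x'$, yields $x \to (x' \to x') \approx x' \to x'$, so the displayed identity collapses to
\[
(x' \to x') \to 0' \approx 0 \to x.
\]
Third, the hypothesis (1), applied with its variables specialized as $x := x'$ and $y := 0$, gives $(x' \to x') \to 0' \approx 0'$ (noting $0 \to 0 = 0'$). Combining the last two equalities produces $0 \to x \approx 0'$, and then Lemma \ref{general_properties_equiv}(\ref{TXX}) delivers
\[
(0 \to x) \to y \approx 0' \to y \approx y,
\]
as required. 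Invoking Lemma \ref{SankaTheorem5.1} we conclude $\mathbf{A}$ is a De Morgan algebra, and since (1) is (KL$_2$), $\mathbf{A}$ is a Kleene algebra.

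The only step that requires some cleverness is the simplification $x \to (x' \to x') \approx x' \to x'$ via Lemma \ref{general_properties2}(\ref{281114_01}); without this reduction the expression in Lemma \ref{L:aux} is too opaque to exploit. Everything else is a direct specialization of identities already available.
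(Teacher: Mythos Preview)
Your proof is correct and follows essentially the same strategy as the paper: both use Lemma~\ref{L:aux} together with the hypothesis to force the De Morgan property, after which (KL$_2$) yields Kleene. Your argument is in fact more explicit at the key step---the paper compresses the passage from $(0 \to y) \to x$ to $0' \to x$ into a bare citation of Lemma~\ref{L:aux}, whereas you spell out the reduction $x \to (x' \to x') \approx x' \to x'$ via Lemma~\ref{general_properties2}(\ref{281114_01}) and then specialize the hypothesis with $y:=0$ to obtain $0 \to x \approx 0'$, which is exactly what is needed.
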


\begin{proof}	
(1) $\Rightarrow (2):$
 $(x \to y) \to x	 \overset{\ref{general_properties2} (\ref{291014_10})}{ \approx}  (0 \to y) \to x  
\overset{\ref{L:aux}}{\approx}  0'  \to x  
	\overset{\ref{general_properties2} (\ref{291014_10})}{\approx}   x''   	
	\overset{hyp}{\approx}   x$.  	
Hence $\mathbf{A}$ is a De Morgan algebra and so, in view of (1), $\mathbf{A}$ is a Kleene algebra.  Thus (1) implies (2).  The implication (2) $\Rightarrow$(1) follows directly from the definition of Kleene algebras.   	
\end{proof}

\section{A Glivenko-like theorem for $\C{I}$ }\label{S:5}

Let $\mathbf A \in \C{I}$. We introduce the following notation:
\[A'' = \{a'' \mid a \in A\}.\]
We also let $\mathbf{A''} := \langle A'', \to, 0 \rangle$.

\begin{lemma} \label{Lemma_App_subalg}
	Let $\mathbf A = \langle A, \to, 0 \rangle \in \C{I}$. Then
	$\mathbf{A''}$ is a subalgebra of $\mathbf{A}$.
\end{lemma}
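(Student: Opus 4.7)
The plan is to check the two conditions for subalgebra-hood: the constant $0$ belongs to $A''$, and $A''$ is closed under the binary operation $\to$.

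For the constant, the axiom $(\textup{I}_{0})$ gives $0 = 0''$ directly, so $0 \in A''$.

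For closure under $\to$, suppose $u, v \in A''$, so $u = a''$ for some $a \in A$ (we do not need a similar representation of $v$, as will be seen). The key observation is that, by definition of $'$, we can write
\[
u \to v \;=\; a'' \to v \;=\; (a' \to 0) \to v,
\]
which is a term of the form $(x \to y) \to z$ with $x := a'$, $y := 0$, $z := v$. Now I invoke Lemma \ref{Lemma_300315_01}(\ref{060415_01}), which states $(x \to y) \to z \approx [(x \to y) \to z]''$: specializing the instance with the above substitutions yields
\[
u \to v \;=\; (a' \to 0) \to v \;=\; \bigl[(a' \to 0) \to v\bigr]'' \;=\; (u \to v)'' \in A''.
\]
Thus $A''$ is closed under $\to$, completing the verification that $\mathbf{A''}$ is a subalgebra of $\mathbf{A}$.

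There is no real obstacle here; the proof is a one-line application of Lemma \ref{Lemma_300315_01}(\ref{060415_01}), once one recognizes that any $\to$-expression headed by an element of $A''$ already has the shape $(x \to y) \to z$. The only subtle point is that we do \emph{not} need to know that $v$ is a double negation in order to conclude $u \to v \in A''$; the formula $(x \to y) \to z \approx [(x \to y) \to z]''$ is a universal identity of $\C{I}$, so it is the outer shape of $u \to v$, rather than any property of $v$, that delivers the closure.
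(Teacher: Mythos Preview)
Your proof is correct and follows essentially the same approach as the paper: both rewrite the left argument $u = a''$ as $(a' \to 0)$ so that $u \to v$ has the shape $(x \to y) \to z$, and then apply Lemma~\ref{Lemma_300315_01}(\ref{060415_01}). Your remark that the representation of $v$ as a double negation is unnecessary is a nice sharpening; the paper writes $b = b_0''$ but, as you observe, never uses this in the key step.
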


\begin{proof}
	Since $0 = 0''$ we obtain that $0 \in A''$. If $a,b \in A''$, $a = a''_0$ and $b = b''_0$ with $a_0, b_0 \in A$. In view of Lemma \ref{Lemma_300315_01} (\ref{060415_01}), Lemma \ref{Lemma_300315_01} (\ref{300315_02}) and  \ref{SankaLemma7.5} we have that  
$(a \to b)'' = (a''_0 \to b''_0)'' =  ((a'_0 \to 0) \to b''_0)'' \overset{\ref{Lemma_300315_01} (\ref{060415_01})}{=} (a'_0 \to 0) \to b''_0 = a''_0 \to b''_0 =  a \to b$.
\end{proof}

We now present a Glivenko-like result for  algebras in $\C{I}$. In view of Definition \ref{definit_DM_KL_BA}, the
 following Theorem is immediate from Lemma \ref{Lemma_App_subalg}.

\begin{theorem} \label{Lemma_App_in_I20} 
	Let $\mathbf A = \langle A, \to, 0 \rangle \in \C I$, then
	\begin{enumerate}[{\rm(a)}] 
		\item  $\langle A'', \to, 0 \rangle \in \C{I}_{2,0}$,
		\item  $\mathbf A \models (x'' \to y'') \to x'' \approx x''$ iff $\mathbf{A''}$ is a De Morgan algebra,
		\item  $\mathbf A \models (y'' \to y'') \to (x'' \to x'') \approx x'' \to x''$ iff $\mathbf{A''}$ is a Kleene algebra,
		\item  $\mathbf A \models x'' \to x''  \approx 0'$ iff $\mathbf{A''}$ is a Boolean algebra.
	\end{enumerate}
\end{theorem}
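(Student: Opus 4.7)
The plan is to leverage Lemma~\ref{Lemma_App_subalg} (which gives $\mathbf{A''}$ as a subalgebra of $\mathbf{A}$) together with the description $A'' = \{b'' : b \in A\}$. These combine into a translation principle: for any $\{\to, 0\}$-identity $P \approx Q$ in variables $x_1, \ldots, x_n$,
\[
\mathbf{A''} \models P \approx Q \quad \Longleftrightarrow \quad \mathbf{A} \models P(x_1'', \ldots, x_n'') \approx Q(x_1'', \ldots, x_n''),
\]
because evaluation of a term in the subalgebra $\mathbf{A''}$ coincides with its evaluation in $\mathbf{A}$, and every tuple over $A''$ arises as $(b_1'', \ldots, b_n'')$ for some $(b_1, \ldots, b_n) \in A^n$. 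Once this principle is in hand, parts (b)--(d) drop out from the relevant defining identities in Definition~\ref{definit_DM_KL_BA}.

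For (a), I would first invoke Lemma~\ref{SankaTheo7.6} with $y := 0$, which yields $x'''' \approx x''$ in $\mathbf{A}$. Consequently, for any $a = a_0'' \in A''$ we have $a'' = a_0'''' = a_0'' = a$, so $\mathbf{A''}$ satisfies (I$_{2,0}$). Combined with $\mathbf{A''} \in \C{I}$ from Lemma~\ref{Lemma_App_subalg}, this gives $\mathbf{A''} \in \C{I}_{2,0}$.

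For (b)--(d), apply the translation principle to the axioms recalled in Definition~\ref{definit_DM_KL_BA}: the De Morgan axiom $(x \to y) \to x \approx x$ translates to the equation in (b); the Kleene axiom (KL$_2$), $(y \to y) \to (x \to x) \approx x \to x$, translates to the equation in (c) (to be read in conjunction with (b), since by definition a Kleene algebra is already a De Morgan algebra); and the Boolean axiom (BA), $x \to x \approx 0'$, translates to the equation in (d).

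The main (indeed only) obstacle is to articulate the translation principle carefully, since the subalgebra structure makes the substitution $x_i \mapsto x_i''$ correspond exactly to ranging over $A''$. Once that is done, each bi-implication reduces to a one-line substitution argument, and all the substantive work is absorbed into Lemma~\ref{Lemma_App_subalg} (closure of $A''$ under the operations) and Lemma~\ref{SankaTheo7.6} (which forces the involution identity on $A''$).
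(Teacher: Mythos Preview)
Your approach is essentially the paper's own: the paper states that the theorem ``is immediate from Lemma~\ref{Lemma_App_subalg}'' in view of Definition~\ref{definit_DM_KL_BA}, and your translation principle is precisely the mechanism that makes this immediacy explicit. Your argument for (a) via Lemma~\ref{SankaTheo7.6} is correct and more informative than what the paper spells out.

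One point deserves attention. Your parenthetical in (c) --- that the equation is ``to be read in conjunction with (b), since by definition a Kleene algebra is already a De Morgan algebra'' --- actually weakens the claim. As written, (c) and (d) are \emph{standalone} biconditionals: the single identity in (c) is asserted to be equivalent, by itself, to $\mathbf{A''}$ being Kleene (and similarly for (d) and Boolean). Using only Definition~\ref{definit_DM_KL_BA} this would not follow, since there $\C{KL}$ and $\C{BA}$ are defined as subvarieties of $\C{DM}$, not of $\C{I}_{2,0}$. To close the gap you should invoke part (a) together with the characterization theorems of Section~\ref{S:4}: in $\C{I}_{2,0}$ the identity $(x\to x)\to(y\to y)\approx y\to y$ alone forces De Morgan (hence Kleene), and $x\to x\approx 0'$ alone (equivalently $x\land x'\approx 0$) forces Boolean. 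With those results in hand, your translation principle yields (c) and (d) as stated. The paper's one-line proof is equally terse on this point, but since the theorem is placed after Section~\ref{S:4}, those characterizations are available.
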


\section{The varieties $\C{SCP} $ and $\C{MC}$}\label{S:6}

In this section, we describe the relationship between $\C{SCP} $ and $\C{MC}$.

\begin{lemma}\label{L:SCP}
	Let $\mathbf{A} \in \C{SCP}$.  Then $\mathbf{A}$ satisfies:
	\[0' \to [(x \to y) \to z] \approx (x \to y) \to z.\]
\end{lemma}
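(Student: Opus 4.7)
The plan is to show that on terms of the form $(x \to y) \to z$, prefixing with $0' \to (-)$ acts as the identity, and to derive this from the one-step observation that $0' \to w \approx w''$ in any $\C{SCP}$-algebra, combined with the fact (established earlier in Lemma \ref{Lemma_300315_01}(\ref{060415_01})) that every element of the form $(x \to y) \to z$ is already its own double negation in any $\C{I}$-algebra.

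Concretely, first I would compute $0' \to w$ for an arbitrary $w$ using (SCP): setting the first variable to $0'$ and the second to $w$ in $x \to y \approx y' \to x'$ gives
\[
0' \to w \approx w' \to 0'' \approx w' \to 0 \approx w'',
\]
where the middle step uses axiom (I$_0$). Then I would substitute $w := (x \to y) \to z$, so that
\[
0' \to [(x \to y) \to z] \approx [(x \to y) \to z]''.
\]
Finally, by Lemma \ref{Lemma_300315_01}(\ref{060415_01}) we have $[(x \to y) \to z]'' \approx (x \to y) \to z$, and chaining these equalities yields the claim.

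There is really no obstacle here: the substantive content sits in the previously established identity $(x \to y) \to z \approx [(x \to y) \to z]''$ of Lemma \ref{Lemma_300315_01}(\ref{060415_01}), which holds throughout $\C{I}$ (not merely $\C{I}_{2,0}$); the only role of the $\C{SCP}$ hypothesis is to let us rewrite $0' \to w$ as $w''$, and the role of (I$_0$) is to collapse the stray $0''$ into $0$. The lemma can thus be stated and proved in two lines.
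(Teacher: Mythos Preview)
Your proof is correct. The approach differs slightly from the paper's: you first establish the general $\C{SCP}$-identity $0' \to w \approx w''$ (via (SCP) and (I$_0$)) and then invoke Lemma~\ref{Lemma_300315_01}(\ref{060415_01}) to collapse the double negation on $(x \to y) \to z$. The paper instead uses axiom~(I) to write $(x \to y) \to z$ directly as a single negation $u'$ (with $u = (z' \to x) \to (y \to z)'$), then applies (SCP) to $u \to 0$ to obtain $u' \approx 0' \to u'$, and finally rewinds~(I). The paper's argument is thus self-contained (no appeal to Lemma~\ref{Lemma_300315_01}), while yours isolates the reusable fact $0' \to w \approx w''$ in $\C{SCP}$; both are equally short and valid.
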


\begin{proof} 
\quad	$(x \to y) \to z \overset{(I)}{\approx}  [(x' \to y) \to (z \to x)']' $\\ 
	$\overset{(SCP)}{\approx}  0' \to [(x' \to y) \to (z \to x)']'   
	\overset{(I)}{ \approx}  0' \to [(x \to y) \to z] $. 
\end{proof}

\begin{theorem} \label{SCP_subset_MC}
$\C{SCP}  \subset \C{MC}$.	
\end{theorem}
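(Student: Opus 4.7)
The plan is to show the two halves of the strict containment separately. For the inclusion $\C{SCP} \subseteq \C{MC}$, I would unwind both sides of the desired identity $x \land y \approx y \land x$ using the definition $(M)$ and then apply $(SCP)$ together with Lemma \ref{Lemma_300315_01}(\ref{300315_02}). Concretely, by $(SCP)$,
\[
x \to y' \approx (y')' \to x' \approx y'' \to x',
\]
so, taking negations,
\[
(x \to y')' \approx (y'' \to x')'.
\]
Now apply Lemma \ref{Lemma_300315_01}(\ref{300315_02}) (which says $(u \to v)' \approx (u'' \to v)'$ in all of $\C{I}$, hence in particular when $u = y$) in the reverse direction to conclude
\[
(y'' \to x')' \approx (y \to x')'.
\]
Chaining these gives $(x \to y')' \approx (y \to x')'$, which by $(M)$ is exactly $x \land y \approx y \land x$, as required. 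This is the only computational content of the forward inclusion, and I do not expect any serious obstacle here.

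For the strict part $\C{SCP} \neq \C{MC}$, I would exhibit a specific algebra in $\C{I}$ that satisfies $(MC)$ but fails $(SCP)$. A natural candidate is a small finite implicator groupoid — the most economical choice is to try one of the two- or three-element algebras already sitting in $\C{I}$ (such as $\mathbf{2}_z$ recalled later in Section~\ref{S:11}, or a slight variant), or any easily checked finite table. The task is to verify commutativity of $\land$ on all pairs (a routine finite check) while displaying one pair $(a,b)$ for which $a \to b \neq b' \to a'$. I expect the authors to present such a concrete counterexample; the main bookkeeping burden is simply tabulating $\to$ and then computing both $\land$ and the two sides of $(SCP)$ on all inputs.

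The overall strategy is therefore: (i) a three-line equational derivation using $(SCP)$ and Lemma \ref{Lemma_300315_01}(\ref{300315_02}) for the inclusion, and (ii) a finite example to witness strictness. The only real obstacle is producing the witnessing algebra for (ii); once it is in hand, verification is mechanical.
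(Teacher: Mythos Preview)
Your inclusion argument is correct and in fact shorter than the paper's. The paper derives $y \land x \approx x \land y$ through a longer chain that applies (SCP) several times together with the auxiliary Lemma~\ref{L:SCP} (the identity $0' \to [(x\to y)\to z] \approx (x\to y)\to z$) and an instance of (I); your route via a single use of (SCP) followed by Lemma~\ref{Lemma_300315_01}(\ref{300315_02}) bypasses Lemma~\ref{L:SCP} entirely and is more economical.

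For strictness your overall plan is right, but the candidate you name does not work: in $\mathbf{2}_z$ every value of $\to$ is $0$, so both sides of (SCP) are identically $0$ and (SCP) holds. In fact all three $2$-element $\C{I}$-groupoids $\mathbf{2}_z$, $\mathbf{2}_s$, $\mathbf{2}_b$ satisfy (SCP), so no $2$-element algebra can witness the strictness. The paper supplies a genuinely new $3$-element table (Figure~\ref{fig1}), with $\to$ equal to $0$ everywhere except $2\to 2 = 1$: then $\land$ is identically $0$ (so (MC) holds trivially), while $2\to 2 = 1 \ne 0 = 2'\to 2'$, so (SCP) fails at $(2,2)$. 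You would need to manufacture such an example yourself; the algebras ``already sitting in $\C{I}$'' from Section~\ref{S:11} do not suffice.
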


\begin{proof}  \quad	$y \land x	\approx  (y \to x')'       
	\overset{(SCP)}{\approx}  (x'' \to y')'   
  \approx   [(x' \to 0'') \to y']'  $  
         \begin{align*}
\noindent &\overset{(SCP)}{\approx}  [(0' \to x) \to y']'     
	 \approx  [(0' \to y) \to (0' \to x)']'    
 \approx    [(0' \to x)'' \to (0' \to y)']' \\   
&\approx   [\{0' \to (0' \to x)\} \to (0' \to y)']'     
\overset{\ref{L:SCP}}{ \approx}    [(0' \to x) \to (0' \to y)']'  \\  
&\overset{(SCP)}{\approx}    [(0' \to x) \to (y' \to 0'')']'     
	\approx     [(0' \to x) \to (y' \to 0)']' \\	
&\overset{(I)}{\approx}  (x \to y')'      
	 \approx    x \land y. 
	  \end{align*}

The example in Figure~\ref{fig1} shows (at 2,2) that the inclusion is proper.
\begin{figure}[ht] 
\begin{center} 
\begin{tabular}{r|rrr}
$\to$: & $0$ & $1$ & $2$\\
\hline
    $0$ & $0$ & $0$ & $0$ \\
    $1$ & $0$ & $0$ & $0$ \\
    $2$ & $0$ & $0$ & $1$
\end{tabular} \hspace{.5cm}
\begin{tabular}{r|rrr}
$'$: & $0$ & $1$ & $2$\\
\hline
   & $0$ & $0$ & $0$
\end{tabular} \hspace{.5cm}
\begin{tabular}{r|rrr}
$\land$: & $0$ & $1$ & $2$\\
\hline
    $0$ & $0$ & $0$ & $0$ \\
    $1$ & $0$ & $0$ & $0$ \\
    $2$ & $0$ & $0$ & $0$
\end{tabular} 
\end{center}
\caption{The Example for Theorem~\ref{SCP_subset_MC}}\label{fig1}
\end{figure}
\end{proof}

A much stronger version of Theorem \ref{SCP_subset_MC} holds in $\C{I}_{2,0}$.  Indeed,   
$\C{SCP}$ and $\C{MC}$ will coincide in $\C{I}_{2,0}$.  

\begin{theorem} \label{Theo_300615_01}
	$\C{MC}  \cap  \C{I}_{2,0}  =  \C{SCP}  \cap  \C{I}_{2,0}$.
\end{theorem}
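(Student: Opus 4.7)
The plan is to prove the two inclusions separately. The inclusion $\C{SCP} \cap \C{I}_{2,0} \subseteq \C{MC} \cap \C{I}_{2,0}$ is already handed to us by Theorem \ref{SCP_subset_MC}, since $\C{SCP} \subset \C{MC}$ in the whole variety $\C{I}$. So the only real content is the reverse inclusion $\C{MC} \cap \C{I}_{2,0} \subseteq \C{SCP} \cap \C{I}_{2,0}$.

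Assume $\mathbf{A} \in \C{MC} \cap \C{I}_{2,0}$. My plan is to unfold the meet axiom (MC) using the definition (M) of $\land$, and then exploit the involutive negation $x'' \approx x$ to strip off primes. Writing (MC) via (M) gives $(x \to y')' \approx (y \to x')'$. Applying $'$ to both sides and then invoking $x'' \approx x$ on each side transforms this into the contraposition identity
\[
x \to y' \approx y \to x',
\]
which is just (CP). This is the key conversion step.

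From (CP), the move to (SCP) is a one-line substitution: replace $y$ by $y'$, getting $x \to y'' \approx y' \to x'$, and then apply $x'' \approx x$ once more on the left to obtain $x \to y \approx y' \to x'$, which is exactly (SCP). This shows $\mathbf{A} \in \C{SCP}$, completing the inclusion.

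I do not expect any real obstacle here; the proof is a short chain relying only on Definition of $\land$, the identity $x'' \approx x$ available in $\C{I}_{2,0}$, and the ability to freely apply $'$ to both sides of an equation. The only thing to be mindful of is that the step from $(x \to y')' \approx (y \to x')'$ to $x \to y' \approx y \to x'$ needs $''$ to be the identity on arbitrary elements of the form $u \to v$, which is guaranteed since (I$_{2,0}$) asserts $x'' \approx x$ universally (and, if desired, can be reinforced by Lemma \ref{Lemma_300315_01}(\ref{060415_01})).
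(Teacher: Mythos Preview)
Your proof is correct and follows essentially the same approach as the paper. The paper's argument is the one-line chain $x \to y \approx (x \to y'')'' \approx (x \land y')' \approx (y' \land x)' \approx (y' \to x')'' \approx y' \to x'$, which is just your derivation with the substitution $y \mapsto y'$ folded in from the start rather than performed after first isolating (CP); the underlying manipulations using (M), (MC), and $x'' \approx x$ are identical.
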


\begin{proof}
Let $\mathbf A \in \C{MC}\  \cap \ \C{I}_{2,0}$. Observe that $x \to y \approx (x \to y'')'' \approx  (x \land y')' \approx  (y' \land x)' \approx  (y' \to x')'' \approx  y' \to x'$. Hence $\C{MC}\  \cap \ \C{I}_{2,0}\  \subseteq \ \C{SCP} \  \cap \ \C{I}_{2,0}$.  The other inclusion is proved in the preceding theorem.
\end{proof}

\section{The algebra $\mathbf{A^{mj}}$\label{S:7} as a lattice} \label{lattice}

In this section we present necessary and sufficient conditions on algebras $\mathbf{A} \in \C{I}$ under which the {\it derived} algebra
$\mathbf{A^{mj}} = \langle A, \land, \lor, 0 \rangle$ is a lattice.
\begin{lemma} \label{130515_01}
	Let $\mathbf{A} \in \C{I}_{2,0}$. If $\mathbf{A}  \models x \land (x \lor y) \approx x$ then $\mathbf{A}  \models (x \to y) \to x \approx x$.
\end{lemma}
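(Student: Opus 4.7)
The plan is to reduce the hypothesis to a purely implicational identity, then massage it into exactly what Lemma~\ref{general_properties2}(7) needs.

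First, I would unpack everything in terms of $\to$ using $(M)$ and $(J)$ together with $(I_{2,0})$. Since $y'' \approx y$, the join simplifies in $\C{I}_{2,0}$:
\[
x \lor y \;=\; (x' \land y')' \;=\; ((x' \to y'')')' \;\approx\; ((x' \to y)')' \;\approx\; x' \to y.
\]
Hence the absorption hypothesis $x \land (x \lor y) \approx x$ becomes $(x \to (x' \to y)')' \approx x$, and applying $'$ (together with $(I_{2,0})$) this is equivalent to the identity
\[
\text{(H)}\qquad x \to (x' \to y)' \;\approx\; x'.
\]

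Next I want to convert the ``$x' \to y$'' that occurs inside (H) into ``$y \to x$''. Here I would apply Lemma~\ref{general_properties2}(21), namely $x \to (y \to x') \approx y \to x'$, with the substitution $x \mapsto x'$; using $x'' \approx x$ this yields
\[
\text{($\ast$)}\qquad x' \to (y \to x) \;\approx\; y \to x.
\]
Now substitute $y \mapsto y \to x$ in (H) to get
\[
x \to \bigl(x' \to (y \to x)\bigr)' \;\approx\; x',
\]
and then rewrite the inner term by means of ($\ast$) to obtain
\[
x \to (y \to x)' \;\approx\; x'.
\]

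Finally I would invoke Lemma~\ref{general_properties2}(7), which gives $(x \to y) \to x \approx [x \to (y \to x)']'$; combining this with the identity just derived, together with $(I_{2,0})$, yields $(x \to y) \to x \approx (x')' \approx x$, as required.

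The only nontrivial step is spotting the right substitution in Lemma~\ref{general_properties2}(21) to produce ($\ast$); once that ``$x' \to$ is idempotent on terms ending in $x$'' identity is in hand, everything else is routine rewriting. So the main (mild) obstacle is recognizing which of the many identities in Lemma~\ref{general_properties2} bridges the gap between the form $(x' \to y)$ inside the hypothesis and the form $(y \to x)$ needed by item~(7).
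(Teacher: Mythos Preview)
Your argument is correct. However, it differs from the paper's route in an interesting way. The paper does \emph{not} rewrite the full hypothesis as an implicational identity and then massage it into the shape required by Lemma~\ref{general_properties2}(\ref{291014_09}). Instead it first applies Lemma~\ref{general_properties2}(\ref{291014_10}) to turn $(x \to y) \to x$ into $(0 \to y) \to x$, then rewrites the term $0 \to y$ as $0' \land (0' \lor y)$ and invokes the absorption hypothesis \emph{only at the single instance} $x = 0'$, collapsing this to $0'$ and finishing with $0' \to x \approx x$ from Lemma~\ref{general_properties_equiv}(\ref{TXX}). So the paper's proof actually shows that the weaker hypothesis $0' \land (0' \lor y) \approx 0'$ already suffices, whereas your argument, by substituting $y \mapsto y \to x$ in the hypothesis, genuinely uses its full two-variable strength. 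On the other hand, your approach is arguably more transparent: once the hypothesis is decoded as $x \to (x' \to y)' \approx x'$, the combination of items (\ref{281114_01}) and (\ref{291014_09}) of Lemma~\ref{general_properties2} gives the conclusion in two clean moves, without the somewhat opaque detour through $0' \land (0' \lor y)$.
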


\begin{proof}
\quad $(x \to y) \to x \overset{\ref{general_properties2} (\ref{291014_10})} \approx  (0 \to y) \to x   
\approx (0 \to y)'' \to x  $   
\begin{align*}
&\overset{\ref{general_properties_equiv} (\ref{TXX})} {\approx}  [0' \to (0 \to y)']' \to x       
 \approx  (0' \land (0 \to y)) \to x                        
\approx  (0' \land (0'' \to y'')'') \to x \quad \mbox{} \\
& \approx  (0' \land (0'' \land y')') \to x      
\approx  (0' \land (0' \lor y)) \to x \quad   
\overset{hyp}{ \approx} 0' \to x     
\overset{\ref{general_properties_equiv} (\ref{TXX})}{ \approx}  x,
\end{align*}
 which completes the proof.	
\end{proof}

\begin{lemma}
	Let $\mathbf{A}  \in \C{I} $. If $\mathbf{A}  \models x \land (x \lor y) \approx x$ then $\mathbf{A}  \models (x \to y) \to x \approx x$.
\end{lemma}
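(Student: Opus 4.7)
The plan is a short two-step reduction. The previous lemma already does all the real work provided we are in $\C{I}_{2,0}$, so the only thing to prove is that the absorption hypothesis by itself forces $\mathbf{A} \in \C{I}_{2,0}$. Once we have $x'' \approx x$, Lemma \ref{130515_01} applies verbatim and we are done.

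First, I would invoke Lemma \ref{Lemma_300315_01}(\ref{310315_03}), which says that any meet in an $\C{I}$-groupoid is fixed under double complementation: $(u \land v)'' \approx u \land v$ for all $u,v$. Specializing to $u := x$ and $v := x \lor y$, this gives
\[
[x \land (x \lor y)]'' \approx x \land (x \lor y).
\]
Combining this with the standing hypothesis $x \land (x \lor y) \approx x$, both sides become $x$, and we conclude $x'' \approx x$. Hence $\mathbf{A} \in \C{I}_{2,0}$.

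Second, since the absorption identity $x \land (x \lor y) \approx x$ continues to hold in $\mathbf{A}$ and now $\mathbf{A} \in \C{I}_{2,0}$, Lemma \ref{130515_01} applies directly and yields $(x \to y) \to x \approx x$.

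There is no substantive obstacle here: the whole proof rests on the observation that meets always ``live in $A''$'', so that if a meet equals an arbitrary element $x$, that element must itself be fixed by double complementation. This is precisely what lets us bootstrap Lemma \ref{130515_01} from $\C{I}_{2,0}$ up to all of $\C{I}$.
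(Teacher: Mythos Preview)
Your proof is correct and is actually a bit cleaner than the paper's argument, though the two share the same key ingredient. You observe at the outset that Lemma~\ref{Lemma_300315_01}(\ref{310315_03}) combined with the absorption hypothesis immediately forces $x'' \approx x$, placing $\mathbf{A}$ itself in $\C{I}_{2,0}$; Lemma~\ref{130515_01} then finishes the job in one stroke. The paper instead avoids proving $\mathbf{A} \in \C{I}_{2,0}$ directly: it first rewrites $(a \to b) \to a$ as $[(a'' \to b'') \to a'']''$, passes to the subalgebra $\mathbf{A}''$ (which is always in $\C{I}_{2,0}$ by Theorem~\ref{Lemma_App_in_I20}), applies Lemma~\ref{130515_01} inside $\mathbf{A}''$ to obtain $(a \to b) \to a = a''$, and only then uses Lemma~\ref{Lemma_300315_01}(\ref{310315_03}) together with the hypothesis to conclude $a'' = a$. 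So the paper defers the ``$a'' = a$'' step to the very end, whereas you front-load it; your route is shorter and makes the dependence on Lemma~\ref{130515_01} more transparent, while the paper's route illustrates the general Glivenko-style technique of reducing questions in $\C{I}$ to the subalgebra $\mathbf{A}''$.
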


\begin{proof}
	Let $a,b \in A$. Then, using (2) and (3) of  Lemma \ref{Lemma_300315_01} and Lemma \ref{SankaLemma7.5}, we have

	$(a \to b) \to a
		 =  [(a \to b) \to a]''   
	 =  [(a'' \to b'') \to a'']'' .$   
 Since $\mathbf{A}''$ is a subalgebra of $\mathbf{A} $ (see Lemma \ref{Lemma_App_subalg}),  
	we have that $\mathbf{A} '' \models x \land (x \lor y) \approx x$. Hence, using $a'',b'' \in A''$ we conclude that, by Lemma \ref{130515_01}, $(a \to b) \to a = [a'']'' = a''$.
	So, $(a \to b) \to a = a'' = [a \land (a \lor b)]'' = a \land (a \lor b) = a$ by   
	hypothesis 
	and
	Lemma \ref{Lemma_300315_01}. 
			
\end{proof}

\begin{theorem}
	The following are equivalent in  $\mathbf{A}  \in \C{I}$:
	\begin{enumerate}[{\rm(1)}] 
		\item
		$\mathbf{A^{mj}}$ is a lattice,
		\item
		Absorption law holds in  $\mathbf{A^{mj}}$,
		\item
		$(x \to y) \to x \approx x$ holds in $\mathbf{A}$,
		\item
		$\mathbf{A}$ is a De Morgan algebra.
	\end{enumerate}
\end{theorem}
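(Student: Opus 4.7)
My plan is to prove the cycle $(1) \Rightarrow (2) \Rightarrow (3) \Rightarrow (4) \Rightarrow (1)$, since each link is either trivial or available from a result already in hand.

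The implication $(1) \Rightarrow (2)$ is immediate: every lattice satisfies both absorption laws. For $(2) \Rightarrow (3)$ I would just invoke the lemma stated immediately before the theorem, which asserts precisely that the absorption identity $x \land (x \lor y) \approx x$ (valid in $\mathbf{A^{mj}}$ whenever $\mathbf{A^{mj}}$ satisfies absorption) forces $(x \to y) \to x \approx x$ in $\mathbf{A}$. For $(3) \Rightarrow (4)$ there is nothing to do: Definition \ref{definit_DM_KL_BA} declares $\mathbf{A} \in \C{I}$ to be a De Morgan algebra exactly when axiom (DM) holds, and (DM) is precisely identity (3).

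The only step requiring content is $(4) \Rightarrow (1)$. Here I would recall that $\C{DM}$ was introduced in \cite{sankappanavarMorgan2012} as an equational term-equivalent presentation of the classical variety of De Morgan algebras: once $\mathbf{A}$ satisfies (I), (I$_0$) and (DM), the derived operations $x \land y := (x \to y')'$ and $x \lor y := (x' \land y')'$ of (M) and (J) recover the underlying bounded distributive lattice of the De Morgan algebra, with $0$ as its bottom. Consequently $\mathbf{A^{mj}} = \langle A, \land, \lor, 0 \rangle$ is (the bounded-lattice reduct of) that distributive lattice, and in particular it is a lattice.

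The main obstacle, and essentially the only point where one must be careful, is this last step, because it depends on the precise characterization of $\C{DM}$ coming from \cite{sankappanavarMorgan2012}; all other implications are either tautological or one-line citations of the preceding lemma and Definition \ref{definit_DM_KL_BA}. If a fully self-contained argument were desired, one could instead verify directly inside $\C{I} + (\mathrm{DM})$ that $\land$ is idempotent, commutative, and associative and that $x \land (x \lor y) \approx x$ and $x \lor (x \land y) \approx x$ hold, using the identities collected in Lemma \ref{general_properties2} and Lemma \ref{Lemma_300315_01}; but quoting the term-equivalence result from \cite{sankappanavarMorgan2012} is more economical.
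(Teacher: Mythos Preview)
Your proposal is correct and matches the paper's own proof almost verbatim: the paper also argues $(1)\Rightarrow(2)$ trivially, $(2)\Rightarrow(3)$ via the preceding lemma, $(3)\Rightarrow(4)$ by Definition~\ref{definit_DM_KL_BA}, and $(4)\Rightarrow(1)$ by citing \cite{sankappanavarMorgan2012}. Your additional remark about how one might make $(4)\Rightarrow(1)$ self-contained is extra commentary but not a departure from the paper's approach.
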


\begin{proof}
(1) $\Rightarrow$ (2) is trivial,  (2) $\Rightarrow$ (3) follows from the preceding lemma, and  (3) $\Rightarrow$ (4) follows from Definition 2.2, while (4) $\Rightarrow$ (1) was proved in 
\cite{sankappanavarMorgan2012}.
\end{proof}

Here is another interesting (surprising to us) characterization of De Morgan algebras.
But first we need a lemma.

\begin{lemma} \label{L7.1}
	Let $\mathbf{A} \in \C{I}_{2,0}$ such that
	$\mathbf{A} \models x \land 0 \approx 0$.
Then $\mathbf{A} \models x \to 0' \approx 0'$.  
\end{lemma}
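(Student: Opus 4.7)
The plan is to unfold the definitions and exploit the hypothesis $x'' \approx x$ of $\C{I}_{2,0}$. By definition (M), we have $x \land 0 := (x \to 0')'$, so the hypothesis $x \land 0 \approx 0$ translates immediately into the identity $(x \to 0')' \approx 0$.

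Next, I would apply the unary operation ${}'$ to both sides to obtain $(x \to 0')'' \approx 0''$. Since $\mathbf{A} \in \C{I}_{2,0}$, we have $(x \to 0')'' \approx x \to 0'$ on the left side and $0'' \approx 0$ on the right (or directly $0' \approx 0''' \approx 0'$, whichever is cleaner). Actually, the cleanest route is: from $(x \to 0')' \approx 0$, applying ${}'$ gives $(x \to 0')'' \approx 0'$, and then the $\C{I}_{2,0}$ identity collapses the left side to $x \to 0'$, yielding $x \to 0' \approx 0'$ as required.

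There is essentially no obstacle here; the proof is a direct two-line manipulation using only the definition of $\land$ and the defining identity of $\C{I}_{2,0}$. The only thing to be careful about is choosing the right side on which to apply ${}'$ and remembering that $0'' \approx 0$ holds in $\C{I}$ (axiom (I$_0$)), which makes the reindexing of primes straightforward.
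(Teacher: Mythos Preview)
Your proof is correct and is essentially identical to the paper's: both use the definition $x \land 0 = (x \to 0')'$ together with the $\C{I}_{2,0}$ identity $x'' \approx x$ to pass from the hypothesis $(x \to 0')' \approx 0$ to $x \to 0' \approx 0'$. The paper just writes the chain starting from $x \to 0'$ and ending at $(x \land 0)'$, whereas you start from the hypothesis and apply $'$, but the argument is the same.
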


\begin{proof}  Since $\mathbf{A} \in \C{I}_{2,0}$, we have                  
		$x \to 0'    \approx  (x \to 0')''      
		 \approx    (x \land 0)' $, implying   
	$x \to 0'  \approx   0' $,  
	since $\mathbf{A} \models x \land 0 \approx 0$.  
\end{proof}

Now we are ready to give the new characterization of De Morgan algebras.
\begin{theorem}
	Let $\mathbf{A} \in \C{I}$.  Then the following are equivalent:
	\begin{enumerate}[{\rm (1)}] 
		\item
		\begin{enumerate}[{\rm(a)}] 
			\item
			$\mathbf{A} \models x'' \approx x$,
			\item
			$\mathbf{A} \models x \land 0 \approx 0$,
		\end{enumerate}
		\item 
		$\mathbf{A} \models (0 \to x) \to y \approx y$,	
		\item
		$\mathbf{A}$ is a De Morgan algebra.
	\end{enumerate}
\end{theorem}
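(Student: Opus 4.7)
The strategy is to use Lemma \ref{SankaTheorem5.1}, which already gives the equivalence (2) $\Leftrightarrow$ (3) for free, and to reduce the real work to proving the equivalence (1) $\Leftrightarrow$ (2). Both directions of (1) $\Leftrightarrow$ (2) turn out to be short, because under hypothesis (1)(a) we have $\mathbf{A} \in \C{I}_{2,0}$ and all the auxiliary lemmas (\ref{general_properties_equiv}, \ref{general_properties}, \ref{L7.1}) become available.

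For the direction (1) $\Rightarrow$ (2), the plan is to observe that since (a) gives $\mathbf{A} \in \C{I}_{2,0}$, Lemma \ref{L7.1} applies to the hypothesis (b) and yields $x \to 0' \approx 0'$. Substituting $x'$ for $x$ I obtain $x' \to 0' \approx 0'$, and then Lemma \ref{general_properties}(a) rewrites the left-hand side as $0 \to x$, giving the key collapse $0 \to x \approx 0'$. Feeding this into $(0 \to x) \to y$ reduces the term to $0' \to y$, which equals $y$ by Lemma \ref{general_properties_equiv}(a) (whose equivalence with (a) is what lets us use it).

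For (2) $\Rightarrow$ (1), the plan is two specializations of the identity in (2). Setting $x := 0$ yields $0' \to y \approx y$, which by Lemma \ref{general_properties_equiv} is equivalent to (a), establishing $\mathbf{A} \in \C{I}_{2,0}$. Setting $y := 0$ yields $(0 \to x)' \approx 0$ for every $x$; applying this with $x'$ in place of $x$ gives $(0 \to x')' \approx 0$. Finally, from the definition of $\land$ and Lemma \ref{general_properties}(b) one computes $x \land 0 \approx (x \to 0')' \approx (0 \to x')' \approx 0$, which is (b).

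There is no serious obstacle: the whole argument rests on recognizing that, once (a) is available, Lemma \ref{L7.1} together with Lemma \ref{general_properties}(a) forces $0 \to x$ to collapse to $0'$, and Lemma \ref{SankaTheorem5.1} identifies this exactly with being a De Morgan algebra. The only care needed is to be sure $\mathbf{A} \in \C{I}_{2,0}$ is established \emph{before} invoking Lemmas \ref{general_properties} and \ref{L7.1}, which is why in (2) $\Rightarrow$ (1) I must extract (a) from the $x := 0$ instance first.
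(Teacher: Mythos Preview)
Your proof is correct and follows essentially the same route as the paper. For (1) $\Rightarrow$ (2), both you and the paper use Lemma~\ref{L7.1} together with Lemma~\ref{general_properties} to collapse $0 \to x$ (equivalently $x' \to 0'$) to $0'$, and then invoke Lemma~\ref{general_properties_equiv}(\ref{TXX}); the only difference is the order in which the rewrites are applied. The paper then simply defers the remaining implications to \cite{sankappanavarMorgan2012} (via Lemma~\ref{SankaTheorem5.1} and the known properties of De Morgan algebras), whereas you give a direct self-contained argument for (2) $\Rightarrow$ (1) by specializing $x := 0$ and $y := 0$; this is a nice addition but not a genuinely different strategy.
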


\begin{proof} 
Suppose (1) holds.  Then   $(0 \to x) \to y   \approx  (0 \to x'') \to y  
\overset{\ref{general_properties} (2)}{\approx}   (x' \to 0') \to y  
	                          \approx  0' \to y  
	                          \approx  y $.      
Hence (1) implies (2).  The other implications follow from \cite{sankappanavarMorgan2012}.	
\end{proof}

\section{ Some properties of the varieties $\C{I}$ and  $\C{I}_{2,0} \cap  \C{MC}$}\label{S:8}

In this section we present more properties of  algebras in $\C{I}$ and also some properties of algebras in $\C{I}_{2,0}  \cap  \C{MC}$.

\begin{theorem}\label{T8.1}
	Let  $\mathbf A \in \C{I}$.  Then
	\begin{enumerate}[{\rm(a)}] 
		\item  $\mathbf{A^{mj}}$ satisfies:
		\begin{enumerate}[{\rm(1)}] 
			\item  $ (x  \lor  y)'  \approx x'  \land y'$,
			\item  $ (x \land y)'  \approx  x' \lor  y' $
		\end{enumerate}
		\item The following are equivalent in $\mathbf{A^{mj}}$:
		\begin{enumerate}[{\rm(1)}]  
			\item  $x \land y \approx y \land x$  \quad {\rm 
			(i.e.,
			$\land$-commutative)},
			\item   $x \lor y \approx y \lor x$ \quad {\rm 
			(i.e., 
			$\lor$-commutative)},
		\end{enumerate}
		\item  The following are equivalent in $\mathbf{A^{mj}}$:
		\begin{enumerate}[{\rm(1)}]  
			\item  $x \land (y \lor z) \approx (x \land y) \lor (x \land z)$,
			\item  $x \lor (y \land z) \approx (x \lor y) \land (x \lor z)$.
		\end{enumerate}
	\end{enumerate}
\end{theorem}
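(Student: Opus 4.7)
The plan is to derive all three parts from Lemma~\ref{Lemma_300315_01}(4), (5), (6), which give, respectively, $x \land y \approx (x \land y)''$, $x \lor y \approx (x \lor y)''$, and $x \land y \approx (x' \lor y')'$ for any $\mathbf{A} \in \C{I}$. Although $\mathbf{A}$ need not satisfy $x'' \approx x$, these three identities already provide enough ``double-negation invariance'' to push the classical De~Morgan arguments through.

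For part (a)(1), unfolding the definition of $\lor$ gives $(x \lor y)' \approx (x' \land y')''$, which equals $x' \land y'$ by Lemma~\ref{Lemma_300315_01}(4). For part (a)(2), Lemma~\ref{Lemma_300315_01}(6) rewrites $x \land y$ as $(x' \lor y')'$, so $(x \land y)' \approx (x' \lor y')''$, which equals $x' \lor y'$ by Lemma~\ref{Lemma_300315_01}(5).

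For part (b), the implication (1)~$\Rightarrow$~(2) is immediate from the definition of $\lor$: commutativity of $\land$ yields $x \lor y = (x' \land y')' = (y' \land x')' = y \lor x$. The converse is obtained in the same one-line fashion, but using the alternative expression $x \land y \approx (x' \lor y')'$ from Lemma~\ref{Lemma_300315_01}(6) in place of the definition.

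For part (c), a key preliminary step is to observe that $\land$ and $\lor$ are invariant under double negation in either argument, i.e.\ $x \lor y \approx x'' \lor y''$ and $x \land y \approx x'' \land y''$. Both follow by substituting $x'$ for $x$ and $y'$ for $y$ in the De~Morgan laws of part (a); for example, part (a)(2) becomes $(x' \land y')' \approx x'' \lor y''$, and comparing with the definition $x \lor y := (x' \land y')'$ yields $x \lor y \approx x'' \lor y''$, with the other case handled analogously using Lemma~\ref{Lemma_300315_01}(6). Assuming (c)(1), taking primes of both sides and applying the De~Morgan laws from part (a) gives the identity
\[ x' \lor (y' \land z') \approx (x' \lor y') \land (x' \lor z'). \]
Substituting $x', y', z'$ for the three free variables then yields
\[ x'' \lor (y'' \land z'') \approx (x'' \lor y'') \land (x'' \lor z''), \]
and invoking the double-negation invariance of $\land$ and $\lor$ term by term reduces this to (c)(2). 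The implication (c)(2)~$\Rightarrow$~(c)(1) is entirely symmetric.

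The main obstacle is precisely the absence of the involutive law $x'' \approx x$ in $\C{I}$, so one cannot directly dualize by substituting primed variables as in De~Morgan algebras; the double-negation invariance of $\land$ and $\lor$ derived from part (a) is what repairs this gap.
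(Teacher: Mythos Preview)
Your proof is correct and follows essentially the same approach as the paper: both derive everything from Lemma~\ref{Lemma_300315_01}(4)--(6), with parts (a) and (b) matching almost verbatim. For part (c) the paper computes directly from one side of (2) to the other (unfolding via the definition of $\lor$ and Lemma~\ref{Lemma_300315_01}(6), applying hypothesis (1), then repackaging), whereas you dualize (1) through the De~Morgan laws of (a), substitute primed variables, and remove the double primes via the invariances you set up; these are the same ingredients reorganized, and your ``term by term'' reduction of the left-hand side tacitly uses Lemma~\ref{Lemma_300315_01}(4) once more (to identify $(y\land z)''$ with $y\land z$), which is fine since you cite it up front.
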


\begin{proof}

		(a): (1) $\Rightarrow$ (2):  By Lemma \ref{Lemma_300315_01} (\ref{300315_01}) we have that $(x \lor y)' \approx  (x' \land y')'' \approx  (x' \to y'')''' \approx  [(x \to 0) \to y'']''' \approx  [(x \to 0) \to y'']' \approx   (x' \to y'')' \approx  x' \land y'$.
			
		 (2) $\Rightarrow$ (1):  From Lemma \ref{SankaLemma7.5}  and Lemma \ref{Lemma_300315_01} (\ref{300315_02}), $x' \lor y' \approx  (x'' \land y'')' \approx  (x'' \to y''')'' \approx  (x \to y''')'' \approx  (x \to y')'' \approx  (x \land y)'$.
		
		(b):  	
		From (1) we have that $x \lor y \approx  (x' \land y')' \approx  (y' \land x')' \approx  y \lor x$, implying (y).
		Suppose (2) holds.  Then by Lemma \ref{Lemma_300315_01} (\ref{310315_04}), $x \land y \approx  (x' \lor y')' \approx  (y' \lor x')' \approx  y \land x$, yielding (1).

(c):			
		 (1) $\Rightarrow$ (2):
\quad $x \lor (y \land z)   \overset{\text{def of } \lor}{\approx}  [x' \land (y \land z)']'  
\overset{\ref{Lemma_300315_01} (\ref{310315_04})}{\approx}  [x' \land (y' \lor z')'']' $ 
\begin{align*}
&\overset{\ref{Lemma_300315_01} (\ref{310315_05})} { \approx}  [x' \land (y' \lor z')]'   
\overset{hyp}{\approx}  [(x' \land y') \lor (x' \land z')]'  
\overset{\ref{Lemma_300315_01} (\ref{310315_03})} {\approx}  [(x' \land y')'' \lor (x' \land z')'']' \\ 
&\overset{\text{def of } \lor}{\approx}  [(x \lor y)' \lor (x \lor z)']'   
\overset{\ref{Lemma_300315_01} (\ref{310315_04})}{\approx}  (x \lor y) \land (x \lor z).  
\end{align*}

(d):  (2) $\Rightarrow$ (1):  
\quad		$x \land (y \lor z)	 \overset{\ref{Lemma_300315_01} (\ref{310315_04})} {\approx}  [x' \lor (y \lor z)']'   
\overset{\text{def of } \lor}{\approx}  [x' \lor (y' \land z')'']' $  
\begin{align*}
&\overset{\ref{Lemma_300315_01} (\ref{310315_03})} {\approx}  [x' \lor (y' \land z')]'  
\overset{hyp}{\approx}  [(x' \lor y') \land (x' \lor z')]'  
\overset{\ref{Lemma_300315_01} (\ref{310315_05})}{\approx}  [(x' \lor y')'' \land (x' \lor z')'']' \\  
&\overset{\ref{Lemma_300315_01} (\ref{310315_04})}{\approx}  [(x \land y)' \land (x \land z)']'   
\overset{\text{def of } \lor}{\approx}  (x \land y) \lor (x \land z).  
\end{align*}	
This completes the proof.
\end{proof}

\begin{theorem} Let $\mathbf{A} \in \C{I}$.  Then the following hold in $\mathbf{A^{mj}}$:
	\begin{enumerate}[{\rm(a)}]  
		\item 	$(x \land y) \land (x \lor y) \approx x \land  y$, \label{080415_04}
		\item    $(x \lor y)  \lor  (x \land y) \approx x \lor  y$. \label{080415_05}
	\end{enumerate}	
\end{theorem}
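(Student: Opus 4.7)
The plan is to establish (a) by a direct computation and then derive (b) from (a) via a de Morgan duality argument. Both computations rely on Theorem 8.1(a), Lemma 3.7 (parts 4 and 5 to handle double-negation stability), and the fundamental identity (I).

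For (a), the first step is to unfold by the definition (M):
$(x \land y) \land (x \lor y) \approx ((x \land y) \to (x \lor y)')'$.
By Theorem 8.1(a)(1), $(x \lor y)' \approx x' \land y'$, so the goal reduces to showing $((x \land y) \to (x' \land y'))' \approx x \land y$. Expanding the inner $\land$ via (M), the expression becomes $((x \to y')' \to (x' \to y'')')'$. Now apply identity (I) to this expression (treating it as $(u \to v) \to w$ with $u = x \to y'$, $v = 0$, $w = (x' \to y'')'$); use the fact that $(x' \to y'')$ is of the form $(u \to v) \to w$ (with $u = x$, $v = 0$, $w = y''$), so by Lemma 3.7(2) we have $(x' \to y'')'' \approx x' \to y''$. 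Combined with Lemma 3.7 parts (1)--(4) for the bookkeeping of double negations, the expression collapses to $(x \to y')' \approx x \land y$, completing (a).

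For (b), apply (a) under the substitution $x \mapsto x'$, $y \mapsto y'$ to obtain
$(x' \land y') \land (x' \lor y') \approx x' \land y'$.
By Theorem 8.1(a)(1), $x' \land y' \approx (x \lor y)'$, and by (a)(2), $x' \lor y' \approx (x \land y)'$; substituting gives $((x \lor y)' \land (x \land y)') \approx (x \lor y)'$. Taking $'$ of both sides: the right-hand side becomes $(x \lor y)''$, which equals $x \lor y$ by Lemma 3.7(5); the left-hand side, by Theorem 8.1(a)(2), becomes $(x \lor y)'' \lor (x \land y)''$, which by Lemma 3.7 parts (4) and (5) simplifies to $(x \lor y) \lor (x \land y)$. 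Hence (b) follows.

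The main obstacle lies in carrying out the reduction in (a): working in $\C{I}$ (without $x'' \approx x$), one must carefully track multiple applications of $'$ and chain several instances of (I). The key insight that makes it work is that $x \land y$ and $x' \land y'$ are both double-negation stable (Lemma 3.7(4)), which lets the nested negations collapse. This is essentially the same phenomenon that allows the derived algebra $\mathbf{A''} \in \C{I}_{2,0}$ (Theorem 3.11(a)) to carry the argument: all intermediate terms belong to $A''$, where the identity reduces to a computation valid in $\C{I}_{2,0}$.
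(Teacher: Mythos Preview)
Your overall strategy matches the paper's: for (a) you reduce to the expression $((x \to y')' \to (x' \to y'')')'$ and recognize that, because the subterms are double-negation stable, the computation effectively lives in $\mathbf{A}'' \in \C{I}_{2,0}$; for (b) you dualize via Theorem~\ref{T8.1}(a) and Lemma~\ref{Lemma_300315_01}(\ref{310315_03}),(\ref{310315_05}), exactly as the paper does.

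The gap is in your execution of (a). You write ``apply identity (I) \dots\ Combined with Lemma~3.7 parts (1)--(4) for the bookkeeping of double negations, the expression collapses to $(x \to y')'$.'' But one application of (I) followed by double-negation cleanup does \emph{not} collapse $((x \to y')' \to (x' \to y'')')'$ to $(x \to y')'$. What is actually needed is the $\C{I}_{2,0}$-identity
\[
(x \to y')' \to (x' \to y)' \approx x \to y',
\]
which is Lemma~\ref{general_properties2}(\ref{080415_03}). This identity is not a one-step consequence of (I); its proof in the appendix goes through items (\ref{080415_02}), (\ref{291014_07}), (\ref{071114_04}), (\ref{071114_01}), and several others---a substantial chain. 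The paper makes this explicit: it rewrites the expression so that the arguments are $x''$ and $y''$ (using Lemma~\ref{Lemma_300315_01}(\ref{300315_02}) and Lemma~\ref{SankaLemma7.5}), observes these lie in $A''$, and then invokes Lemma~\ref{general_properties2}(\ref{080415_03}) inside the $\C{I}_{2,0}$-subalgebra $\mathbf{A}''$. Your final paragraph correctly identifies this as the key mechanism, but your actual argument for (a) asserts the collapse without supplying (or citing) the identity that makes it happen. You should either cite Lemma~\ref{general_properties2}(\ref{080415_03}) explicitly at that step, or carry out the multi-step derivation.
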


\begin{proof} (a):
\quad  $(x \land y) \land (x \lor y) \overset{\text{def of } \lor}{\approx}  (x \land y)  
\overset{\text{def of } \land}{\approx}  (x \to y')' \land (x' \to y'')'' $ 
\begin{align*}
&\overset{\text{def of } \land}{\approx}  [(x \to y')' \to (x' \to y'')''']'  
\overset{\ref{Lemma_300315_01} (\ref{300315_01})}{\approx}  [(x \to y')' \to (x' \to y'')']' \\ 
&\overset{\ref{Lemma_300315_01} (\ref{300315_02})}{\approx}  [(x'' \to y')' \to (x''' \to y'')']' 
\overset{\ref{SankaLemma7.5}}{\approx}  [(x'' \to y''')' \to (x''' \to y'')']'.    
\end{align*}	
Notice that, by Lemma \ref{SankaTheo7.6}, $x'', y'' \in A''$. Hence by Theorem \ref{Lemma_App_in_I20} and Lemma \ref{general_properties2} (\ref{080415_03}), we have that $[(x'' \to y''')' \to (x''' \to y'')']' \approx (x'' \to y''')'$. Then by Lemma \ref{Lemma_300315_01} (\ref{300315_02}) and  Lemma \ref{SankaLemma7.5}, we obtain 
	$(x \land y) \land (x \lor y) \approx  (x \to y')' \approx  x \land y$. 	
	
(b):	
\quad $(x \lor y)  \lor  (x \land y) \overset{\text{def of } \lor}{\approx}  (x' \land y')' \lor (x \land y) $
\begin{align*}
\noindent&\overset{\text{def of } \lor}{\approx} [(x' \land y')'' \land (x \land y)']'   
\overset{\ref{Lemma_300315_01} (\ref{310315_03})}{\approx}  [(x' \land y') \land (x \land y)']' \\ 	
&\overset{\ref{Lemma_300315_01} (\ref{310315_04})}{\approx}  [(x' \land y') \land (x' \lor y')'']'   
	\approx [(x' \land y') \land (x' \lor y')]' 		 
\overset{(\ref{080415_04})}{\approx}  [x' \land y']' 
	\approx  x \lor y,  
\end{align*} 
completing the proof.
\end{proof}

We should note here that the conclusions in the preceding theorem are known to hold in every Birkhoff system (see \cite{harding2012proyective}).

Recall that ${\C MC}$ denotes the subvariety of $\C{I}$ defined by the identity: $x \land y \approx y \land x$.  Next, we consider the variety $\C{I}_{2,0}  \cap  \C{MC}$ and show that it has some interesting properties.

\begin{theorem}\label{T:7.3}
	Let $\mathbf A \in \C{I}_{2,0}  \cap  \C{MC}$.  Then $\mathbf{A^{mj}}$ satisfies:
	\begin{enumerate}[{\rm(a)}]  
		\item  $x \land x \approx x$, \label{010415_01}
		\item  $x \lor x \approx x $,
		\item  $x \lor y \approx y \lor x$,
		\item  $x \land (y  \lor z)\approx (x  \land y) \lor (x \land z)$,  \label{010415_02}
		\item  $x \lor (y  \land z)\approx (x  \lor y) \land (x \lor z)$, 
		\item  $x \land (x \lor y) \approx x \lor (x \land y)$. \label{010415_03} 
	\end{enumerate}
\end{theorem}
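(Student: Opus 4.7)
Items (a)--(c) and (e) reduce immediately to earlier results. (a) is Lemma~\ref{SankaTheorem8.15}; (b) is Theorem~\ref{T3.5}; (c) is the $\land$-commutative $\Rightarrow$ $\lor$-commutative direction of Theorem~\ref{T8.1}(b), which applies since $\C{MC}$ is in force; and (e) is the equivalence with (d) supplied by Theorem~\ref{T8.1}(c). So the substantive work is in (d) and (f).

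For (d), the plan is to convert $x \land (y \lor z)$ into $(x \land y) \lor (x \land z)$ by a chain centered on axiom (I). By Theorem~\ref{Theo_300615_01} I have both $x'' \approx x$ and contraposition $a \to b \approx b' \to a'$ at my disposal. Starting with $x \land (y \lor z) = (x \to (y' \to z)')'$ (using $(y \lor z)' \approx (y' \to z)'$, valid in $\C{I}_{2,0}$), I apply (SCP) to the outermost $\to$ to get $((y' \to z) \to x')'$, and then apply axiom (I) with $p = y'$, $q = z$, $r = x'$ to reshape $(y' \to z) \to x'$ into $[(x \to y') \to (z \to x')']'$. Absorbing the outer prime via $x'' \approx x$ (Lemma~\ref{Lemma_300315_01}(\ref{060415_01})) gives $(x \to y') \to (z \to x')'$. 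Finally, the De Morgan--style identities of $\C{I}_{2,0}$, namely $x \to y' \approx (x \land y)'$, $(z \to x')' \approx x \land z$ (the latter by $\C{MC}$), and $a' \to b \approx a \lor b$, collapse this to $(x \land y) \lor (x \land z)$.

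For (f), the plan is to show that both sides equal $(0 \to y') \to x$. Using $\lor$-commutativity from (c), $x \lor (x \land y) = (x \land y) \lor x = (x \land y)' \to x = (x \to y') \to x$, which Lemma~\ref{general_properties2}(\ref{291014_10}) reduces to $(0 \to y') \to x$. On the other side, $\C{MC}$ gives $x \land (x \lor y) = (x \lor y) \land x = ((x' \to y) \to x')'$; the same Lemma~\ref{general_properties2}(\ref{291014_10}) converts this to $((0 \to y) \to x')'$, Lemma~\ref{general_properties}(\ref{cuasiConmutativeOfImplic2}) rewrites $0 \to y \approx y' \to 0'$, and a final application of Lemma~\ref{general_properties2}(\ref{071114_02}) produces $(0 \to y') \to x$.

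The main obstacle is (d): because $\land$-associativity is \emph{not} among the claimed identities, the naive lattice-style unfolding of $x \land (y \lor z)$ leads in circles. The key insight is the combined use of axiom (I) and (SCP) to reshape the expression into a form to which the De~Morgan dualities of $\C{I}_{2,0}$ apply directly.
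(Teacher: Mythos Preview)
Your proof is correct and follows essentially the same skeleton as the paper: items (a)--(c) and (e) are reduced to earlier results exactly as the paper does, the key step in (d) is the application of axiom (I) to pass between $(y'\to z)\to x'$ and $(x\to y')\to (z\to x')'$, and in (f) both arguments funnel through the intermediate $(0\to y')\to x$. The differences are cosmetic but worth noting. For (d), the paper works from the right-hand side to the left and invokes (MC) directly at two points, whereas you work left-to-right and invoke (SCP) via Theorem~\ref{Theo_300615_01}; since $\C{MC}\cap\C{I}_{2,0}=\C{SCP}\cap\C{I}_{2,0}$, these are equivalent maneuvers. For (f), your argument is genuinely shorter: by using part (c) to commute $x\lor(x\land y)$ up front and the $\C{I}_{2,0}$-identity $a\lor b\approx a'\to b$, you reach $(0\to y')\to x$ in three steps, while the paper unpacks everything from definitions and needs roughly a dozen. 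The paper's longer chain has the minor virtue of not relying on (c) inside the proof of (f), but since (c) is already established this is no real gain.
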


\begin{proof}
	(a) and (b) follow from Theorem \ref{T3.5} and (c) follows from Theorem \ref{T8.1}(2).  We will prove (d) and (f).  Then (e) will follow from (d) and Theorem \ref{T8.1}(3).

(d):	
\quad   $(x  \land y) \lor (x \land z)  \approx  [(x  \land y)' \land (x \land z)' ]'  $ 
\begin{align*}
&\overset{\text{def of } \lor}{\approx}  [(x  \land y)' \to (x \land z)'']'' 
\overset{\ref{Lemma_300315_01} (\ref{310315_03})}{\approx}  [(x  \land y)' \to (x \land z)]'' \\ 
&\overset{hyp}{\approx}  [(x  \land y)' \to (z \land x)]''   
\overset{\text{def of } \land}{\approx}  [(x  \to  y')'' \to (z \to x')']'' \\  
&\overset{\ref{Lemma_300315_01} (\ref{300315_02})}{\approx}  [(x  \to  y') \to (z \to x')']''  
\overset{(I)}{\approx}  [(y' \to z) \to x]' \\  
&\overset{\text{def of } \land}{\approx}  (y' \to z) \land x            
\overset{hyp}{\approx}  x \land (y' \to z)\\    
&\overset{\text{def of } \land}{\approx}  [x \to (y' \to z)']'   
\overset{\ref{SankaLemma7.5}}{\approx}  [x \to (y' \to z'')']'   
\overset{\text{def of } \land}{\approx}  x \land (y' \to z'') \\
&\overset{(\ref{010415_01})}{\approx}  x \land (y' \to z'')''  
\overset{\text{def of } \land}{\approx}  x \land [y' \land z']'  
\overset{\text{def of } \lor}{\approx}  x \land (y \lor z).  
\end{align*}

(f):	
\quad	$x \lor (x \land y) \overset{\text{def of } \lor}{\approx}  [x' \land (x \land y)']'  
\noindent \overset{\text{def of } \land}{\approx} [x' \to (x \land y)'']'' $  \begin{align*}
&\overset{(\ref{010415_01})}{\approx}  [x' \to (x \land y)]   
\overset{hyp}{\approx}  [x' \to (y \land x)]  
\overset{\text{def of } \land}{\approx} [x' \to (y \to x')'] \\ 
&\approx  [(x \to 0) \to (y \to x')']  
\overset{(\ref{010415_01})}{\approx}  [(x'' \to 0) \to (y \to x')'] \\ 
&\overset{(\ref{010415_01})}{\approx}  [(x'' \to 0) \to (y \to x')']''   
\overset{(I)}{\approx}  [(0 \to y) \to x']'  
\overset{\ref{general_properties} (\ref{cuasiConmutativeOfImplic2})}{\approx}  [(y' \to 0') \to x']' \\  
&\overset{\ref{general_properties2} (\ref{071114_02})}{\approx}  (0 \to y') \to x''   
\overset{(\ref{010415_01})}{\approx}  (0 \to y') \to x    
\overset{\ref{general_properties2} (\ref{291014_10})}{\approx}  (x \to y') \to x \\   
&\overset{\ref{general_properties2} (\ref{291014_09})}{\approx}  [x \to (y' \to x)']'   
\overset{ (\ref{010415_01})}{\approx}  [x \to (y' \to x'')']'   
\overset{\text{def of } \land}{\approx}  [x \to (y' \wedge x')]' \\   
&\overset{hyp}{\approx}  [x \to (x' \wedge y')]'   
\overset{\ref{Lemma_300315_01} (\ref{310315_03})}{\approx}  [x \to (x' \wedge y')'']' \\  
&\overset{\text{def of } \lor}{\approx}  [x \to (x \vee y)']'   
	 \approx  x \land (x \vee y).  
\end{align*}
This completes the proof.
\end{proof}

 We should mention here that the absorption identity $x \land (x \lor y) \approx x$  fails in $\C{I}_{2,0} \cap \C{MC}$; so, if $\mathbf{A} \in  \C{I}_{2,0} \cap \C{MC}$, then $\mathbf{A^{mj}}$ is not a lattice.  Indeed, in Section \ref{S:7}, we have already shown that $\mathbf{A^{mj}}$ is a lattice precisely when $\mathbf {A}$ is a De Morgan algebra.

\begin{remark}
	We note here that $\C{I}_{2,0}$ {\rm (= $\C{MID}$)} and $\C{MC}$ are incomparable in the lattice of subvarieties of $\C{I}$ as the 
examples in Figures~\ref{fig2} and~\ref{fig3} show	
\end {remark}
	
\begin{figure}[ht]	

	\begin{center} 	
		\begin{tabular}{r|rr}
			$\to$: & $0$ & $1$\\
			\hline
			$0$ & $0$ & $0$ \\
			$1$ & $0$ & $0$
		\end{tabular} \hspace{.5cm}
		\begin{tabular}{r|rr}
			$'$: & $0$ & $1$\\
			\hline
			& $0$ & $0$
		\end{tabular} \hspace{.5cm}
		\begin{tabular}{r|rr}
			$\land$: & $0$ & $1$\\
			\hline
			$0$ & $0$ & $0$ \\
			$1$ & $0$ & $0$
		\end{tabular}    
\caption{$\C{MC} \not \subseteq \C{MID}$}\label{fig2}
	\end{center}
\end{figure}
\begin{figure}[ht]	
 
	\begin{center} 
	\begin{tabular}{r|rrr}
			$\to$: & $0$ & $1$ & $2$\\
			\hline
			$0$ & $0$ & $1$ & $2$ \\
			$1$ & $1$ & $1$ & $2$ \\
			$2$ & $2$ & $1$ & $2$
		\end{tabular} \hspace{.5cm}
		\begin{tabular}{r|rrr}
			$'$: & $0$ & $1$ & $2$\\
			\hline
			& $0$ & $1$ & $2$
		\end{tabular} \hspace{.5cm}
		\begin{tabular}{r|rrr}
			$\land$: & $0$ & $1$ & $2$\\
			\hline
			$0$ & $0$ & $1$ & $2$ \\
			$1$ & $1$ & $1$ & $2$ \\
			$2$ & $2$ & $1$ & $2$
		\end{tabular}   
	\end{center}
\caption{$\C{MID} \not \subseteq \C{MC}$}\label{fig3}	
\end{figure}

\section{The varieties $\C{Z}$, $\C{C}$, ${\C A}$, and $\C{I}_{3,1}$ }\label{S:9}

In this section our goal is to prove  $\C{Z} \subset \C{C} \subset {\C A} \subset \C{I}_{3,1}$.  For this purpose, we first need to establish the following lemma.

\begin{lemma} \label{properties_commut_zrupoids}
	Let $\mathbf{A} \in \C{C}$. Then $\mathbf A$ satisfies the identities:
	\begin{enumerate}[{\rm(a)}]   
		\item   $0 \approx 0'$, \label{060415_02}
		\item   $x' \approx (x \to x)'$, \label{060415_03}
		\item   $(x \to y)' \approx (x' \to y)' \approx (x' \to y')' \approx (x \to y')'$,  \label{060415_05}
		\item   $x \to (x \to y) \approx y' \to x$. \label{060415_04}
	\end{enumerate}
\end{lemma}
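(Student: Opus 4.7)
The plan is to establish (a)--(d) in order, using axiom (I), commutativity $x \to y \approx y \to x$ (which gives $u' = u \to 0 = 0 \to u$), Lemma \ref{Lemma_300315_01}(2) (every expression of the form $(u \to v) \to w$ equals its own double negation), Lemma \ref{Lemma_300315_01}(3), and Lemma \ref{SankaTheo7.6}. A key intermediate identity $x' \approx x''$ will be extracted right after (a) and used throughout the remaining parts.

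For (a), I specialize (I) twice. Plugging $(x, y, z) := (0', 0, 0)$ into (I), the LHS $(0' \to 0) \to 0$ collapses to $0'$ (since $0' \to 0 = 0 \to 0' = 0'' = 0$), while the RHS simplifies via commutativity and $0 \to u = u'$ to $(0' \to 0')''$, which equals $0' \to 0'$ by Lemma \ref{Lemma_300315_01}(2); so $0' \approx 0' \to 0'$. Plugging $(x, y, z) := (0, 0, 0')$, the LHS is $0' \to 0'$ and the RHS reduces (after $0 \to 0' = 0$) to $(0' \to 0')'$. Concatenating, $0' = 0' \to 0' = (0' \to 0')' = (0')' = 0'' = 0$.

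For the auxiliary $x' \approx x''$, I use (I) in two more ways. With $(x, y) := (0, 0)$ and (a), I obtain $z' \approx (z'' \to z'')'$; applying $'$ and invoking Lemma \ref{Lemma_300315_01}(2) forces $z'' \to z'' \approx z''$, whence $z' \approx z'''$. Separately, (I) applied at $(x, y, z)$ and at $(y, x, z)$ share the same LHS by commutativity, so equating their RHSs (and using Lemma \ref{Lemma_300315_01}(2) to remove the outer $'$'s) produces the rotation identity $(z' \to x) \to (y \to z)' \approx (z' \to y) \to (x \to z)'$; setting $z = 0$ and using (a) gives $x' \to y'' \approx y' \to x''$, and the instance $y = 0$ yields $x'' \approx x'''$. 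Combining, $x' \approx x''' \approx x''$.

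The remaining parts now follow by short rewriting. For (b), (I) at $(y, z) := (x, 0)$ combined with (a) and $x' \approx x''$ gives $(x \to x)' \approx [x' \to x']'$; since $x' \to x' \approx x'$ (the $x' = x''$ instance of $z'' \to z'' \approx z''$), we conclude $(x \to x)' \approx x'$. For (c), the schema $(a \to b)' \approx [a' \to b']'$ (obtained from (I) with $w = 0$ plus $b'' \approx b'$), applied to the four pairs $(a, b) \in \{(x, y), (x', y), (x, y'), (x', y')\}$, collapses all four expressions to $(x' \to y')'$ using $x'' \approx x'$ and $y'' \approx y'$. For (d), (I) at $(x, y, z) := (y, x, x)$ combined with (b) gives $(y \to x) \to x \approx [(x' \to y) \to x']'$, whose LHS is $x \to (x \to y)$ by commutativity; applying (I) to the product $y' \to x = (y \to 0) \to x$ and simplifying with (a) and $x'' \approx x'$ yields the same expression $[(x' \to y) \to x']'$, so $x \to (x \to y) \approx y' \to x$. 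The main obstacle is locating the right specializations of (I) to pin down (a) and the auxiliary $x' \approx x''$; after those, (b)--(d) are pure bookkeeping with commutativity and the cited general $\C{I}$-identities.
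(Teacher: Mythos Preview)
Your argument is correct and takes a somewhat different route from the paper's. For (a), the paper runs a single chain $0 = 0'' = 0' \to 0 = 0 \to 0' = (0' \to 0) \to 0' \overset{(I)}{=} [(0 \to 0') \to (0 \to 0')']' = [0 \to 0']' = 0'$; your pair of specializations of (I) reaches the same conclusion via the intermediate facts $0' \approx 0' \to 0'$ and $0' \to 0' \approx (0' \to 0')'$. The substantive divergence is after (a): you isolate and prove the auxiliary identity $x' \approx x''$ in $\C{C}$ (via the rotation identity and $z'' \to z'' \approx z''$), and from it extract the uniform ``prime--shift'' schema $(a \to b)' \approx (a' \to b')'$, which dispatches (b)--(d) by symmetric rewriting. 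The paper never derives $x' \approx x''$; instead it moves double primes in and out of negated arrows using the general $\C{I}$-identities $(x \to y)' \approx (x'' \to y)'$ and $(x \to y'')' \approx (x \to y)'$ (Lemma~\ref{Lemma_300315_01}(\ref{300315_02}) and Lemma~\ref{SankaLemma7.5}). Your approach is more self-contained and produces the extra fact $x' \approx x''$ for $\C{C}$ as a by-product; the paper's is shorter per item because it leans directly on those already-established double-prime lemmas rather than building a new $\C{C}$-specific tool.
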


\begin{proof}
	
	(\ref{060415_02}):
	 	 First we see that $0 = 0'' = 0' \to 0 = 0 \to 0'$ by (C).  So, $0 = 0 \to 0' = 0'' \to 0' = (0' \to 0) \to 0' = [(0'' \to 0') \to (0 \to 0')']' = [(0 \to 0') \to (0 \to 0')']' = [0 \to 0']' = 0'$.
		
		(\ref{060415_03}):
	$x'	 \approx  x \to 0  
	\overset{(C)}{\approx}  0 \to x  
\overset{(\ref{060415_02})}{ \approx}  (0 \to 0) \to x  
\overset{(I)}{\approx}  [(x' \to 0) \to (0 \to x)']'  
		 \overset{(C)}{\approx} [(x' \to 0) \to (x \to 0)']'  
	          \approx  (x'' \to x'')'  
\overset{\ref{Lemma_300315_01} (\ref{300315_02})}{\approx}  (x \to x'')'  
\overset{\ref{SankaLemma7.5}}{\approx}  (x \to x)'. $

	 	(\ref{060415_05}):
	$	(x \to y)'	 \approx  (x \to y) \to 0 
		 \approx  [(0' \to x) \to (y \to 0)']'  
		 \approx   [(x \to 0') \to (y \to 0)']'  
		 \approx  [(x \to 0') \to y'']'   
\overset{(\ref{060415_02})}{\approx}  [(x \to 0) \to y'']'   
		 \approx  (x' \to y'')'    
\overset{\ref{SankaLemma7.5}}{\approx}  (x' \to y)' .$  
		Hence, it follows that $(x \to y)' \approx (x' \to y')' \approx (x \to y')'$.

           	 (\ref{060415_04}):
		$x \to (x \to y)  \overset{(C)}{\approx}  (x \to y) \to x  
		\overset{(I)}{\approx} [(x' \to x) \to (y \to x)']'  
\overset{(\ref{Lemma_300315_01} (\ref{300315_02})}{\approx}  [(x' \to x)'' \to (y \to x)']'   
		\overset{(\ref{060415_05})}{\approx}   [(x \to x)'' \to (y \to x)']'  
\overset{(\ref{060415_03})}{\approx}	   [x'' \to (y \to x)']'   
		 \approx  [(x' \to 0) \to (y \to x)']'   
		 \approx (0 \to y) \to x   
		 \approx  (y \to 0) \to x   
		 \approx  y' \to x. $ 	
\end{proof}

Our next theorem describes the relationship among the varieties  $\C{Z}$, $\C{C}$, ${\C A}$ and $\C{I}_{3,1}$.

\begin{theorem} \label{Theorem_C_sub_Assoc}
	$\C{Z} \subset \C{C} \subset {\C A} \subset \C{I}_{3,1}$.
\end{theorem}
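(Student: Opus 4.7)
The plan is to establish the chain $\C{Z}\subseteq\C{C}\subseteq\C{A}\subseteq\C{I}_{3,1}$ and then separate each pair of varieties by a small counterexample algebra.

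The outer inclusions are short calculations. For $\C{Z}\subseteq\C{C}$, if every $x\to y$ is $0$ then so is $y\to x$, giving (C). For $\C{A}\subseteq\C{I}_{3,1}$, two applications of (A) give
\[
x''' = \bigl((x\to 0)\to 0\bigr)\to 0 \overset{(\mathrm{A})}{\approx} (x\to 0)\to(0\to 0) = x'\to 0' \overset{(\mathrm{A})}{\approx} x\to (0\to 0'),
\]
and then $0\to 0' = 0\to(0\to 0) \overset{(\mathrm{A})}{\approx} (0\to 0)\to 0 = 0''\overset{(\mathrm{I}_0)}{\approx} 0$, so $x'''\approx x\to 0 = x'$.

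The main inclusion is $\C{C}\subseteq\C{A}$. The plan is to expand both sides of associativity via axiom (I):
\[
(x\to y)\to z \overset{(\mathrm{I})}{\approx} \bigl[(z'\to x)\to(y\to z)'\bigr]',
\]
and, after flipping via (C),
\[
x\to(y\to z) \overset{(\mathrm{C})}{\approx} (y\to z)\to x \overset{(\mathrm{I})}{\approx} \bigl[(x'\to y)\to(z\to x)'\bigr]'.
\]
I would then equate these bracketed expressions by combining (C) on every inner implication with Lemma~\ref{properties_commut_zrupoids}(c), which says $(p\to q)'$ is invariant under priming $p$ or $q$, making every ``primed implication'' effectively symmetric in its arguments. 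Lemma~\ref{properties_commut_zrupoids}(d), which collapses iterated self-applications, should help simplify intermediate expressions and push both sides towards a common normal form.

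For strictness, the two-element join-semilattice ($x\to y = x\vee y$, $x'=x$) separates $\C{Z}\subsetneq\C{C}$; the two-element Boolean algebra satisfies $x'''\approx x'$ by direct check but violates (A), e.g.\ $(0\to 1)\to 0 = 0 \neq 1 = 0\to(1\to 0)$, separating $\C{A}\subsetneq\C{I}_{3,1}$. For $\C{C}\subsetneq\C{A}$, an algebra whose implication is associative but not commutative must be exhibited; a small multiplication table (three or four elements) is the natural candidate, with the verification of (I), (I$_0$) and (A) being routine once constructed. The principal obstacle is the $\C{C}\subseteq\C{A}$ step: the two expansions of associativity have the same shape up to a cyclic permutation of $(x,y,z)$, and finding the exact sequence of (C)- and Lemma~\ref{properties_commut_zrupoids}(c)-rewrites that identifies them requires careful book-keeping of where primes live.
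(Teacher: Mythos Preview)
Your overall strategy matches the paper's: the outer inclusions $\C{Z}\subseteq\C{C}$ and $\C{A}\subseteq\C{I}_{3,1}$ are handled exactly as the paper does, and your separating examples for $\C{Z}\subsetneq\C{C}$ and $\C{A}\subsetneq\C{I}_{3,1}$ (the algebra $\mathbf{2}_s$ and the Boolean $\mathbf{2}_b$) are correct.

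The genuine gap is the inclusion $\C{C}\subseteq\C{A}$, which you correctly flag as the principal obstacle but do not actually carry out. Saying the two (I)-expansions ``have the same shape up to a cyclic permutation'' and that careful book-keeping will reconcile them is not yet a proof: the expressions $[(z'\to x)\to(y\to z)']'$ and $[(x'\to y)\to(z\to x)']'$ are not related by any direct composition of (C) and Lemma~\ref{properties_commut_zrupoids}(\ref{060415_05}), and chasing primes alone leaves you one cyclic shift apart indefinitely. The paper's route is to drive \emph{both} sides to a common third expression, namely $[(z\to x)\to y]\to(x\to z)$. For each side this requires, after stripping primes via Lemma~\ref{properties_commut_zrupoids}(\ref{060415_05}) and swapping once by (C), a \emph{second} application of (I); that second (I) produces a subterm of the form $z\to(z\to x)$ on one side and $x\to(x\to z)$ on the other, and it is precisely Lemma~\ref{properties_commut_zrupoids}(\ref{060415_04}) that collapses these to $x'\to z$ and $z'\to x$, after which the remaining primes and the outer double-primes are removed with Lemma~\ref{properties_commut_zrupoids}(\ref{060415_05}) and Lemma~\ref{Lemma_300315_01}(\ref{300315_02}),(\ref{060415_01}). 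The missing idea in your plan is this common target together with the second invocation of (I); without them the rewriting does not close.

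You also leave the separating example for $\C{C}\subsetneq\C{A}$ unspecified. One that works is the $3$-element algebra of Figure~\ref{fig3}: there $a\to b=b$ for $b\neq 0$ and $a\to 0=a$, which is readily checked to be associative, while $1\to 2=2\neq 1=2\to 1$ shows it is not commutative.
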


\begin{proof}
It is clear that $\C{Z} \subset \C{C} $.   So, let $\mathbf{A} \in \C{C}$. 
	Notice that 
      
	$(x \to y) \to z	 \approx  [(z' \to x) \to (y \to z)']'  
\overset{\ref{Lemma_300315_01} (\ref{300315_02})}{\approx}  [(z' \to x)'' \to (y \to z)']' $ 
\begin{align*}
&\overset{\ref{properties_commut_zrupoids} (\ref{060415_05})}{\approx}  
[(z \to x)'' \to (y \to z)']' 
\overset{\ref{Lemma_300315_01} (\ref{300315_02})}{\approx}  [(z \to x) \to (y \to z)']' \\ 
&\overset{\ref{properties_commut_zrupoids} (\ref{060415_05})}{\approx}  [(z \to x) \to (y \to z)]' 
	 \approx  [(y \to z) \to (z \to x)]' \\  
&\overset{(I)}{\approx}  [\{(z \to x)' \to y\} \to \{z \to (z \to x)\}']'' \\ 
&\overset{\ref{Lemma_300315_01} (\ref{300315_02})}{\approx}  
[\{(z \to x)' \to y\}'' \to \{z \to (z \to x)\}']'' \\  
&\overset{\ref{properties_commut_zrupoids} (\ref{060415_05})}{\approx}  
  [\{(z \to x) \to y\}'' \to \{z \to (z \to x)\}']'' \\
&\overset{\ref {Lemma_300315_01} (\ref{300315_02})}{\approx}  
[\{(z \to x) \to y\} \to \{z \to (z \to x)\}']'' \\ 
&\overset{\ref{Lemma_300315_01} (\ref{300315_02})}{\approx}  [\{(z \to x) \to y\} \to \{z \to (z \to x)\}]'' 
\overset{\ref{properties_commut_zrupoids}(\ref{060415_04})}{\approx}  [\{(z \to x) \to y\} \to (x' \to z)]'' \\
&\overset{\ref{SankaLemma7.5}}{\approx}  [\{(z \to x) \to y\} \to (x' \to z)'']'' 
\overset{\ref{properties_commut_zrupoids} (\ref{060415_05})}{\approx}  [\{(z \to x) \to y\} \to (x \to z)'']'' \\
&\overset{\ref{SankaLemma7.5}}{\approx}  [\{(z \to x) \to y\} \to (x \to z)]''  
\overset{\ref{Lemma_300315_01} (\ref{060415_01})}{\approx}  [(z \to x) \to y] \to (x \to z). 
\end{align*}     
	
	Also,
	
	$x \to (y \to z)	 \approx (y \to z) \to x  
	 \approx  [(x' \to y) \to (z \to x)']' $ 
\begin{align*}
&\overset{\ref{Lemma_300315_01} (\ref{300315_02})}{\approx}  [(x' \to y)'' \to (z \to x)']'  
\overset{\ref{properties_commut_zrupoids} (\ref{060415_05})}{\approx}  
  [(x \to y)'' \to (z \to x)']' \\ 
&\overset{\ref{Lemma_300315_01} (\ref{300315_02})}{\approx}  [(x \to y) \to (z \to x)']'  
\overset{\ref{properties_commut_zrupoids} (\ref{060415_05})}{\approx}  [(x \to y) \to (z \to x)]' \\ 
&\overset{(I)}{\approx}  [\{(z \to x)' \to x\} \to \{y \to (z \to x)\}']'' \\
&\overset{\ref{Lemma_300315_01} (\ref{300315_02})}{\approx}  [\{(z \to x)' \to x\}'' \to \{y \to (z \to x)\}']'' \\ 
&\overset{\ref{properties_commut_zrupoids} (\ref{060415_05})}{\approx} [\{(z \to x) \to x\}'' \to \{y \to (z \to x)\}']'' \\ 
&\overset{\ref{Lemma_300315_01} (\ref{300315_02})}{\approx}  [\{(z \to x) \to x\} \to \{y \to (z \to x)\}']'' \\ 
&\overset{\ref{properties_commut_zrupoids} (\ref{060415_05})}{\approx}  [\{(z \to x) \to x\} \to \{y \to (z \to x)\}]'' \\
&\approx  [\{x \to (x \to z)\} \to \{y \to (z \to x)\}]''  
\overset{\ref{properties_commut_zrupoids} (\ref{060415_04})}{\approx}  [(z' \to x) \to [y \to (z \to x)]]'' \\ 
&\overset{\ref{Lemma_300315_01}(\ref{300315_02})}{\approx}  [(z' \to x)'' \to [y \to (z \to x)]]''  
\overset{\ref{properties_commut_zrupoids} (\ref{060415_05})}{\approx}  [(z \to x)'' \to [y \to (z \to x)]]'' \\ 
&\overset{\ref{Lemma_300315_01} (\ref{300315_02})}{\approx}  [(z \to x) \to [y \to (z \to x)]]''  
	 \approx  [\{(z \to x) \to y\} \to (x \to z)]'' \\ 
& \approx  [(z \to x) \to y] \to (x \to z).  
\end{align*}	
	
	Hence,  $(x \to y) \to z \approx x \to (y \to z)$.
	For the second half, let $\mathbf{A} \in {\C A}$. 
	$x'''	 \approx  [(x \to 0) \to 0]'  
\overset{(A)}{\approx}  [x \to(0 \to 0]'  
\approx (x \to 0') \to 0 
\overset{(A)}{\approx}  x \to (0' \to 0) 
	 \approx x \to 0'' 
	 \approx  x \to 0 
	 \approx  x'. $  
	
	Finally, Example 2 of Section \ref{S:9} shows that the first two inclusions are strict, while the 2-element Boolean algebra illustrates the strictness of the third inclusion.	
\end{proof}

\section{The varieties $\C{I}_{1,0} $,   $\C{ID}$ and ${\C A}$}\label{S:10}

We recall from \cite{sankappanavarMorgan2012} that $\C{I}_{1,0}$ and $\C{ID}$
denote the subvarieties of $\C{I}$ defined, respectively, by:
\begin{gather*}
x' \approx x, \\
x \to x \approx x.
\end{gather*}
(The variety $\C{ID}$ was actually called $\mathbf{IDMP}$ in \cite{sankappanavarMorgan2012}.)

In this section we wish to show that $\C{I}_{1,0} =  \C{ID} \cap  {\C A}$.  For this we need the following two lemmas.

\begin{lemma} \label{Propierties_I10}
	Let $\mathbf{A} \in \C{I}_{1,0}$.   
	Then the following are true in $\mathbf{A}$:
	\begin{enumerate}[{\rm(a)}]  
		\item   $(y \to z) \to x \approx  (x \to y) \to (z \to x)$, \label{010715_15}
		\item   $(0 \to x) \to y \approx x \to y$, \label{010715_16} 
		\item   $x \to (y \to x) \approx (0 \to y) \to x $,  \label{010715_17} 
		\item   $x \to (y \to x) \approx y \to x$. \label{010715_18}  
          	\end{enumerate}
\end{lemma}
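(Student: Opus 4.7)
The plan is to exploit two immediate consequences of the defining identity $x' \approx x$ of $\C{I}_{1,0}$: first, every prime disappears, so in particular $0' \approx 0$ and $x'' \approx x$; second, $\C{I}_{1,0}$ is therefore a subvariety of $\C{I}_{2,0}$, so Lemma~\ref{general_properties_equiv} yields $0' \to x \approx x$, which combined with $0' \approx 0$ gives the crucial identity $0 \to x \approx x$. With this in hand, everything else collapses to one application of axiom (I).

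For (a), I would just substitute into (I) directly: replacing $(x,y,z)$ in $(x \to y) \to z \approx [(z' \to x) \to (y \to z)']'$ by $(y,z,x)$ gives
\[
(y \to z) \to x \approx [(x' \to y) \to (z \to x)']',
\]
and since $x' \approx x$ and $(z \to x)' \approx (z \to x)$ and the outer prime is also the identity, the right-hand side simplifies to $(x \to y) \to (z \to x)$.

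For (b), I apply the preliminary observation directly: $(0 \to x) \to y \approx x \to y$. For (c), I specialize (a) by substituting $(y,z) \mapsto (0,y)$, obtaining $(0 \to y) \to x \approx (x \to 0) \to (y \to x)$, and then $(x \to 0) = x' = x$ gives $(0 \to y) \to x \approx x \to (y \to x)$. Finally, (d) is immediate by chaining (c) and (b):
\[
x \to (y \to x) \overset{(c)}{\approx} (0 \to y) \to x \overset{(b)}{\approx} y \to x.
\]

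There is no genuine obstacle here; the only conceptual step is recognizing that $\C{I}_{1,0} \subseteq \C{I}_{2,0}$ so that Lemma~\ref{general_properties_equiv} applies, after which all four identities reduce to trivial rewrites of axiom (I).
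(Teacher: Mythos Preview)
Your proposal is correct and matches the paper's argument in structure: (a) is a direct rewrite of (I) under $x'\approx x$, (c) is obtained from (I) (equivalently from (a)) with the substitution you use, and (d) is the composition of (b) and (c). The only minor deviation is in (b): the paper derives $(0\to x)\to y \approx x\to y$ by applying (I) to $(a\to b)'$ and collapsing primes, whereas you first obtain the stronger $0\to x \approx x$ via Lemma~\ref{general_properties_equiv}; both routes are immediate and equivalent here.
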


\begin{proof} Let $a,b \in A$.
	(\ref{010715_15}) is immediate from (I) and $a' \approx a$.   Since $\mathbf A \in \C{I}_{1,0}$, we get $a \to b = (a \to b)' = [(0' \to a) \to b'']' =(0 \to a) \to b$, proving (\ref{010715_16}), and the proof of (\ref{010715_17}) is similar, while (\ref{010715_18}) follows from (\ref{010715_16}) and (\ref{010715_17}).
	Since $\C{I}_{1,0} \subseteq \C{I}_{2,0}$, using Lemma \ref{general_properties2} (\ref{291014_10}) and Lemma \ref{general_properties} (\ref{cuasiConmutativeOfImplic}), we have that $(a \to b) \to a = (0 \to b) \to a = (b' \to 0') \to a = (b \to 0) \to a = b \to a$. Hence (\ref{010715_18}) holds in $\mathbf{A}$.
\end{proof}

\begin{lemma}\label{IDAssoc}	
	
	Let $\mathbf{A} \in \C{ID}\ \cap \ {\C A}$. 
	Then the following are true in $\mathbf{A}$:
	\begin{enumerate}[{\rm(1)}]  
		\item  $y \to (z \to x) \approx x \to [0 \to \{y \to (z \to x')\}]$, \label{010715_09} 
		\item  $x \to (y \to x') \approx y \to x$,  \label{010715_10} 
		\item  $x \to (0 \to x) \approx x'$,  \label{010715_11} 
		\item  $(x \to y) \to [0 \to \{x \to (y \to (x \to y'))\}] \approx x \to y $,  \label{010715_12} 
		\item  $(x \to y) \to [0 \to  (x \to y)] \approx x \to y$,  \label{010715_13} 
		\item  $x \to y' \approx x \to y$. \label{IDAssoc_39}  \label{010715_14} 
	\end{enumerate}
\end{lemma}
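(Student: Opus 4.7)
The approach is to collapse axiom (I) into a much simpler core identity by exploiting (A) and (ID), and then to read off (1)--(6) as specializations of this core identity.

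I would first extract four elementary consequences of (A) and (ID): $0' \approx 0$ (from (ID) at $x = 0$); $x'' \approx x'$ (reassociate $(x \to 0) \to 0$ by (A) and use $0' \approx 0$); $(u \to v)' \approx u \to v'$ (reassociate $(u \to v) \to 0$ by (A)); and $x \to x' \approx x'$ (reassociate $x \to (x \to 0)$ by (A) and use (ID)). Substituting these into (I), the inner $(y \to z)'$ becomes $y \to z'$, and then the outer prime is absorbed because, applying $(u \to v)' \approx u \to v'$ twice more together with $z'' \approx z'$, it leaves $(z' \to x) \to (y \to z')$ unchanged. This collapses (I) to the much cleaner core identity
\[
(x \to y) \to z \;\approx\; (z' \to x) \to (y \to z').
\]

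Item (1) is now almost a restatement: unfolding $x' = x \to 0$ and repeatedly applying (A) reduces the LHS and RHS of (1) to $(y \to z) \to x$ and $x' \to ((y \to z) \to x')$ respectively, and these are equal by the core identity with $w = y \to z$. For the remaining items, I would use the equivalent form $a \to (b \to c) \approx (c' \to a) \to (b \to c')$ of the core identity and specialize. Taking $(a,b,c) = (0,c,c)$ and simplifying by (ID) and the auxiliaries yields $0 \to c \approx c'$; combined with $x \to x' \approx x'$ this gives (3). Specializing at $(x,y,y)$ and at $(x,y,y')$ produces two identities whose right-hand sides both collapse to $(y' \to x) \to y'$ (using $y'' \approx y'$ and $y \to y' \approx y'$), so the left-hand sides $x \to y$ and $x \to y'$ must agree, proving (6). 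Then (5) is (3) applied with $x \to y$ in place of $x$ together with (6) and $(u \to v)' \approx u \to v'$; (4) reduces to (5) once the inner block $x \to (y \to (x \to y'))$ is collapsed to $x \to y$ via (6), (A), and (ID); and (2) follows by specializing the core identity at $(x,y,x')$ and feeding in (3) and (6).

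The main obstacle is spotting the collapse of (I). Once the primes have been pushed inside $\to$ via $(u \to v)' \approx u \to v'$ and $x'' \approx x'$ is in hand, the outer negation in (I) simply disappears, and the core identity becomes visible. After this point the proofs are short specializations; before it, even (3) is awkward to prove directly, and the six identities look unrelated.
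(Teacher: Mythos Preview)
Your proof is correct, and in fact cleaner than the paper's. The key difference is organizational: you first isolate the trivial consequences $0'\approx 0$, $x''\approx x'$, $(u\to v)'\approx u\to v'$, $x\to x'\approx x'$, and $0\to x\approx x'$, and then use them to collapse (I) into the prime-free core identity $(x\to y)\to z\approx(z'\to x)\to(y\to z')$. With this in hand, items (1)--(6) become independent specializations (your derivations of (3), (6), (5), (4), (2) all check), and in particular (6) --- the identity actually needed downstream for Theorem~\ref{Theo_I10_IDcapAssoc} --- drops out almost immediately from two instances of the core identity. The paper instead proves (1) directly from (I) by repeated reassociation (implicitly using your auxiliaries but never naming them), and then establishes (2) through (6) \emph{in order}, each relying on the previous ones; (6) is reached only after the full chain (1)$\Rightarrow$(2)$\Rightarrow$(3),(4)$\Rightarrow$(5)$\Rightarrow$(6). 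Your route buys transparency and a shorter path to the main identity; the paper's sequential build-up yields the items in the order stated but obscures the fact that they all flow from a single simplified form of (I).
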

\begin{proof}

		(\ref{010715_09}): 
	$	y \to (z \to x)  \overset{(A)}{\approx}   (y \to z) \to x  
\overset{(I)}{\approx}  [(x'  \to y)  \to (z \to x)']' $  
\begin{align*}
&\overset{(A)}{\approx}   [\{x \to (0 \to y) \} \to (z \to x)']'  
	  \overset{(A)}{\approx}   [\{x \to (0 \to y)\} \to (z \to x')]' \\ 
&\overset{(A)}{\approx}   [x \to \{(0 \to y)  \to (z \to x')\}]'  
	 \overset{(A)}{\approx}   [x \to \{0 \to (y  \to (z \to x'))\}]' \\ 
&\overset{(A)}{\approx}   x \to [\{0 \to (y  \to (z \to x'))\} \to 0] \\ 
&\overset{(A)}{\approx}   x \to [0 \to \{(y  \to (z \to x')) \to 0\}] \\ 		
&\overset{(A)}{\approx}   x \to [0 \to \{y  \to ((z \to x') \to 0)\}] \\ 
&\overset{(A)}{\approx}   x \to [0 \to \{y  \to (z \to (x' \to 0))\}]\\   
&\overset{(A)}{\approx}   x \to [0 \to \{y  \to (x \to (x \to (0 \to 0)))\}] \\ 
&\overset{(ID)}{\approx}   x \to [0 \to \{y  \to (z \to x' )\}].\\    
\end{align*}

\ \\ \ \\  
			
			(\ref{010715_10}):

\quad $x \to (y \to x')  \overset{(\ref{010715_09})}{ \approx}  x \to [0 \to \{0 \to (y \to (x \to 0'))\}] $ 
\begin{align*}
&\overset{(ID)}{\approx}  x \to [0 \to \{0 \to (y \to x')\}]  
\overset{(A)}{\approx}   x \to [(0 \to 0) \to (y \to x')] \\ 
& \overset{(ID)}{\approx}   x \to [0 \to (y  \to x')]  
\overset{(ID)}{\approx}   x \to [0 \to \{y  \to (x \to x)'\}] \\ 
&\overset{(A)}{\approx}  x \to [0 \to \{y  \to (x \to x')\}] 
\overset{(\ref{010715_09})}{ \approx}  y \to (x \to x)  
	\overset{(ID)}{\approx}   y \to x.    
\end{align*}

		(\ref{010715_11}): 

	$ x \to (0 \to x)   
		 \overset{(A)}{\approx}    (x \to 0) \to x   
		 \overset{(I)}{\approx}   [(x' \to x) \to (0 \to x)']' $ 
\begin{align*}		 
&\overset{(A)}{\approx}   [\{x \to (0 \to x)\} \to (0 \to x)']'  
\overset{(A)}{\approx}   [x \to \{(0 \to x)  \to (0 \to x')\}]' \\ 
&\overset{(A)}{\approx}   [x \to (0 \to \{x \to (0 \to x')\}]'  
		 \overset{(\ref{010715_09}))}{\approx}   [x \to (0 \to x)]' 
		 \overset{(A)}{\approx}  x \to (0 \to x') \\ 
&\overset{(A)}{\approx}   x'  \to x' 
		 \overset{(ID)}{\approx}   x' .  
\end{align*}

		(\ref{010715_12}):  
		$(x \to y) \to [0 \to \{x \to (y \to (x \to y'))\}] $ 
\begin{align*}
&\overset{(A)}{\approx}     (x \to y) \to [0 \to \{x \to (y \to (x \to y)')\}]          
		  \overset{(\ref{010715_09})}{\approx}   x \to [y  \to (x \to y)] \\  
&\overset{(A)}{\approx}    (x \to y)  \to (x \to y)   
		  \overset{(ID)}{\approx}   x \to y.   	
\end{align*}

		(\ref{010715_13}): 
	$	(x \to y) \to [0 \to  (x \to y)] 
		 \overset{(ID)}{\approx}   (x \to y) \to [0 \to \{(x \to  x) \to y\}] $   
\begin{align*}
&\overset{(A)}{\approx}   (x \to y) \to [0 \to \{x \to  (x \to y)\}] \\ 
&\overset{(\ref{010715_10})}{\approx}  (x \to y) \to [0 \to \{x \to (y \to  (x \to y'))\}]  
\overset{(\ref{010715_12})}{\approx}    x \to y.   
\end{align*}

		(\ref{010715_14}):  

	$	x \to y  \overset{(\ref{010715_13})}{\approx}  (x \to y) \to [0 \to  (x \to y)]  
		  \overset{(\ref{010715_11})}{\approx}  (x \to y)'   
		 \overset{(A)}{\approx}   x \to y'.  $ 
\end{proof}

We are now ready to prove the theorem.
\begin{theorem} \label{Theo_I10_IDcapAssoc}
	$\C{I}_{1,0} =  \C{ID} \cap  {\C A}$.  
\end{theorem}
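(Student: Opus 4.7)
The plan is to prove both inclusions.

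For $\C{I}_{1,0}\subseteq \C{ID}\cap\C{A}$, I would first observe that $x''\approx (x')'\approx x$ in $\C{I}_{1,0}$, so $\C{I}_{1,0}\subseteq \C{I}_{2,0}$ and Lemma \ref{general_properties_equiv} is available. Part (d) of that lemma gives $x'\to x\approx x$, which in $\C{I}_{1,0}$ reads $x\to x\approx x$; hence (ID) is immediate. For (A), I would exploit the fact that, under $x'\approx x$, axiom (I) collapses to the form
\[
	(x\to y)\to z\approx (z\to x)\to(y\to z),
\]
call this (I$'$). Applying (I$'$) a second time to the right-hand side (with the roles $a:=z$, $b:=x$, $c:=y\to z$) yields
\[
	(x\to y)\to z \approx \bigl((y\to z)\to z\bigr)\to\bigl(x\to(y\to z)\bigr).
\]
The next step is the auxiliary identity $(y\to z)\to z\approx y\to z$: two further applications of (I$'$) combined with (ID) rewrite $(y\to z)\to z$ first as $(z\to y)\to z$ and then as $z\to(y\to z)$, and Lemma \ref{Propierties_I10}(\ref{010715_18}) collapses this to $y\to z$. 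Substituting back and applying Lemma \ref{Propierties_I10}(\ref{010715_18}) once more (now in the pattern $A\to(B\to A)\approx B\to A$ with $A:=y\to z$ and $B:=x$) converts the right-hand side to $x\to(y\to z)$, proving (A).

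For $\C{ID}\cap\C{A}\subseteq \C{I}_{1,0}$, the computation is short: given (ID) and (A), Lemma \ref{IDAssoc} applies, and
\[
	x' \approx (x\to x)\to 0 \approx x\to(x\to 0) \approx x\to x' \approx x\to x \approx x,
\]
where the first step uses (ID), the second uses (A), the fourth invokes Lemma \ref{IDAssoc}(\ref{010715_14}), and the last uses (ID) again. Hence $x'\approx x$, so $\mathbf{A}\in\C{I}_{1,0}$.

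The main obstacle is the derivation of associativity in the forward direction: one must find the right sequence of rewrites so that the collapsed form (I$'$) and Lemma \ref{Propierties_I10}(\ref{010715_18}) fit together. The key insight is isolating the auxiliary identity $(y\to z)\to z\approx y\to z$ in $\C{I}_{1,0}$; once that is in hand, the outer identity reduces to a single further application of Lemma \ref{Propierties_I10}(\ref{010715_18}). The reverse inclusion, by contrast, is a four-step string of substitutions and causes no trouble.
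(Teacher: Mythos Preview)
Your argument is correct in both directions, and the overall skeleton matches the paper's: for the forward inclusion you collapse (I) to $(x\to y)\to z\approx(z\to x)\to(y\to z)$, apply it twice to reach $((y\to z)\to z)\to(x\to(y\to z))$, then reduce; the reverse inclusion is the same four-line chain the paper gives.

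The one genuine difference is in how the inner term $(y\to z)\to z$ is collapsed. The paper, working inside $\C{I}_{2,0}$, rewrites it via Lemma~\ref{general_properties2}(\ref{281014_05}) as $(y\to 0')\to z$, strips the prime, and gets $y\to z$; it then finishes the outer expression with Lemma~\ref{general_properties2}(\ref{281114_01}). Your route stays entirely inside $\C{I}_{1,0}$: two more uses of (I$'$) plus (ID) give $(y\to z)\to z\approx z\to(y\to z)$, and then Lemma~\ref{Propierties_I10}(\ref{010715_18}) handles both this and the final step. This is more self-contained---you never invoke the longer identity list of Lemma~\ref{general_properties2}---and arguably cleaner, at the cost of one extra rewriting step. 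For (ID) itself, your appeal to Lemma~\ref{general_properties_equiv}(d) is also slightly more direct than the paper's route through Lemma~\ref{Propierties_I10}(\ref{010715_16}),(\ref{010715_18}).
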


\begin{proof}
	Let $\mathbf{A} \in \C{I}_{1,0}$. 
Then, using (\ref{010715_16}) and (\ref{010715_18}) of Lemma \ref{Propierties_I10}, we have $x  \approx x' \approx 0 \to x \approx x \to (0 \to x) \approx x \to x$.  Thus, $\C{I}_{1,0} \subseteq \ \C{ID}$.  Observe that, since $\C{I}_{1,0} \subseteq \C{I}_{2,0}$, we get
\quad $(x \to y) \to z	\overset{\ref{Propierties_I10} (\ref{010715_15})}{\approx}  (z \to x) \to (y \to z)$ 
\begin{align*}
&\overset{\ref{Propierties_I10} (\ref{010715_15})}{\approx}  [(y \to z) \to z] \to [x \to (y \to z)] \\ 
&\overset{hyp}{\approx}  [(y \to z') \to z] \to [x \to (y \to z)]  
\overset{\ref{general_properties2} (\ref{281014_05})}{\approx} 
 [(y \to 0') \to z] \to [a \to (y \to z)] \\ 
&\overset{hyp}{\approx}  [(y \to 0) \to z] \to [x \to (y \to z)]   
\overset{hyp}{\approx}  (y \to z) \to [x \to (y \to z)] \\ 
&\overset{hyp}{\approx}  (y \to z) \to [x \to (y \to z)']  
\overset{\ref{general_properties2} (\ref{281114_01})}{\approx}  x \to (y \to z)'  
\overset{hyp}{\approx}  x \to (y \to z). 
\end{align*}
Hence $\C{I}_{1,0} \subseteq  \C{ID} \cap {\C A}$. 		
From	$x'   \overset{(ID)}{\approx}  (x \to x)'  
	 \overset{(A)}{\approx}  x \to x'  
	\overset{\ref{IDAssoc} (\ref{IDAssoc_39})}{\approx}  x  \to x	 
	 \approx  x,$  the other inclusion follows. 	 
\end{proof}

\section{Axiomatization of the varieties generated by the 2-element algebras in $\C{I}$}\label{S:11}

In this section we axiomatize the varieties generated by 
$2$-element algebras in $\C{I}$.
Recall from \cite{sankappanavarMorgan2012} that there are three $2$-element algebras in $\C{I}$, namely $\mathbf{2_z}$, $\mathbf{2_s}$, $\mathbf{2_b}$, whose $\to$
 operations are respectively as follows:   

\begin{figure}[ht] 
\begin{center} 
\begin{tabular}{r|rr}
	$\to$: & $0$ & $1$\\
	\hline
	$0$ & $0$ & $0$ \\
	$1$ & $0$ & $0$
\end{tabular} \hspace{.5cm} 
\begin{tabular}{r|rr}
	$\to$: & $0$ & $1$\\
	\hline
	$0$ & $0$ & $1$ \\
	$1$ & $1$ & $1$
\end{tabular} \hspace{.5cm}	
\begin{tabular}{r|rr}
		$\to$: & $0$ & $1$\\
		\hline
		$0$ & $1$ & $1$ \\
		$1$ & $0$ & $1$
	\end{tabular}    
\end{center}
\caption{$\to$ operations in the $2$-element algebras of $\C{I}$} \label{fig 4}
\end{figure}

\subsection{$\lor$-semilattices with the least element $0$ as a subvariety of $\C{I}$}

We first determine those algebras of $\C{I}$ which are $\lor$-semilattices with the least element $0$.

Let ${\C{ SL}}$ denote the subvariety of $\C{I}$ defined by
\begin{enumerate}[{\rm(1)}]  
\item   $ x' \approx x,$
\item   $ x \to  y \approx y \to x $ \quad (C).
\end{enumerate}
That is, $\C{SL} :=  \C{I}_{1,0} \cap  \C{C}$.

\begin{lemma} \label{B}
$\C{SL} \models x \to x \approx x$. 
\end{lemma}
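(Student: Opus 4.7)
The plan is to combine the two defining identities of $\C{SL}$ with Lemma \ref{properties_commut_zrupoids}(b), which asserts that $x' \approx (x \to x)'$ holds in any algebra of $\C{C}$. Since $\C{SL} = \C{I}_{1,0} \cap \C{C}$, this identity is available.

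The chain of reasoning is very short: applying $x' \approx x$ (the defining identity of $\C{I}_{1,0}$) on the left and $x' \approx x$ again to $x \to x$ in place of $x$ on the right, we get
\[
  x \;\approx\; x' \;\overset{\ref{properties_commut_zrupoids}(b)}{\approx}\; (x \to x)' \;\approx\; x \to x,
\]
where the last step uses $y' \approx y$ with $y := x \to x$. Since everything needed is already stated in Lemma \ref{properties_commut_zrupoids}, no further calculation inside (I) is required.

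I expect no real obstacle here; the only thing to be careful about is invoking Lemma \ref{properties_commut_zrupoids} cleanly (it is stated for $\mathbf{A} \in \C{C}$, and $\C{SL} \subseteq \C{C}$ by definition), and remembering that the hypothesis $x' \approx x$ from $\C{I}_{1,0}$ applies uniformly to every element, in particular to $x \to x$ itself.
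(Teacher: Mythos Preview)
Your proof is correct. You invoke Lemma~\ref{properties_commut_zrupoids}(b), which is legitimately available since it is proved in Section~\ref{S:9} for all of $\C{C}$, and $\C{SL}\subseteq\C{C}$ by definition; then two applications of $x'\approx x$ finish the argument.

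The paper takes a slightly different route: it does not cite Lemma~\ref{properties_commut_zrupoids} but instead performs a direct computation from axiom (I), rewriting $x\to x$ as $(x'\to x')'$, recognizing this as the right-hand side of an instance of (I), and collapsing it to $(0\to 0)\to x \approx 0\to x \approx x' \approx x$. Your approach is more modular and shorter on the page, since the work hidden in Lemma~\ref{properties_commut_zrupoids}(b) has already been done; the paper's approach is self-contained within the lemma itself. Both are equally valid, and neither has a real advantage over the other beyond taste.
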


\begin{proof}
Let $\mathbf{A}  \in \C{SL}$.   
Then           
\quad $ x \to x   \approx  (x' \to x')'   
\overset{(C)}{\approx}  [(x \to 0) \to (0 \to x)']'  $ 
\begin{align*}
\noindent &\overset{(I)}{\approx} (0 \to 0) \to x  
	\approx  0 \to x  
	\approx  x'  
	\approx  x,
\end{align*} 
proving the lemma.
\end{proof}

Let $\C{S}^0$ denote the variety of $\lor$-semilattices with the least element $0$; that is,  $\mathbf{A} = \langle A, \lor, 0 \rangle \in \C{S}^0$ satisfies:
\begin{enumerate}[{\rm(1)}]   
	\item   $ (x \lor y) \lor z \approx  x \lor (y \lor z) $,
	\item   $ x \lor x \approx  x $,
	\item   $  x \lor y \approx  y \lor x$,
	\item   $  x \lor 0 \approx  x$.
\end{enumerate}
The following lemma is easy.

\begin{lemma}\label{C}
	Let $\mathbf{A} \in \C{S}^0$.  Then $\mathbf{A}$ satisfies:\\
	$[(z' \lor x) \lor (y \lor z)']' \approx  (x \lor y) \lor z $, where $x' := x \lor 0$.
\end{lemma}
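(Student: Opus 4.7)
The proof will be essentially immediate once one unpacks the definition $x' := x \lor 0$ in the semilattice setting. My plan is as follows.

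First, I would observe that by axiom (4), $x \lor 0 \approx x$, so the unary operation $x' := x \lor 0$ is just the identity: $x' \approx x$ for every $x$. Substituting this into the left-hand side of the target identity collapses both primed occurrences, giving
\[
[(z' \lor x) \lor (y \lor z)']' \approx [(z \lor x) \lor (y \lor z)] \lor 0 \approx (z \lor x) \lor (y \lor z).
\]

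Second, I would reduce $(z \lor x) \lor (y \lor z)$ to $(x \lor y) \lor z$ using only the semilattice axioms (1)--(3). Specifically, applying associativity (1) and commutativity (3) to rearrange the factors, then idempotence (2) to merge the two copies of $z$, yields $x \lor y \lor z$, which by associativity again equals $(x \lor y) \lor z$. These are routine semilattice manipulations, so I would not write them out in detail.

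There is no real obstacle here; the statement is essentially a bookkeeping lemma, introduced presumably to match the shape of axiom (I) from Definition~\ref{D:IG} so that a later theorem can identify $\C{SL}$-algebras with elements of $\C{I}$ (by reading $\to$ as $\lor$). The only thing to be careful about is applying $x \lor 0 \approx x$ at the outermost step as well, since the outer prime in $[\,\cdots]'$ also needs to be dissolved; once that is noted, the computation reduces to a one-line semilattice calculation.
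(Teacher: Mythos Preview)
Your proposal is correct and is exactly what the paper has in mind: the paper simply declares the lemma ``easy'' without giving any argument, and your computation---dissolving the primes via $x\lor 0\approx x$ and then using associativity, commutativity, and idempotence to collapse $(z\lor x)\lor(y\lor z)$ to $(x\lor y)\lor z$---is the obvious intended one.
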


\begin{theorem}
	${\C{SL}}$ is term-equivalent to (in fact, is) $\C{S}^0$.  More precisely,
	\begin{enumerate}[{\rm(a)}]   
		\item
		Let $\mathbf{A} \in {\C{SL}}$.  Define $\mathbf{A^{\lor}} := \langle A, \lor, 0 \rangle$ be the algebra where $\lor := \to$.
		Then $\mathbf{A^{\lor}} \in \C{S}^0$. 
		\item
		Let $\mathbf{A} \in \C{S}^0$.  Define $\mathbf{A^{\to}} := \langle A, \to, 0 \rangle$ be the algebra where $\to := \lor$.
		Then $\mathbf{A^{\to}} \in {\C{SL}}$.
		\item
		Let $\mathbf{A} \in {\C{SL}}$.  Then $\mathbf{A}{^{\lor}}^{\to} = \mathbf{A}$.
		\item
		Let $\mathbf{A} \in \C{S}^0$.  Then $\mathbf{A}{^{\to}}^{\lor} = \mathbf{A}$.
	\end{enumerate}
\end{theorem}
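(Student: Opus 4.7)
The plan is to verify the four parts in order, with parts (c) and (d) being essentially immediate and parts (a) and (b) doing the real work.

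For part (a), given $\mathbf{A} \in \C{SL}$ I would verify the four axioms of $\C{S}^0$ one by one. Commutativity of $\lor$ is literally (C). Idempotence $x \lor x \approx x$ is Lemma \ref{B}. The identity law $x \lor 0 \approx x$ reduces to $x \to 0 \approx x$, which follows from the definition $x' := x \to 0$ together with the defining identity $x' \approx x$ of $\C{I}_{1,0}$. The one step that needs an extra ingredient is associativity $(x \lor y) \lor z \approx x \lor (y \lor z)$, which is just $(x \to y) \to z \approx x \to (y \to z)$; this is axiom (A), and it holds because $\C{SL} = \C{I}_{1,0} \cap \C{C} \subseteq \C{I}_{1,0} = \C{ID} \cap \C{A}$ by Theorem \ref{Theo_I10_IDcapAssoc}.

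For part (b), given $\mathbf{A} \in \C{S}^0$ and setting $\to := \lor$, I need to verify the defining axioms of $\C{SL}$, namely (I), (I$_0$), $x' \approx x$, and (C). First I would observe that, with this definition, the derived unary operation is $x' = x \to 0 = x \lor 0 = x$, which instantly gives $x' \approx x$ and hence (I$_0$), since $0'' = 0$. Commutativity (C) is just commutativity of $\lor$. The real content is (I): $(x \to y) \to z \approx [(z' \to x) \to (y \to z)']'$. Since all primes collapse to the identity, the right-hand side becomes $[(z \lor x) \lor (y \lor z)]$, and this equals $(x \lor y) \lor z$ by the semilattice axioms — this is exactly the content of Lemma \ref{C}, so I would just appeal to it.

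For parts (c) and (d), the roundtrip is trivial on the nose: in (c), starting from $\mathbf{A} \in \C{SL}$, the operation $\lor$ in $\mathbf{A^{\lor}}$ is defined as $\to$, and then the operation $\to$ in $(\mathbf{A^{\lor}})^{\to}$ is defined as that $\lor$, which is the original $\to$; since the $0$ is unchanged and the primed operation is definable from $\to$ and $0$, we recover $\mathbf{A}$. Part (d) is symmetric.

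The whole proof is essentially bookkeeping; the only ``substantive'' input is Theorem \ref{Theo_I10_IDcapAssoc} (to get associativity in (a)) and Lemma \ref{C} (to get axiom (I) in (b)), and both have been set up explicitly for this purpose. The main thing to be careful about is keeping track of how $x'$ is defined from $\to$ and $0$ on one side and from $\lor$ and $0$ on the other, but in $\C{SL}$ both yield $x' = x$ so the two viewpoints match without friction.
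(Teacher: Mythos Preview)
Your proposal is correct and follows essentially the same approach as the paper: the paper's proof simply cites Lemma~\ref{B} and Theorem~\ref{Theo_I10_IDcapAssoc} for part (a), Lemma~\ref{C} for part (b), and leaves (c) and (d) to the reader. Your write-up is a faithful unpacking of exactly these references, with the same key ingredients in the same roles.
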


\begin{proof}
	(a) follows from Lemma \ref{B} and Theorem \ref{Theo_I10_IDcapAssoc},  
and (b) follows from Lemma \ref{C}.  The rest of the proof is left to the reader.
\end{proof}

The following corollary gives an axiomatization for $\C{V}(\mathbf{2_s})$.
\begin{corollary}\label{CorSL}
	$\C{V}(\mathbf{2_s}) = \C{S}^0 = \C{C} \cap \C{I}_{1,0} = {\C{SL}}$.
\end{corollary}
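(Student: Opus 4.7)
The plan is to handle the three equalities in the chain in turn, leaving the only substantive one for last. The equality $\C{C} \cap \C{I}_{1,0} = \C{SL}$ is immediate from the very definition of $\C{SL}$ (given immediately before Lemma \ref{B}). The equality $\C{SL} = \C{S}^0$ is exactly the content of the preceding theorem on term-equivalence: under the translation identifying $\to$ with $\lor$, the two varieties have precisely the same algebras.

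Thus the real work is $\C{V}(\mathbf{2_s}) = \C{S}^0$. The inclusion $\C{V}(\mathbf{2_s}) \subseteq \C{S}^0$ is routine: $\mathbf{2_s}^{\lor}$ is the standard two-element join-semilattice with least element $0$, and so belongs to $\C{S}^0$; the conclusion then follows since $\C{S}^0$ is closed under $\mathbf{H}, \mathbf{S}, \mathbf{P}$.

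For the reverse inclusion $\C{S}^0 \subseteq \C{V}(\mathbf{2_s})$, my plan is to realize every member of $\C{S}^0$ as a subdirect product of copies of $\mathbf{2_s}^{\lor}$. Let $\mathbf{S} = \langle S, \lor, 0\rangle \in \C{S}^0$ with its natural partial order $x \leqslant y \iff x \lor y = y$. For each $a \in S$, define $\pi_a : S \to \{0,1\}$ by
\[
\pi_a(x) = \begin{cases} 0 & \text{if } x \leqslant a, \\ 1 & \text{otherwise.} \end{cases}
\]
Each $\pi_a$ is a surjective homomorphism onto $\mathbf{2_s}^{\lor}$: we have $\pi_a(0) = 0$ since $0 \leqslant a$, and the elementary semilattice equivalence $x \lor y \leqslant a \iff x \leqslant a \text{ and } y \leqslant a$ shows $\pi_a$ preserves $\lor$. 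Moreover, the family $\{\pi_a : a \in S\}$ separates points, for if $x \neq y$ then one of $x \not\leqslant y$ or $y \not\leqslant x$ must hold, and then $\pi_y$ or $\pi_x$ distinguishes $x$ and $y$. Hence $\mathbf{S}$ embeds as a subdirect product of copies of $\mathbf{2_s}^{\lor}$, placing $\mathbf{S}$ in $\C{V}(\mathbf{2_s})$.

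The only step of any substance is verifying that each $\pi_a$ is a homomorphism, which reduces to the elementary order-theoretic equivalence above; the rest is unwinding definitions and invoking the preceding theorem together with standard universal algebra.
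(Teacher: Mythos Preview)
Your argument is correct and, in fact, more detailed than the paper's treatment: the paper states this corollary without proof, implicitly regarding the fact that $\mathbf{2_s}$ generates the variety of join-semilattices with $0$ as folklore, while you have spelled out the standard subdirect representation via the principal-filter maps $\pi_a$. One small inaccuracy: your claim that \emph{each} $\pi_a$ is surjective onto $\mathbf{2_s}^{\lor}$ fails when $a$ is a greatest element of $S$ (then $\pi_a$ collapses $S$ to $\{0\}$), but this does not affect the conclusion, since the point-separating family you exhibit still gives an embedding of $\mathbf S$ into a product of copies of $\mathbf{2_s}^{\lor}$ (and trivial algebras), which suffices for $\mathbf S \in \C{V}(\mathbf{2_s})$.
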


\begin{corollary}
	The class of congruence lattices $\mathbf{Con\ A}$, where $\mathbf{A} \in \C{I}$, does not satisfy any non-trivial lattice identities.  In particular, the variety $\C{I}$ is neither congruence-distributive, nor congruence-modular.
\end{corollary}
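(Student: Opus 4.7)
The plan is to leverage Corollary~\ref{CorSL}, which identifies the variety $\C{SL}$ of join-semilattices with least element as a subvariety of $\C{I}$ (namely $\C{SL} = \C{C} \cap \C{I}_{1,0}$, term-equivalent to $\C{S}^0$). Once $\C{SL} \subseteq \C{I}$ is in hand, every algebra in $\C{SL}$ is (up to the term-equivalence of the corollary) an algebra in $\C{I}$ with the same underlying set and the same congruences. Therefore the class of congruence lattices of algebras in $\C{I}$ contains the class of congruence lattices of semilattices.

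Next I would invoke the classical theorem of Freese and Nation~\cite{FrNa73}, which asserts that the congruence lattices of semilattices generate the variety of all lattices; equivalently, the class $\{\mathbf{Con\ S} : \mathbf{S} \in \C{S}^0\}$ satisfies no nontrivial lattice identity. Combining this with the previous paragraph, the class $\{\mathbf{Con\ A} : \mathbf{A} \in \C{I}\}$ cannot satisfy any nontrivial lattice identity either, since it already contains a family with this property.

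For the ``In particular'' clause, I would just note that congruence-distributivity and congruence-modularity are defined by the nontrivial lattice identities of distributivity and modularity respectively. Since $\C{I}$ fails every nontrivial lattice identity on its congruences, it fails these two in particular, so $\C{I}$ is neither congruence-distributive nor congruence-modular.

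The main obstacle is essentially rhetorical rather than mathematical: the substantive content is entirely imported from \cite{FrNa73}, so the only thing to verify is that the reduction to semilattices is legitimate, i.e., that the term-equivalence between $\C{SL}$ and $\C{S}^0$ in Corollary~\ref{CorSL} preserves congruences. This is immediate because term-equivalent algebras have identical congruence lattices (both $\to$ and $\lor$ are terms of the other, so any equivalence relation compatible with one operation is compatible with the other). Hence the proof reduces to a one-line citation once this observation is made.
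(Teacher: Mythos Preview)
Your proposal is correct and follows essentially the same approach as the paper: both rely on the preceding corollary identifying $\C{SL}$ as a subvariety of $\C{I}$ term-equivalent to $\C{S}^0$, and then invoke the Freese--Nation theorem from \cite{FrNa73}. The paper's proof is simply a one-line citation of \cite{FrNa73}, whereas you have (correctly) unpacked the implicit steps about term-equivalence preserving congruences and spelled out the ``in particular'' clause.
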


\begin{proof}
It is well known (see \cite{FrNa73}) that the class of congruence lattices of semilattices does not satisfy any nontrivial lattice identities.
\end{proof}

\subsection{A characterization of the variety  $\C{V}(\mathbf{2_z})$}

Recall from \cite{sankappanavarMorgan2012} $\C{Z}$ denotes the subvariety of $\C{I}$ defined by the identity:
\[ x \to y \approx 0.\]
Let $\mathbf{Eq(A)}$ denote the lattice of equivalence relations of $\mathbf{A}$.\\

The following lemma is obvious.
\begin{lemma}
	Let $\mathbf{A} \in \C{Z}$.  Then
	\begin{enumerate}[{\rm(1)}]   
		\item
		$\mathbf{Con\ A} =  \mathbf{Eq(A)}$,
		\item  $\mathbf{2_z}$ is the only nontrivial subdirectly irreducible algebra in $\C{Z}$.
	\end{enumerate}
\end{lemma}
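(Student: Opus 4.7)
The plan is to observe that in $\C{Z}$ the binary operation $\to$ is the constant function with value $0$, so essentially there is no algebraic structure to preserve. For part (1), I would let $\theta$ be an arbitrary equivalence relation on $A$ and check the substitution property: given $(a,b),(c,d)\in\theta$, the pair $(a\to c,b\to d)$ is just $(0,0)$, which lies in $\theta$ by reflexivity. The constant $0$ is trivially preserved. Hence every equivalence relation on $A$ is a congruence, which together with the obvious inclusion $\mathbf{Con\ A}\subseteq\mathbf{Eq}(A)$ gives equality.

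For part (2), the point is to translate subdirect irreducibility into a condition on the equivalence lattice of $A$. An algebra is subdirectly irreducible iff its congruence lattice has a (unique) monolith, i.e.\ a least non-identity congruence. By part (1) this is the same as asking $\mathbf{Eq}(A)$ to have a unique atom. I would verify the well-known fact that the equivalence lattice $\mathbf{Eq}(A)$ has a monolith if and only if $|A|\le 2$: if $|A|\ge 3$ then any two distinct $2$-element identifications $\{a,b\}$ and $\{a,c\}$ give incomparable atoms of $\mathbf{Eq}(A)$, hence no monolith; and if $|A|=2$ the lattice has exactly one non-identity element, which is trivially the monolith. Therefore every nontrivial subdirectly irreducible $\mathbf{A}\in\C{Z}$ has exactly two elements, and since the only $2$-element algebra in $\C{Z}$ with constant $0$ is $\mathbf{2}_z$, the conclusion follows.

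I do not expect any real obstacle here, since the variety $\C{Z}$ collapses all algebraic structure to a constant, reducing the problem to pure set-theoretic combinatorics on the equivalence lattice; the only care needed is to phrase the monolith argument cleanly.
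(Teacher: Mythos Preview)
Your proposal is correct and is exactly the argument the paper has in mind; indeed, the paper simply declares ``The following lemma is obvious'' and gives no proof. Your write-up supplies precisely the routine details (constant operation $\Rightarrow$ every equivalence is a congruence, and the monolith argument for $\mathbf{Eq}(A)$) that the authors regarded as obvious.
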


\begin{corollary}
	$\C{Z} = {\C V(\mathbf{2_z)}}$.
\end{corollary}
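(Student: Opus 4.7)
The plan is to establish both inclusions using the preceding lemma, which does the heavy lifting. The inclusion $\C{V}(\mathbf{2_z}) \subseteq \C{Z}$ is immediate, since $\mathbf{2_z}$ visibly satisfies $x \to y \approx 0$ (inspecting its operation table), and $\C{Z}$ is a variety, hence closed under $\mathbf{H}$, $\mathbf{S}$, $\mathbf{P}$.

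For the reverse inclusion $\C{Z} \subseteq \C{V}(\mathbf{2_z})$, I would invoke Birkhoff's subdirect representation theorem: every algebra $\mathbf{A} \in \C{Z}$ is isomorphic to a subdirect product of subdirectly irreducible homomorphic images of itself. Each such subdirectly irreducible factor lies in $\C{Z}$ (since $\C{Z}$ is closed under $\mathbf{H}$), so by part (2) of the preceding lemma, every nontrivial subdirectly irreducible factor is isomorphic to $\mathbf{2_z}$; the trivial algebra is of course in $\C{V}(\mathbf{2_z})$ as well. Therefore $\mathbf{A}$ embeds into a power of $\mathbf{2_z}$ (possibly together with a trivial factor, which does not affect anything), and hence $\mathbf{A} \in \mathbf{SP}(\mathbf{2_z}) \subseteq \C{V}(\mathbf{2_z})$.

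There is essentially no obstacle here, since the nontrivial content was already placed in the preceding lemma: the identification $\mathbf{Con}\,\mathbf{A} = \mathbf{Eq}(A)$ guarantees that the only subdirectly irreducible algebras in $\C{Z}$ are those whose underlying set has an equivalence lattice with a unique atom above the diagonal, i.e., those of cardinality at most $2$. The only detail to mention is that one should note $\C{Z}$ is indeed a variety (defined by the equation $x \to y \approx 0$ relative to $\C{I}$), so Birkhoff's theorem applies. The proof is therefore a two-line argument citing the preceding lemma and Birkhoff's subdirect representation theorem.
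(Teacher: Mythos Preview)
Your proposal is correct and follows exactly the approach the paper intends: the paper states the corollary without proof, treating it as an immediate consequence of part~(2) of the preceding lemma via Birkhoff's subdirect representation theorem, which is precisely what you spell out.
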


We end this section by noting that the variety $\C V(\mathbf{2_b})$, generated by the 2-element Boolean algebra $\mathbf{2_b}$, was axiomatized in \cite{sankappanavarMorgan2012}.

\section{The varieties $\C{MC}$,  $\C{MID}$, ${\C A}$ and $\C{C}$}\label{S:12}

Our goal, in this section, is to prove that 
\[  {\C{MC}}  \cap   \C{MID}  \cap  {\C A}  =  {\C SL} 
 \subset   \C{CP}. \]

\begin{lemma} \label{Lemma_on_MC_MID_Assoc}
	Let $\mathbf A \in  {\C{MC}}  \cap   \C{MID}  \cap  {\C A}$.     
		Then $\mathbf A$ satisfies
	\begin{enumerate}[{\rm(a)}]  
		\item   $x \to (x \to 0') \approx x$,  
		\item   $x \lor y \approx x \to [0 \to \{y \to (0 \to (0 \to 0'))\}]$, 
		\item   $x \lor y \approx x \to (0 \to y'), $ 
		\item   $0 \lor 0 \approx 0$, 
		\item   $0' \approx 0$,  
		\item   $x \to x'  \approx x $, \label{180416_01}
		\item   $x \to [y  \to (x \to y')] \approx x  \to y$,  
		\item   $x \to y' \approx y  \to x'$ {\rm (CP)}, 
		\item   $x \to (y \to x') \approx y \to x$, 
		\item   $x' \approx x$. 	
	\end{enumerate}
\end{lemma}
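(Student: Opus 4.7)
The strategy is to bootstrap to item~(j) ($x'\approx x$) as quickly as possible, after which $\mathbf{A}$ lies in $\C{I}_{1,0}\cap\C{C}=\C{SL}$ by Corollary~\ref{CorSL}, and every remaining identity becomes a triviality about the join-semilattice operation $\to\,=\,\lor$.

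First I would derive CP (item~(h)) from the $\C{MC}$ and $\C{MID}$ hypotheses via Theorem~\ref{Theo_300615_01}: SCP ($x\to y\approx y'\to x'$) holds in $\C{MC}\cap\C{I}_{2,0}$, and substituting $y\mapsto y'$ together with $x''\approx x$ yields $x\to y'\approx y\to x'$, which is exactly~(h).

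Next I would derive ID ($x\to x\approx x$) using only MID and associativity. The computation is
\[
(x\to x)'\approx(x\to x)\to 0\overset{(A)}{\approx} x\to(x\to 0)=x\to x'\approx x',
\]
where the final step applies Lemma~\ref{general_properties_equiv}(d) to $x'$ together with $x''\approx x$. Taking $'$ of both sides and using $x''\approx x$ again gives $x\to x\approx x$. With ID and $\C{A}$ in hand, Theorem~\ref{Theo_I10_IDcapAssoc} forces $\mathbf{A}\in\C{I}_{1,0}$, which is item~(j).

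Under (j), every remaining item collapses. Item~(e) is immediate ($0'=0$), and the derived operations satisfy $x\lor y=(x'\to y'')''=x\to y$, converting each $\lor$-statement into a $\to$-statement in the semilattice $\mathbf{A}^\lor$. Hence (a) becomes $x\to(x\to 0)=x\to x=x$ by ID; (d) is $0\to 0=0$; (f) is $x\to x=x$; (b) reduces to $x\lor(0\lor(y\lor(0\lor 0)))=x\lor y$; (c) is $x\lor(0\lor y)=x\lor y$; (g) telescopes via ID and associativity to $x\to y$; and (i) reads $x\lor(y\lor x)=y\lor x$, all of which are obvious semilattice identities.

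The main obstacle is the two-line derivation of ID from $\C{MID}\cap\C{A}$; once spotted, Theorem~\ref{Theo_I10_IDcapAssoc} drops us into $\C{I}_{1,0}$ and the remaining items follow mechanically. The $\C{MC}$ hypothesis is used essentially only for CP, and combined with (j) it upgrades $\C{I}_{1,0}$ to $\C{SL}$.
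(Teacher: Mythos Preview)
Your argument is correct and constitutes a genuinely different route from the paper's.

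The paper proves items (a)--(j) in linear order, each by direct manipulation using (A), (MC), and (MID), so that (j) appears only at the very end. Your approach instead observes that the computation behind item~(a) already yields ID: in $\C{I}_{2,0}$ one has $x\to x'\approx x'$ (substitute $x'$ into Lemma~\ref{general_properties_equiv}(d)), so by (A) $(x\to x)'\approx x\to x'\approx x'$, whence $x\to x\approx x$. You then invoke the previously established Theorem~\ref{Theo_I10_IDcapAssoc} ($\C{I}_{1,0}=\C{ID}\cap\C{A}$) to obtain (j) immediately, and finish by noting that under (j) and (MC) the algebra is a $\lor$-semilattice with $0$ (Corollary~\ref{CorSL}), so every remaining identity is a one-line semilattice calculation.

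What each approach buys: the paper's proof is self-contained and does not appeal to Theorem~\ref{Theo_I10_IDcapAssoc} (the nontrivial direction of that theorem requires the rather long Lemma~\ref{IDAssoc}), so it exhibits the chain (a)$\Rightarrow\cdots\Rightarrow$(j) within $\C{MC}\cap\C{MID}\cap\C{A}$ by elementary means. Your route is considerably shorter because it reuses machinery already in place; in particular it shows that (MC) is needed only for (h) and for upgrading $\C{I}_{1,0}$ to $\C{SL}$, and that the heart of the lemma is really just the two-line derivation of ID plus the cited characterization of $\C{I}_{1,0}$. There is no circularity, since Theorems~\ref{Theo_300615_01} and~\ref{Theo_I10_IDcapAssoc} and Corollary~\ref{CorSL} are all established before Lemma~\ref{Lemma_on_MC_MID_Assoc}.
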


\begin{proof}
		(a):	
\quad	$	x \to (x \to 0') \overset{(A)}{\approx}  x  \to [(x \to 0) \to 0] $  
\begin{align*}
\noindent&\overset{(A)}{\approx}  (x  \to x') \to 0     
\overset{\text{def of }\land}{\approx}  x  \land x   
\overset{ (MID)}{\approx}  x.  
\end{align*}	
		
		(b):
\quad	$x \lor y  \approx  (x' \to y'')''  
\overset{(A)}{\approx}  [x' \to (y \to 0')]'' $ 
\begin{align*}
&\overset{(A)}{\approx}   [x \to \{0 \to (y  \to 0')\}]''  
\overset{(A)}{\approx}  [x \to \{0 \to (y  \to 0')\}']' \\  
&\overset{(A)}{\approx}   [x \to \{0 \to (y  \to 0')'\}]'  
\overset{(A)}{\approx}  [x \to \{0 \to (y  \to 0'')\}]' \\ 
&\overset{(A)}{\approx}  x \to [0 \to (y  \to 0'')]'   
\overset{(A)}{ \approx}  x \to [0 \to (y  \to 0'')'] \\  
&\overset{(A)}{\approx}  x \to [0 \to (y  \to 0''')]   
\overset{(A)}{ \approx}  x \to [0 \to \{y  \to (0 \to (0 \to 0'))\}] .    
\end{align*}

		(c):  This is immediate from (a) and (b).
		
		(d):  From (c) we have $0 \lor 0 = 0 \to (0 \to 0')$.  Now use (a).
		
		(e): From associativity we get $0 \to 0'=0$.  Hence, $0'=0 \to (0 \to 0') = 0$ by (a).
		
		(f):  This is immediate from (a) and (e).
		
		(g):
	$	x \to [y  \to (x \to y')]   \overset{(A)}{\approx}  (x \to y)  \to (x \to y)'   
\overset{(f)}{ \approx}  x \to y.$

		(h):
\quad	$	x \to y'  \overset{(e)}{\approx}   x \to (y \to 0')  
\overset{(A)}{ \approx}  x \to y'' $
\begin{align*}
\noindent&\overset{(A)}{ \approx}   (x \to y')'    
\approx  x \land y                	
\overset{(MC)}{\approx}  y \land x.    
\end{align*}
		Hence,
		$x \to y'
\overset{(A)}{ \approx}  y  \to x''       
\overset{(A)}{ \approx}   y  \to (x \to 0')   
\overset{(e)}{ \approx}  y  \to x' .$

		(i):
	$ x \to (y \to x')   \overset{(A)}{ \approx}  x \to (y \to x)' $   
\begin{align*}	
&\overset{(h)}{\approx}  (y \to x) \to x'         
		\overset{(A)}{ \approx}  y \to  (x \to x')    
 \overset{(\ref{180416_01})}{\approx}  y \to x.    
\end{align*}

		(j): 
	$	x'  \overset{(g)}{\approx}  x \to [0 \to (x \to 0')]   
\overset{(i)}{\approx}  x \to (x \to 0)    
\overset{(f)}{ \approx}   x.$   			
\end{proof}

\begin{theorem} \label{Theorem_MC_MID_in_Assoc_C} 
	$\C{MC}  \cap   \C{MID}  \cap  {\C A}  \subseteq  \C{C} \cap \C{I}_{1,0}   \cap  \C{CP}$.	
\end{theorem}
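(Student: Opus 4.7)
The plan is to observe that essentially all the work has been done in Lemma \ref{Lemma_on_MC_MID_Assoc}, and the theorem follows by merely assembling parts (h) and (j) of that lemma. Let $\mathbf{A} \in \C{MC}\, \cap\, \C{MID}\, \cap\, \C{A}$; I need to show $\mathbf{A}$ satisfies the three identities $x' \approx x$ (for $\C{I}_{1,0}$), $x \to y' \approx y \to x'$ (for $\C{CP}$), and $x \to y \approx y \to x$ (for $\C{C}$).

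The first identity is exactly Lemma \ref{Lemma_on_MC_MID_Assoc}(j), so $\mathbf{A} \in \C{I}_{1,0}$. The second identity is exactly Lemma \ref{Lemma_on_MC_MID_Assoc}(h), so $\mathbf{A} \in \C{CP}$. For the third, I would combine the two: using (j) to replace $y$ by $y'$ on the left, then (h) to swap, then (j) again to replace $x'$ by $x$, namely
\[
x \to y \;\overset{(j)}{\approx}\; x \to y' \;\overset{(h)}{\approx}\; y \to x' \;\overset{(j)}{\approx}\; y \to x,
\]
so $\mathbf{A} \in \C{C}$.

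Since no step requires anything beyond the already-proved lemma, there is no real obstacle; the only thing to be careful about is the order in which (h) and (j) are invoked, since (h) is stated in the form with primes and one must use (j) to strip them. The harder work was all packed into Lemma \ref{Lemma_on_MC_MID_Assoc} itself, whose proof establishes the collapse $0' \approx 0$ and $x \to x' \approx x$ and then bootstraps these to produce (h) and (j); the present theorem is then immediate.
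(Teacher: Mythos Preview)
Your proof is correct and is essentially identical to the paper's own argument: the paper also derives commutativity via the chain $x \to y \approx x \to y' \approx y \to x' \approx y \to x$ using parts (j), (h), (j) of Lemma~\ref{Lemma_on_MC_MID_Assoc}, and then notes that (h) and (j) directly give membership in $\C{CP}$ and $\C{I}_{1,0}$.
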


\begin{proof}
	Let $\mathbf{A}  \in  \C{MC}  \cap  {\C MID}  \cap  {\C A}$.   
	Then
$	x \to y  \overset{\ref{Lemma_on_MC_MID_Assoc}(j)}{\approx} x \to y'   
\overset{\ref{Lemma_on_MC_MID_Assoc}(h)}{\approx}  y \to x'    
\overset{\ref{Lemma_on_MC_MID_Assoc}(j)}{\approx}  y \to x.$ 
In view of (h) and (j) of Lemma \ref{Lemma_on_MC_MID_Assoc}, the proof is now complete. 
\end{proof}

  The following corollary is immediate from the preceding theorem and Corollary \ref{CorSL} and provides another axiomatization of $\C{V(\mathbf{2_s})}$.
\begin{corollary}\label{Cor_MC_ID_Assoc}
	$\C{MC} \cap   \C{MID}  \cap  {\C A}  =  \C{SL}  =  \C{V(\mathbf{2_s})} \subset  \C{CP}$.
\end{corollary}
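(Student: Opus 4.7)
The plan is to handle three separate assertions packaged into this corollary: the equality $\C{MC}\cap\C{MID}\cap\C{A}=\C{SL}$, the equality $\C{SL}=\C{V}(\mathbf{2_s})$, and the strict containment $\C{SL}\subset\C{CP}$. The middle equality is already given by Corollary \ref{CorSL}, so I will invoke it without reproof. The first equality is where both preceding results combine; the strictness at the end is by exhibiting a witness.

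For $\C{MC}\cap\C{MID}\cap\C{A}\subseteq\C{SL}$, I would just quote Theorem \ref{Theorem_MC_MID_in_Assoc_C}, which already shows $\C{MC}\cap\C{MID}\cap\C{A}\subseteq\C{C}\cap\C{I}_{1,0}\cap\C{CP}$, and drop the $\C{CP}$ factor so that the right-hand side becomes $\C{C}\cap\C{I}_{1,0}=\C{SL}$ by definition. For the reverse inclusion $\C{SL}\subseteq\C{MC}\cap\C{MID}\cap\C{A}$, let $\mathbf{A}\in\C{SL}$, so that $\mathbf{A}\models x'\approx x$ and $\mathbf{A}\models x\to y\approx y\to x$. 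Then $x\land y=(x\to y')'=(x\to y)'=x\to y$, using first $y'\approx y$ and then $z'\approx z$. Commutativity of $\to$ (C) therefore yields (MC): $x\land y=x\to y=y\to x=y\land x$. Lemma \ref{B} gives $\mathbf{A}\models x\to x\approx x$, hence $x\land x=x\to x=x$, which is (MID). For associativity, I would use the term-equivalence with $\C{S}^0$ established earlier in the section: since $\to$ coincides with the semilattice join $\lor$ and $\lor$ is associative, $(x\to y)\to z=(x\lor y)\lor z=x\lor(y\lor z)=x\to(y\to z)$, which is (A). Combined with Corollary \ref{CorSL}, this proves the string of equalities.

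For the final strict containment $\C{SL}\subset\C{CP}$, containment is immediate: by the equality just established and Theorem \ref{Theorem_MC_MID_in_Assoc_C}, $\C{SL}=\C{MC}\cap\C{MID}\cap\C{A}\subseteq\C{CP}$. (One could also verify it directly: in $\C{SL}$, $x\to y'=x\to y=y\to x=y\to x'$ using $x'\approx x$ and (C).) For strictness I would exhibit the two-element Boolean algebra $\mathbf{2_b}$ as a witness: $\mathbf{2_b}\in\C{BA}\subseteq\C{DM}\subseteq\C{CP}$, but $0'=1\neq 0$ in $\mathbf{2_b}$, so $\mathbf{2_b}\notin\C{I}_{1,0}$ and hence $\mathbf{2_b}\notin\C{SL}$.

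There is really no technical obstacle in this proof; the content is entirely bookkeeping, and the only point requiring a brief verification is the reverse inclusion in the first equality, which I handled above by reading off (MC), (MID), (A) directly from the defining identities of $\C{SL}$ together with Lemma \ref{B}.
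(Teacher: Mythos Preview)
Your proposal is correct and follows essentially the same approach as the paper, which simply states that the corollary is immediate from Theorem \ref{Theorem_MC_MID_in_Assoc_C} and Corollary \ref{CorSL}. You supply more detail than the paper does: the paper leaves the reverse inclusion $\C{SL}\subseteq\C{MC}\cap\C{MID}\cap\C{A}$ and the strictness of $\subset\C{CP}$ implicit (presumably because one can verify (MC), (MID), (A) directly on $\mathbf{2_s}$, and exhibit $\mathbf{2_b}\in\C{CP}\setminus\C{SL}$), whereas you spell these out. One minor remark: for associativity in $\C{SL}$ you could alternatively invoke Theorem \ref{Theo_I10_IDcapAssoc}, which gives $\C{I}_{1,0}\subseteq\C{A}$ directly, rather than going through the term-equivalence with $\C{S}^0$.
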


\section{The varieties $\C{MC}$, $\C{ID}$, $\C{C}$ and $\C{I}_{1,0} $}\label{S:13}

The purpose of this section is to show that $\C{MC}  \cap   \C{ID}   =   \C{C}  \cap 
 \C{I}_{1,0} $.

\begin{lemma} \label{Lemma_010715_01}
	Let $\mathbf{A} \in \C{MC}  \cap   \C{ID}$.   Then $\mathbf{A}$ satisfies:
	\begin{enumerate}[{\rm(a)}]  
		\item   $[(0 \to x) \to y'']' \approx (x \to y)'$,   
		\item   $(0 \to x')' \approx x'' $,   
		\item   $(x \to x'')' \approx x'' $,  
		\item   $x'' \approx x'$,
		\item   $[(x \to y) \to z]'  \approx  (x \to y) \to z$.
		\end{enumerate}	
\end{lemma}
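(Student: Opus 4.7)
The plan is to prove (a)--(e) in order, with (a)--(c) following from short applications of (I), MC, and ID, and (d)--(e) then falling out of (a) together with the earlier items.

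As a preliminary, note that (ID) applied to $0$ gives $0 \to 0 \approx 0$, hence $0' = 0 \to 0 = 0$; I will use this silently. For (a), specialise axiom (I) to $z := 0$:
\[
(x \to y)' \approx (x \to y) \to 0 \approx [(0' \to x) \to (y \to 0)']',
\]
and rewrite $0'$ as $0$ and $(y \to 0)'$ as $y''$. For (b), write MC as $(x \to y')' \approx (y \to x')'$ and instantiate $x := 0,\ y := x$ to get $(0 \to x')' \approx (x \to 0')' \approx (x \to 0)' \approx x''$. For (c), instantiate MC at $y := x'$ to get $(x \to x'')' \approx (x' \to x')' \approx (x')' \approx x''$, where the middle step uses (ID).

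The main step is (d). I apply (a) twice. Taking $y := x$ in (a) and using (ID):
\[
[(0 \to x) \to x'']' \approx (x \to x)' \approx x'.
\]
Taking $y := x''$ in (a) and using (c):
\[
[(0 \to x) \to x'''']' \approx (x \to x'')' \approx x''.
\]
Lemma \ref{SankaTheo7.6} with $y := 0$ gives $x'''' \approx x''$, so the second display simplifies to $[(0 \to x) \to x'']' \approx x''$. Comparing with the first display yields $x' \approx x''$, which is (d).

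For (e), set $w := (x \to y) \to z$. Lemma \ref{Lemma_300315_01}(2) gives $w \approx w''$, while (d) applied to $w$ gives $w'' \approx w'$; combining, $w \approx w'$, which is (e).

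The only step requiring any ingenuity is (d): the trick is that two distinct instantiations of (a) both compute (after using the $\C{I}$-identity $x'''' \approx x''$) the same element $[(0 \to x) \to x'']'$, but produce the two answers $x'$ and $x''$, forcing their equality. Everything else is a single rewrite.
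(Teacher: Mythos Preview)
Your proof is correct and follows essentially the same route as the paper. The core of both arguments is item (d): compute $[(0\to x)\to x'']'$ in two ways, once via (a) with $y:=x$ and (ID) to obtain $x'$, and once via (c), (I), and the reduction $x''''\approx x''$ to obtain $x''$. The paper writes this as a single chain
\[
x'' \;\approx\; (x\to x'')' \;\approx\; [(0'\to x)\to x'''']' \;\approx\; [(0\to x)\to x'']' \;\approx\; (x\to x)' \;\approx\; x',
\]
while you split it into two instantiations of (a); the content is identical. A few minor differences: you make $0'\approx 0$ (from (ID)) explicit as a preliminary, whereas the paper uses it silently; for (b) and (c) you appeal directly to (MC), while the paper's sketch cites (I), (ID), and Lemma~\ref{SankaLemma7.5}; and for (e) you invoke Lemma~\ref{Lemma_300315_01}(\ref{060415_01}) to get $w\approx w''$, while the paper re-derives that fact via (I) on the spot. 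None of these change the argument in substance.
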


\begin{proof} 
	(a) and (b) are immediate from  (I) and the identity (ID).   
	Use both hypotheses and Lemma \ref{SankaLemma7.5}  
	 to prove (c).

	(d):	$x''  \overset{(c) \text{ and } (I)}{\approx}  [(0' \to x) \to x'''']'    
 \overset{\ref{SankaLemma7.5}}{\approx}  [(0 \to x) \to x'']'  
 \overset{(a)}{\approx}  [x \to x]'   
\overset{(ID)}{\approx}  x'.$

		 (e):
	$	[(x \to y) \to z]'  \overset{(I)}{\approx} [(z' \to x) \to (y \to z)']'' $

$\overset{(d)}{\approx}   [(z' \to x) \to (y \to z)']'       
\overset{(I)}{\approx}  (x \to y) \to z. $
This completes the proof.
\end{proof}

\begin{theorem} \label{Theo_MC_ID_subset_I10_C}
		$\C{MC}  \cap   \C{ID}  =   \C{I}_{1,0}  \cap  \C{C}$. 
\end{theorem}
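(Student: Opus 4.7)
The plan is to prove the two inclusions separately, with essentially all the work concentrated in the reverse direction.

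For the easy direction $\C{I}_{1,0} \cap \C{C} \subseteq \C{MC} \cap \C{ID}$, Lemma \ref{B} already shows that $\C{SL} = \C{I}_{1,0} \cap \C{C}$ satisfies (ID). For (MC), I use $x' \approx x$ throughout: $x \land y = (x \to y')' \approx (x \to y)' \approx x \to y$, and then (C) gives $x \to y \approx y \to x$, so reversing the same chain produces $y \to x \approx y \land x$, yielding $x \land y \approx y \land x$.

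The substantive direction is $\C{MC} \cap \C{ID} \subseteq \C{I}_{1,0} \cap \C{C}$. The key leverage is Lemma \ref{Lemma_010715_01}(e), which asserts $[(x \to y) \to z]' \approx (x \to y) \to z$ in $\C{MC} \cap \C{ID}$. First I specialize $y := x$ and invoke (ID) inside the bracket to collapse this identity to $(x \to z)' \approx x \to z$ for all $x, z$. Next, setting $z := x$ and applying (ID) one more time gives $x' \approx (x \to x)' \approx x \to x \approx x$, which is exactly the defining identity of $\C{I}_{1,0}$. Finally, with $x' \approx x$ now available, (MC) unfolds as $(x \to y')' \approx (y \to x')'$, i.e.\ $(x \to y)' \approx (y \to x)'$, and applying $u' \approx u$ to both sides yields $x \to y \approx y \to x$, namely (C).

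I expect no real obstacle, since Lemma \ref{Lemma_010715_01}(e) has already done the heavy lifting; once it is invoked the rest of the argument is short algebraic rewriting. The one point to be vigilant about is to rely only on (ID), (MC), and lemmas proven earlier, which is legitimate because Lemma \ref{Lemma_010715_01} itself is stated precisely for $\C{MC} \cap \C{ID}$.
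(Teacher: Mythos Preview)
Your proof is correct and follows essentially the same approach as the paper: both use Lemma~\ref{Lemma_010715_01}(e) together with (ID) to obtain $x' \approx x$, and then unwind (MC) with $x' \approx x$ to get (C). The only minor differences are that the paper first writes $x' \approx (x \to x)' \approx [(x \to x) \to x]'$ before invoking Lemma~\ref{Lemma_010715_01}(e), rather than first deducing the general identity $(x \to z)' \approx x \to z$ as you do, and that for the reverse inclusion the paper appeals to Corollary~\ref{CorSL} (i.e., $\C{I}_{1,0} \cap \C{C} = \C{V}(\mathbf{2_s}) \subseteq \C{MC} \cap \C{ID}$) while you give a short direct equational argument via Lemma~\ref{B}.
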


\begin{proof}
First, we wish to prove $\C{MC}  \cap   \C{ID} \subseteq  \C{I}_{1,0}  \cap  \C{C}$.  
By (ID) we have 
$	x' \approx  (x \to x)'  
	 \approx  [(x \to x) \to x]' $  
Hence, $x' \approx   [(x \to x) \to x]   
	 \approx  x$  by Lemma \ref{Lemma_010715_01} (e)  and (ID).

Next, from $x \to y \approx (x \to y)' \approx (x \to y')' \approx x \land y \approx y \land x \approx (y \to x')' \approx y \to x' = y \to x$, we conclude $\mathbf{A} \in \C{C}$.  Thus we have, in view of Corollary \ref{CorSL}, that  
$\C{MC}  \cap   \C{ID}   \subseteq   \C{I}_{1,0}  \cap  \C{C} = 
\C{V(\mathbf{2_s})}  \subseteq  \C{MC} \cap   \C{ID}$, completing the proof.
\end{proof}

The following corollary is immediate from the preceding theorem and Corollary 
\ref{Cor_MC_ID_Assoc}.

\begin{corollary}

$\C{MC}  \cap   \C{ID} = \C{V(\mathbf{2_s})}  =   \C{MC}  \cap   \C{MID}  \cap  {\C A}$.  
\end{corollary}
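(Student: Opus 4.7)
The plan is to observe that this corollary is a pure packaging of results already established, and to chain together the three relevant identifications. First I would invoke Theorem \ref{Theo_MC_ID_subset_I10_C}, which gives $\C{MC} \cap \C{ID} = \C{I}_{1,0} \cap \C{C}$. Next I would invoke Corollary \ref{CorSL}, which provides the identification $\C{I}_{1,0} \cap \C{C} = \C{SL} = \C{V}(\mathbf{2_s})$. Finally I would invoke Corollary \ref{Cor_MC_ID_Assoc}, which identifies $\C{V}(\mathbf{2_s})$ with $\C{MC} \cap \C{MID} \cap \C{A}$. Stringing these three equalities together yields precisely the claim.

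In more detail, the chain reads
\[
\C{MC} \cap \C{ID} \;\overset{\ref{Theo_MC_ID_subset_I10_C}}{=}\; \C{I}_{1,0} \cap \C{C} \;\overset{\ref{CorSL}}{=}\; \C{V}(\mathbf{2_s}) \;\overset{\ref{Cor_MC_ID_Assoc}}{=}\; \C{MC} \cap \C{MID} \cap \C{A}.
\]
Each step is a direct appeal to the cited result, so no further computation is required.

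There is no real obstacle here — this is a bookkeeping corollary. The only thing worth checking is that the equalities are genuinely equalities (not merely inclusions): Theorem \ref{Theo_MC_ID_subset_I10_C} states equality; Corollary \ref{CorSL} explicitly asserts $\C{V}(\mathbf{2_s}) = \C{S}^0 = \C{C} \cap \C{I}_{1,0} = \C{SL}$; and Corollary \ref{Cor_MC_ID_Assoc} asserts $\C{MC} \cap \C{MID} \cap \C{A} = \C{SL} = \C{V}(\mathbf{2_s})$. All three are bona fide equalities, so the chain is sound and the corollary follows at once.
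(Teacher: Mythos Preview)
Your proposal is correct and matches the paper's approach exactly: the paper states that the corollary is immediate from the preceding theorem (Theorem~\ref{Theo_MC_ID_subset_I10_C}) and Corollary~\ref{Cor_MC_ID_Assoc}, which is precisely the chain you wrote out, with your explicit citation of Corollary~\ref{CorSL} simply making visible the step that the paper absorbs into the proof of Theorem~\ref{Theo_MC_ID_subset_I10_C}.
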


\section{Concluding remarks}

These investigations are continued in \cite{CoSa2015b}, \cite{CoSa2015c} and \cite{CoSa2015a}.   
(Note that the implicator groupoids are referred to in those papers as ``implication zroupoids.)  In \cite{CoSa2015b} it is proved  
that the variety $\C{I}_{2,0}$ is a maximal
subvariety of $\C{I}$ with respect to the property that the relation $\leq$, which is defined as follows: 
\[
\text{$x \leq y$ if and only if $(x \to y')' =x$, for $x,y \in \mathbf{A}$ and $\mathbf{A} \in \C{I}$,}
\]
is a partial order. Furthermore, all the finite $\C{I}_{2,0}$-chains, relative to this order, are determined.  
In \cite{CoSa2015c} we describe  all simple algebras in $\C{I}$ and, consequently, we give a description of 
semisimple subvarieties of $\C{I}$.

\cite{CoSa2015a} is a further addtion to the series \cite{sankappanavarMorgan2012},  \cite{CoSa2015b}, \cite{CoSa2015c} and the present paper.  
It studies the structure of the derived algebras $\mathbf{A^{m}} := \langle A, \land, 0 \rangle$ 
and $\mathbf{A^{mj}} :=\langle A, \land, \lor, 0 \rangle$ of $\mathbf{A} \in \mathcal{I}$. 
It also introduces new subvarieties of $\C{I}$ and determines   
their relationships with other subvarieties, both old and new.  In fact, it is shown that, for each $\mathcal{I}$-zroupoid $\mathbf A$, $\mathbf{A}^m$ is a semigroup, which,   
together with Theorem \ref{T:7.3} of this paper  
implies that, for $\mathbf{A} \in \mathcal{I}_{2,0} \cap \mathcal{MC}$, the derived algebra  
$\mathbf{A^{mj}}$ is both a distributive bisemilattice and a Birkhoff system.  
It is also shown in \cite{CoSa2015a} that $\mathcal{CLD} \subset \mathcal{SRD} \subset \mathcal{RD}$, where the varieties $\mathcal{CLD}$, $\mathcal{SRD}$ and $\mathcal{RD}$ are respectively 
defined, relative to $\mathcal{I}$, by: (CLD)  $x \to (y \to z) \approx (x \to z) \to (y \to x)$,  (SRD)    $(x \to y) \to z \approx (z \to x) \to (y \to z)$,  and (RD)  $(x \to y) \to z \approx (x \to z) \to (y \to z)$.  Furthermore, \cite{CoSa2015a} shows the following relationship among some of of the varieties investigated in this paper and $\mathcal{CLD}$:  $\mathcal{C} \subset \ \mathcal{CP} \cap \mathcal{A} \cap \mathcal{MC} \cap \mathcal{CLD}$.  
Both of the results just mentioned are much stronger than the ones that were announced in \cite{sankappanavarMorgan2012}.

\subsection*{Acknowledgment} 
The first author wants to thank the
institutional support of CONICET  (Consejo Nacional de Investigaciones Cient\'ificas y T\'ecnicas).   The authors wish to express their indebtedness to the anonymous referees for their careful reading of the paper.  The authors also wish to acknowledge that \cite{Mc} was a useful tool during the research phase  of this paper.


\section{Appendix}

\begin{proof} {\bf of Lemma \ref{general_properties2}}
	
	Let $a,b,c,d \in A$.
	\begin{itemize}
		\item[(\ref{281014_05})]
		$$
		\begin{array}{lcll}
		(a \to 0') \to b  & = & [(b' \to a) \to (0' \to b)']' & \mbox{from (I)} \\
		& = & [(b' \to a) \to b']' & \mbox{by Lemma \ref{general_properties_equiv} (\ref{TXX})} \\
		& = & [(b' \to a) \to (b' \to b)']' & \mbox{by Lemma \ref{general_properties_equiv} (\ref{LeftImplicationwithtilde})} \\
		& = & (a \to b') \to b. & \mbox{}
		\end{array}
		$$
		\item[(\ref{291014_02})]
		$$
		\begin{array}{lcll}
		a \to (0 \to a)'  & = & (a' \to a) \to (0 \to a)' & \mbox{by Lemma \ref{general_properties_equiv} (\ref{LeftImplicationwithtilde})} \\
		& = & [(a' \to a) \to (0 \to a)']'' & \mbox{} \\
		& = & [(a \to 0) \to a]' & \mbox{using (I)} \\
		& = & [a' \to a]' & \mbox{} \\
		& = & a' & \mbox{by Lemma \ref{general_properties_equiv} (\ref{LeftImplicationwithtilde})}.
		\end{array}
		$$

		\item[(\ref{291014_10})]  
		$$
		\begin{array}{lcll}
		[(b \to a) \to b]' & = & [(b' \to b) \to (a \to b)']'' & \mbox{by (I)} \\
		& = & (b' \to b) \to (a \to b)' & \mbox{} \\
		& = & b \to (a \to b)' & \mbox{by Lemma \ref{general_properties_equiv} (\ref{LeftImplicationwithtilde})} \\
		& = & b'' \to (a \to b)' & \mbox{} \\
		& = & (b' \to 0)' \to (a \to b)'  & \mbox{} \\
		& = & [(b' \to 0)' \to (a \to b)']'' & \mbox{} \\
		& = & [(0 \to a) \to b]' & \mbox{by (I)}.
		\end{array}
		$$
		Hence $(b \to a) \to b = [(b \to a) \to b]'' = [(0 \to a) \to b]'' = (0 \to a) \to b$.\\
		
		\item[(\ref{071114_02})]  
		$$
		\begin{array}{lcll}
		[(a \to 0') \to b]'  & = & [(b' \to a) \to (0' \to b)']'' & \mbox{from (I)} \\
		& = & [(b' \to a) \to b']'' & \mbox{by Lemma \ref{general_properties_equiv} (\ref{TXX})} \\
		& = & (b' \to a) \to b' & \mbox{} \\
		& = & (0 \to a) \to b' & \mbox{by (\ref{291014_10})}.
		\end{array}
		$$
		
		\item[(\ref{311014_03})]
		$$
		\begin{array}{lcll}
		0 \to a   & = & a' \to 0' & \mbox{by Lemma \ref{general_properties} (\ref{cuasiConmutativeOfImplic2})} \\
		& = & (a' \to 0')'' & \mbox{} \\
		& = & [0' \to (a' \to 0')']' & \mbox{by Lemma \ref{general_properties_equiv} (\ref{TXX})} \\
		& = & [(0 \to 0) \to (a' \to 0')']' & \mbox{} \\
		& = & (0 \to a') \to 0' & \mbox{by (I)} \\
		& = & (a \to 0') \to 0' & \mbox{by Lemma \ref{general_properties} (\ref{cuasiConmutativeOfImplic2})} \\
		& = & (a'' \to 0') \to 0' & \mbox{} \\
		& = & [(a' \to 0) \to 0'] \to 0' & \mbox{} \\
		& = & [(a' \to 0'') \to 0'] \to 0' & \mbox{} \\
		& = & [(a' \to 0') \to 0'] \to 0' & \mbox{(\ref{281014_05}) with } y=0'  \\
		& = & [(a' \to 0') \to 0''] \to 0' & \mbox{(\ref{281014_05}) with } y=0' \\
		& = & [(a' \to 0') \to 0] \to 0' & \mbox{} \\
		& = & (a' \to 0')' \to 0' & \mbox{} \\
		& = & 0 \to (a' \to 0')'' & \mbox{by Lemma \ref{general_properties} (\ref{cuasiConmutativeOfImplic2})} \\
		& = & 0 \to (a' \to 0') & \mbox{} \\
		& = & 0 \to (0 \to a) & \mbox{by Lemma \ref{general_properties} (\ref{cuasiConmutativeOfImplic2})}.
		\end{array}
		$$
		
		\item[(\ref{031114_06})]  
		
		$$
		\begin{array}{lcll}
		[a' \to (0 \to b)]'  & = & [(a \to 0) \to (0 \to b)]' & \mbox{} \\
		& = & [\{(0 \to b)' \to a\} \to \{0 \to (0 \to b)\}']'' & \mbox{from (I)} \\
		& = & [(0 \to b)' \to a] \to [0 \to (0 \to b)]' & \mbox{} \\
		& = & [(0 \to b)' \to a] \to (0 \to b)' & \mbox{by (\ref{311014_03})} \\
		& = & [0 \to a] \to (0 \to b)' & \mbox{by (\ref{291014_10})}.
		\end{array}
		$$

		\item[(\ref{291014_09})] By Lemma \ref{general_properties_equiv} (\ref{LeftImplicationwithtilde}) and (I) we have that $[a \to (b \to a)']' = [(a' \to a) \to (b \to a)']' = (a \to b) \to a$.  
		
		\item[(\ref{031114_05})]
		$$
		\begin{array}{lcll}
		0 \to [(0 \to a) \to (0 \to b')'] & = & 0 \to [(0 \to a) \to (b \to 0')'] & \mbox{by Lemma \ref{general_properties} (\ref{cuasiConmutativeOfImplic2})} \\
		& = & 0 \to [(0 \to a) \to (b \to 0')']'' & \mbox{} \\
		& = & 0 \to [(a \to b) \to 0']' & \mbox{from (I)} \\
		& = & [(a \to b) \to 0'] \to 0'  & \mbox{by Lemma \ref{general_properties} (\ref{cuasiConmutativeOfImplic2})} \\
		& = & [(a \to b) \to 0''] \to 0' & \mbox{by  (\ref{281014_05})} \\
		& = & [(a \to b) \to 0] \to 0' & \mbox{} \\
		& = & (a \to b)' \to 0' & \mbox{} \\
		& = & 0 \to (a \to b) & \mbox{by Lemma \ref{general_properties} (\ref{cuasiConmutativeOfImplic2})}.
		\end{array}
		$$
		
		\item[(\ref{071114_03})]
		$$
		\begin{array}{lcll}
		0 \to ((0 \to a) \to b') & = & 0 \to [(a' \to 0') \to b'] & \mbox{by Lemma \ref{general_properties} (\ref{cuasiConmutativeOfImplic2})} \\
		& = & 0 \to [(a' \to 0') \to b']'' & \mbox{} \\
		& = & 0 \to [(0 \to a') \to b'']' & \mbox{by (\ref{071114_02})} \\
		& = &  0 \to [(0 \to a') \to b]' & \mbox{} \\
		& = & [(0 \to a') \to b] \to 0' & \mbox{by Lemma \ref{general_properties} (\ref{cuasiConmutativeOfImplic2})} \\
		& = & [(0 \to (0 \to a')) \to (b \to 0')']' & \mbox{from (I)} \\
		& = & [(0 \to a') \to (b \to 0')']' & \mbox{by (\ref{311014_03})} \\
		& = & (a' \to b) \to 0' & \mbox{from (I)} \\
		& = & 0 \to (a' \to b)' & \mbox{by Lemma \ref{general_properties} (\ref{cuasiConmutativeOfImplic2})} \\
		& = & a \to (0 \to b'). & \mbox{}
		\end{array}
		$$
		
		\item[(\ref{031114_07})]
		$$
		\begin{array}{lcll}
		0 \to (0 \to a)' & = & (0 \to a) \to 0' & \mbox{by Lemma \ref{general_properties} (\ref{cuasiConmutativeOfImplic2})} \\
		& = & (0' \to a) \to 0' & \mbox{by (\ref{291014_10})} \\
		& = & a \to 0' & \mbox{} \\
		& = & 0 \to a' & \mbox{by Lemma \ref{general_properties} (\ref{cuasiConmutativeOfImplic2})}.
		\end{array}
		$$
		
		\item[(\ref{071114_04})]  
		$$
		\begin{array}{lcll}
		0 \to (a \to b) & = & 0 \to [(0 \to a) \to (0 \to b')'] & \mbox{by (\ref{031114_05})} \\
		& = & a \to [0 \to (0 \to b')'] & \mbox{by (\ref{071114_03}) with } x = a, y = 0 \to b' \\
		& = & a \to (0 \to b'') & \mbox{by (\ref{031114_07})} \\
		& = & a \to (0 \to b). & \mbox{}
		\end{array}
		$$
		
		\item[(\ref{291014_03})]
		$$
		\begin{array}{lcll}
		[(b \to a) \to (0 \to a)']' & = & [\{(0 \to a)'' \to b\} \to \{a \to (0 \to a)'\}']'' & \mbox{from (I)} \\
		& = & [(0 \to a)'' \to b] \to [a \to (0 \to a)']' & \mbox{} \\
		& = & [(0 \to a) \to b] \to [a \to (0 \to a)']' & \mbox{} \\
		& = & [(0 \to a) \to b] \to a'' & \mbox{by (\ref{291014_02})} \\
		& = & [(0 \to a) \to b] \to a. & \mbox{}
		\end{array}
		$$
		
		\item[(\ref{291014_05})] It follows immediately from (\ref{291014_03}) since $[[(0 \to a) \to b] \to a]' = [(b \to a) \to (0 \to a)']'' = (b \to a) \to (0 \to a)'$.\\
\ \\ \ \\	 \ \\	
		\item[(\ref{281014_07})]	
	        
		\begin{align*}  
		(0 \to a') \to (b \to a)  &=  (a \to 0') \to (b \to a)\\ 
		&  \qquad \qquad \qquad \text{by Lemma \ref{general_properties} (\ref{cuasiConmutativeOfImplic})} \\
		&=  (a'' \to 0') \to (b \to a)   & \mbox{} \\
		& =  [(a' \to 0) \to 0'] \to (b \to a)   & \mbox{} \\
		 &=  [(a' \to 0'') \to 0'] \to (b \to a)  & \mbox{} \\
		 &=  [(a' \to 0') \to 0'] \to (b \to a)  & \\
		  & \qquad \qquad \qquad \mbox{by item (\ref{281014_05}) with } x=a' \mbox{ and } y = 0' \\
		&=  [(a' \to 0') \to (b \to a)'] \to (b \to a) \\
		& \qquad \qquad \qquad \mbox{by item (\ref{281014_05}) with }  x=a' \to 0', y = b \to a \\
		 &=  [(a' \to 0') \to (b \to a)'] \to [(0' \to b) \to a]  \\
		 & \qquad \qquad \qquad \mbox{by Lemma \ref{general_properties_equiv} (\ref{TXX})} \\
		 &=  (0' \to b) \to a  \\ 
		  & \qquad \qquad \qquad \mbox{by item (\ref{281014_05}) with }  x=a, y = 0', z=b \\
		&=  b \to a   
		\end{align*}

		\item[(\ref{291014_04})]
		$$
		\begin{array}{lcll}
		a' \to (0 \to a) & = & [a \to (0 \to a)'] \to (0 \to a) & \mbox{by (\ref{291014_02})} \\
		& = & [a \to 0'] \to (0 \to a) & \mbox{from (\ref{281014_05}) using } x = a, y = 0 \to a  \\
		& = & [0 \to a'] \to (0 \to a) & \mbox{from Lemma \ref{general_properties} (\ref{cuasiConmutativeOfImplic})} \\
		& = & 0 \to a & \mbox{from (\ref{281014_07}) using } x = a, y = 0.
		\end{array}
		$$
		
		\item[(\ref{291014_06})]
		$$
		\begin{array}{lcll}
		(b \to a)' & = & [(0' \to b) \to a]' & \mbox{using Lemma \ref{general_properties_equiv} (\ref{TXX})} \\
		& = & [(a' \to 0') \to (b \to a)']'' & \mbox{by (I)} \\
		& = & (a' \to 0') \to (b \to a)' & \mbox{} \\
		& = & (0 \to a) \to (b \to a)' & \mbox{by Lemma \ref{general_properties} (\ref{cuasiConmutativeOfImplic2})}.
		\end{array}
		$$
		
		\item[(\ref{291014_07})]  
		\begin{align*}  
		(a \to b) \to (0 \to b)' & =  [\{(0 \to b) \to a\} \to b]'   \qquad \qquad  \mbox{by (\ref{291014_05}) using } x = b, y = a \\
		& =  [\{b' \to (0 \to b)\} \to (a \to b)']''  \qquad \mbox{by (I)} \\
		& =  [b' \to (0 \to b)] \to (a \to b)' & \mbox{} \\
		& =  (0 \to b) \to (a \to b)'   \qquad \qquad \qquad \mbox{by (\ref{291014_04}) using } x = b \\
		& =  (a \to b)'  \qquad \qquad \qquad \mbox{by (\ref{291014_06}) using }  x=b, y = a.
		\end{align*}

		\item[(\ref{071114_01})]  
		$$
		\begin{array}{lcll}
		a \to (0 \to b') & = & [a \to (0 \to b')]'' & \mbox{} \\
		& = & [a'' \to (0 \to b')]'' & \mbox{} \\
		& = & [(0 \to a') \to (0 \to b')]' & \mbox{by (\ref{031114_06})} \\
		& = & [(0 \to a') \to (b \to 0')]' & \mbox{by Lemma \ref{general_properties} (\ref{cuasiConmutativeOfImplic2})} \\
		& = & (a' \to b) \to 0' & \mbox{from (I)} \\
		& = & 0 \to (a' \to b)' & \mbox{by Lemma \ref{general_properties} (\ref{cuasiConmutativeOfImplic2})}.
		\end{array}
		$$

                 \item[(\ref{271114_03})]
		
		\begin{align*}  
		[(a \to b) \to a] \to [(b \to a) \to b] & =  [(a \to b) \to a] \to [b \to (a \to b)']'  \qquad \mbox{by (\ref{291014_09})} \\
		& =  [(a \to b) \to (a \to b)']'  \qquad \mbox{by (I)} \\
		& =  (a \to b)''  \qquad \mbox{by Lemma \ref{general_properties_equiv} (\ref{LeftImplicationwithtilde})} \\
		& =  a \to b. & \mbox{}
		\end{align*}

		\item[(\ref{271114_04})]
		$$
		\begin{array}{lcll}
		[a \to (b \to a')] \to a & = & [0 \to (b \to a')] \to a & \mbox{by (\ref{291014_10})} \\
		& = & [b \to (0 \to a')] \to a & \mbox{by (\ref{071114_04})} \\
		& = & [(a' \to b) \to \{(0 \to a') \to a\}']' & \mbox{by (I)} \\
		& = & [(a' \to b) \to \{(a \to a') \to a\}']' & \mbox{by (\ref{071114_04})} \\
		& = & [(a' \to b) \to (a' \to a)']' & \mbox{by Lemma \ref{general_properties_equiv} (\ref{LeftImplicationwithtilde})} \\
		& = & [(a' \to b) \to a']' & \mbox{by Lemma \ref{general_properties_equiv} (\ref{LeftImplicationwithtilde})} \\
		& = & [a' \to (b \to a')']'' & \mbox{by (\ref{291014_09})} \\
		& = & a' \to (b \to a')'. & \mbox{}
		\end{array}
		$$

\item[(\ref{281114_01})]
		
		\begin{align*}      
		a \to (b \to a') & =  [(a \to (b \to a')) \to a] \to [\{(b \to a') \to a\} \to (b \to a')] \qquad \mbox{by (\ref{271114_03})} \\
		& =  [\{a \to (b \to a')\} \to a] \to [(0 \to a) \to (b \to a')] \qquad \qquad \mbox{by (\ref{291014_10})} \\
		& =  [\{a \to (b \to a')\} \to a] \to (b \to a') \qquad \qquad \mbox{by (\ref{281014_07})} \\
		& =  [a' \to (b \to a')'] \to (b \to a') \qquad \qquad  \mbox{by (\ref{271114_04})} \\
		& =  (a' \to 0') \to (b \to a') \qquad \qquad \mbox{by (\ref{281014_05})} \\
		& =  (0 \to a) \to (b \to a') \qquad \qquad \mbox{by Lemma \ref{general_properties} (\ref{cuasiConmutativeOfImplic2})} \\
		& =  b \to a' \qquad \qquad \qquad \qquad \mbox{by (\ref{281014_07})}.
		\end{align*}

		\item[(\ref{191114_05})]
		$$
		\begin{array}{lcll}
		0 \to (a \to b')' & = & 0 \to (a'' \to b')' & \mbox{} \\
		& = & a' \to (0 \to b'') & \mbox{by (\ref{071114_01})} \\
		& = & a' \to (0 \to b) & \mbox{} \\
		& = & 0 \to (a' \to b) & \mbox{by (\ref{071114_04})}.
		\end{array}
		$$
		
		\item[(\ref{071114_05})] 
		$$
		\begin{array}{lcll}
		(a \to b) \to b' & = & [(a \to b) \to b']'' & \mbox{} \\
		& = & [(b \to a) \to b]' & \mbox{by (\ref{291014_09})} \\
		& = & [(b \to a) \to b'']' & \mbox{} \\
		& = & [(b \to a) \to (b'' \to b')']' & \mbox{by Lemma \ref{general_properties_equiv} (\ref{LeftImplicationwithtilde})} \\
		& = & [(b \to a) \to (b \to b')']' & \mbox{} \\
		& = & (a \to b) \to b' & \mbox{by (I)}.
		\end{array}
		$$
		
		\item[(\ref{181114_04})]
		$$
		\begin{array}{lcll}
		a \to [(b \to c') \to a]' & = & [\{a \to (b \to c')\} \to a]' & \mbox{by (\ref{291014_09})} \\
		& = & [\{0 \to (b \to c')\} \to a]' & \mbox{by (\ref{291014_10})} \\
		& = & [\{b \to (0 \to c')\} \to a]' & \mbox{by (\ref{071114_04})} \\
		& = & (a' \to b) \to [(0 \to c') \to a]' & \mbox{by (I)} \\
		& = & (a' \to b) \to [(c \to 0') \to a]' & \mbox{by Lemma \ref{general_properties} (\ref{cuasiConmutativeOfImplic2})} \\
		& = & (a' \to b) \to [(0 \to c) \to a'] & \mbox{by (\ref{071114_02})}.
		\end{array}
		$$
		
		\item[(\ref{181114_05})]
		First we have that
		$$
		\begin{array}{lcll}
		\{(0 \to a) \to b'\}' \to c & = & [(a' \to 0') \to b']' \to c  & \mbox{by Lemma \ref{general_properties} (\ref{cuasiConmutativeOfImplic2})} \\
		& = & [(0 \to a') \to b] \to c & \mbox{by (\ref{071114_02})} \\
		& = & [(a \to 0') \to b] \to c. & \mbox{by Lemma \ref{general_properties} (\ref{cuasiConmutativeOfImplic2})}.
		\end{array}
		$$
		Hence
		\begin{equation} \label{181114_06}
			[(0 \to a) \to b']' \to c  =  [(a \to 0') \to b] \to c.
		\end{equation}
                 Then
		\begin{align*}  
		(c \to d) \to [(0 \to a) \to b'] \\
		& =  [\{((0 \to a) \to b')' \to c\} \to \{d \to ((0 \to a) \to b')\}']'  \\
		&  \qquad \qquad \qquad  \mbox{by (I)} \\
		& =  [\{((a \to 0') \to b) \to c\} \to \{d \to ((0 \to a) \to b')\}']' \\
		&  \qquad \qquad  \mbox{by equation (\ref{181114_06})}.
		\end{align*}

                  \item[(\ref{080415_01})]
				
				\begin{align*} 
				(c \to a) \to (b \to c)	& =  [\{(b \to c)' \to c\} \to \{a \to (b \to c)\}']' & \mbox{} \\
				& =  [\{(c' \to (b \to c)) \to (0 \to c)'\}' \to \{a \to (b \to c)\}']'  \quad \quad \mbox{by (I)} \\
			& =  [\{(b \to c) \to (0 \to c)'\}' \to \{a \to (b \to c)\}']'  \qquad \quad  \mbox{by  (\ref{281114_01})} \\
				& =  [(b \to c)'' \to \{a \to (b \to c)\}']'  \qquad \qquad \mbox{by (\ref{291014_07})} \\
				& =  [\{(b \to c)' \to 0\} \to \{a \to (b \to c)\}']' & \mbox{} \\
				& =  (0 \to a) \to (b \to c) \qquad \qquad \mbox{by (I)}.
				\end{align*}

		\item[(\ref{181114_07})]
		Observe that
		\begin{equation} \label{181114_09}
			[(c \to 0') \to a] \to a' = (0 \to c) \to a'
		\end{equation}
		since
		$$
		\begin{array}{lcll}
		[(c \to 0') \to a] \to a'  & = & [(c \to 0') \to 0'] \to a' & \mbox{by (\ref{281014_05})} \\
		& = & [(c \to 0) \to 0'] \to a' & \mbox{by (\ref{281014_05})} \\
		& = &  [c' \to 0'] \to a' & \mbox{by definition of } ' \\
		& = & (0 \to c) \to a'. & \mbox{by Lemma \ref{general_properties} (\ref{cuasiConmutativeOfImplic2})}.
		\end{array}
		$$
		Hence
                \begin{align*}  
		(a' \to b) \to [(0 \to c) \to a'] & =  [\{((c \to 0') \to a) \to a'\} \to \{b \to ((0 \to c) \to a')\}']'  \\
		& \qquad \qquad  \qquad \mbox{by (\ref{181114_05})}  
		    \mbox{  with } x = c, y = a, z = a', u = b & \\
		& =  [\{(0 \to c) \to a'\} \to \{b \to ((0 \to c) \to a')\}']' \\
		& \qquad \qquad \qquad \qquad \mbox{by (\ref{181114_09})} \\
		& =  [\{(0 \to c) \to a'\} \to b] \to \{(0 \to c) \to a'\} \quad \mbox{by (\ref{291014_09})} \\
		& =  (0 \to b) \to [(0 \to c) \to a']  \quad \mbox{by (\ref{291014_10})}. 
		\end{align*}

		\item[(\ref{181114_10})]
		
		This is immediate from (\ref{181114_04}) and (\ref{181114_07}).\\

\ \\

                  \item[(\ref{080415_02})]

		\begin{align*}    
		(a' \to b) \to (a \to b')	& =  [b' \to (a' \to b)] \to (a \to b')  \qquad \qquad  \qquad \mbox{by (\ref{281114_01})} \\
		& =  [0 \to (a' \to b)] \to (a \to b')  \qquad \qquad  \qquad \mbox{by (\ref{080415_01})} \\
		& =  [a' \to (0 \to b)] \to (a \to b')   \qquad \qquad  \qquad \mbox{by (\ref{071114_04})} \\
		& =  [a' \to (0 \to b'')] \to (a \to b') & \mbox{} \\
		& =  [0 \to (a'' \to b')'] \to (a \to b')  \qquad \qquad  \qquad \mbox{by (\ref{071114_01})} \\
		& =  [0 \to (a \to b')'] \to (a \to b') & \mbox{} \\
		& =  [(a \to b') \to (a \to b')'] \to (a \to b')  \qquad \qquad  \qquad \mbox{by  (\ref{291014_10})} \\
		& =  [(a \to b')'' \to (a \to b')'] \to (a \to b') & \mbox{} \\
		& =  (a \to b')' \to (a \to b') 
		 \qquad \qquad  \qquad \mbox{by Lemma \ref{general_properties_equiv} (\ref{LeftImplicationwithtilde})} \\
		& =  a \to b' 
		 \qquad \qquad  \qquad   \mbox{by Lemma \ref{general_properties_equiv} (\ref{LeftImplicationwithtilde})}.
		\end{align*}

                 \item[(\ref{080415_03})]
		\begin{align*}    
		(a \to b')' \to (a' \to b)' & =  [\{(a' \to b)'' \to (a \to b')\} \to \{0 \to (a' \to b)'\}']' \\
		& \qquad \mbox{by (I)} \\
		& =  [\{(a' \to b) \to (a \to b')\} \to \{0 \to (a' \to b)'\}']'  \mbox{} \\
		& =  [\{(a' \to b) \to (a \to b')\} \to \{a \to (0 \to b')\}']' \qquad   \mbox{by (\ref{071114_01})} \\
		& =  [\{(a' \to b) \to (a \to b')\} \to \{0 \to (a \to b')\}']'  \qquad \mbox{by  (\ref{071114_04})} \\
		& =  [(a' \to b) \to (a \to b')]''  \qquad  \mbox{by (\ref{291014_07})} \\
		& =  (a' \to b) \to (a \to b') & \mbox{} \\
		& =  a \to b'  \qquad \qquad  \mbox{by (\ref{080415_02})}. \qedhere
		\end{align*} 
	\end{itemize} 
\end{proof}

\ \\

\end{document}